	\def\ps@pprintTitle{%
 	\let\@oddhead\@empty
	\let\@evenhead\@empty
	\def\@oddfoot{\centerline{\thepage}}%
	\let\@evenfoot\@oddfoot}
\patchcmd{\MaketitleBox}{\footnotesize\itshape\elsaddress\par\vskip36pt}{\footnotesize\itshape\elsaddress\par\parbox[b][36pt]{\linewidth}{\vfill\hfill\textnormal{\today}\hfill\null\vfill}}{}{}%
\patchcmd{\pprintMaketitle}{\footnotesize\itshape\elsaddress\par\vskip36pt}{\footnotesize\itshape\elsaddress\par\parbox[b][36pt]{\linewidth}{\vfill\hfill\textnormal{\today}\hfill\null\vfill}}{}{}%
\newtheoremstyle{mytheoremstyle}{5pt}{5pt}{\itshape}{}{\bfseries}{.}{.5em}{} 
\theoremstyle{mytheoremstyle}
\newtheoremstyle{myremarkstyle}{3pt}{3pt}{\itshape}{}{\bfseries}{.}{.5em}{} 
\theoremstyle{myremarkstyle}
\newcommand{\cmark}{\ding{51}}%
\newcommand{\xmark}{\ding{55}}%
\newtheorem{theorem}{Theorem}
\newtheorem{example}{Example}
\newtheorem{remark}{Remark}
\newtheorem{proposition}{Proposition}
\newtheorem{definition}{Definition}
\renewenvironment{proof}%
{\noindent {\textbf{Proof}:} }%
{\hfill $\Box$ \\ }
\newcommand{\bit}{\begin{itemize}}
\newcommand{\eit}{\end{itemize}}
\newcommand{\ben}{\begin{enumerate}}
\newcommand{\een}{\end{enumerate}}
\newcommand{\Hquad}{\hspace{0.5em}} 
\begin{document}
\begin{frontmatter}
    \title{Anti-symmetric and Positivity Preserving Formulation of a Spectral Method for Vlasov-Poisson Equations}
    \author[ucsd,lanl]{Opal Issan\corref{cor1}}\ead{oissan@ucsd.edu}
    \author[lanl]{Oleksandr Koshkarov}
    \author[ga]{Federico D. Halpern}
    \author[ucsd]{Boris Kramer}
    \author[lanl]{Gian Luca Delzanno}
    
    \cortext[cor1]{Corresponding author}
    \address[ucsd]{Department of Mechanical and Aerospace Engineering, University of California San Diego, La Jolla, CA, USA}
    \address[lanl]{T-5 Applied Mathematics and Plasma Physics Group, Los Alamos National Laboratory, Los Alamos, NM, USA}
    \address[ga]{General Atomics, P.O. Box 85608, San Diego, CA, USA}
    
    \begin{abstract}
     We analyze the anti-symmetric properties of a spectral discretization for the one-dimensional Vlasov-Poisson equations. The discretization is based on a spectral expansion in velocity with the symmetrically weighted Hermite basis functions, central finite differencing in space, and an implicit Runge Kutta integrator in time. The proposed discretization preserves the anti-symmetric structure of the advection operator in the Vlasov equation, resulting in a stable numerical method. We apply such discretization to two formulations: the canonical Vlasov-Poisson equations and their continuously transformed square-root representation. The latter preserves the positivity of the particle distribution function. We derive analytically the conservation properties of both formulations, including particle number, momentum, and energy, which are verified numerically on the following benchmark problems: manufactured solution, linear and nonlinear Landau damping, two-stream instability, bump-on-tail instability, and ion-acoustic wave. 
    \end{abstract}	
    \begin{keyword}
    Vlasov-Poisson equations \sep spectral methods \sep conservation laws \sep anti-symmetry 
    \end{keyword}
\end{frontmatter}
\section{Introduction} \label{sec:intro}
Many plasma physics phenomena require a kinetic description, rather than a fluid description, to accurately represent the system's dynamics. Important examples include Earth's magnetosphere~\cite{palmroth_2023_magnetotail, jordanova_2018_shields}, solar corona~\cite{marsch_2006_kinetic, cranmer_2003_corona}, galactic dynamics~\cite{binney_2011_galactic}, and laboratory fusion devices~\cite{chen_1984_fusion, freidberg_2008_plasma}. The kinetic Vlasov equation describes the evolution of the particle (e.g., ions, electrons, etc) distribution function in six-dimensional phase space (three spatial and three velocity coordinates) coupled to self-consistent electromagnetic fields.

Numerically solving the Vlasov equation for collisionless plasmas is challenging since it is high dimensional, nonlinearly coupled to the Maxwell or Poisson equations, has a rich geometric structure, and has large spatial and temporal scale separation. 
The scale separation occurs in the microscopic setting since ions are typically at least three orders of magnitude heavier than electrons, and in the macroscopic setting since the system's scales are much larger than the local plasma scales. 
For instance, in the solar corona the Debye length, a kinetic scale describing the distance at which the plasma shields external charges, is of order $10^{-4}-10^{1}\unit{\m}$, and the ion Larmor radius, a kinetic scale describing the radius at which the ions gyrate around magnetic field lines, is of order $10^{1}-10^7\unit{\m}$, whereas the solar corona's system length scale is approximately $10^{11}\unit{\m}$~\cite[\S 2.4]{palmroth_2018_vlasiator}. 
Moreover, the Vlasov equation has infinitely many invariants due to its Hamiltonian structure. Retaining such conservation properties at the fully discrete level is an important yet nontrivial task. 
Numerically handling the high-dimensional phase space of the Vlasov equation is also an ongoing challenge. Standard grid discretization, such as finite differencing, of the six-dimensional phase space results in a prohibitively large number of degrees of freedom. For example, discretizing each direction in phase space using $10^{3}$ grid points results in $10^{18}$ degrees of freedom in each time step, which is equivalent to 8 exabytes of memory in double precision. At the time of this writing, the world's fastest supercomputer (Frontier) has 9.2 petabytes (0.0092 exabytes) of memory~\cite{dongarra_2022_frontier}.

Two approaches for solving the kinetic equations are prevalent: Lagrangian (particle-based) and Eulerian (grid or spectral-based). 
The first approach discretizes phase space via macroparticles and follows their characteristics. Particle-based methods are the most commonly used methods in the community due to their simplicity, robustness, and conservation properties~\cite{kraus_2017_gempic, kormann_jcp_2021, stanier_jcp_2019, guo_jcp_2022}. However, particle-based methods introduce statistical errors commonly referred to as `discrete particle noise' due to random sampling of the particle distribution function and its interpolation to the grid. The most straightforward way to reduce the noise is to increase the number of macroparticles, in which the noise decreases only as $1/\sqrt{N}$, where $N$ is the number of macroparticles~\cite{verboncoeur_2005_pic, camporeale_sps_2016}. Other denoising strategies include filtering~\cite{verboncoeur_2005_pic}, variance reduction techniques~\cite{denton_1995_pic}, and phase space remapping~\cite{wang_2011_pic, myers_2017_pic}. This renders particle-based methods more suitable for problems where a low signal-to-noise ratio is acceptable. 

The second approach is to solve the Vlasov equation in an Eulerian coordinate frame using a grid or spectral-based discretization. Eulerian methods do not have particle noise and therefore it is easier to resolve fine-scale structures. Grid-based methods have been successful typically in reduced dimensions, with a variety of discretization techniques such as finite difference~\cite{carrie_2022_central, takashi_2019_fd, banks_siam_2019}, finite volume~\cite{filbert_2001_fv, vogman_2018, banks_ieee_2010}, finite element~\cite{zaki_1988_fe2, zaki_1988_fe1}, and discontinuous Galerkin~\cite{heath_2012_dg, cheng_2014_dg, juno_2018_dg, datta_jcp_2023}.  
Alternatively, spectral methods expand the particle distribution function in velocity using an orthogonal basis. This can be achieved using different basis sets, including Fourier~\cite{klimas_1994_filter}, Chebyshev~\cite{shoucri_1974_chebyshev}, and Legendre~\cite{manzini_2016_legendre} expansions, but most of the spectral literature for the Vlasov equation use a Hermite spectral expansion in velocity space~\cite{roytershteyn_2018_sps, koshkarov_2021_sps, camporeale_sps_2016, vencels_2015_sps, vencels_2016_sps, delzanno_2015_sps, holloway_1996_sw, pagliantini_2023_adaptive, pagliantini_2023_time_integration, francis_2020_dg_hermite, pezzi_2016_recurrence, chatard_2022, kormann_2021_sw, parker_2015_filtering}.

Hermite-based velocity expansions originally proposed by~\cite{grad_1949_sw} are particularly advantageous since they can capture near-Maxwellian distribution functions with a few basis functions, enabling 3D-3V simulations~\cite{delzanno_2015_sps, vencels_2016_sps, roytershteyn_2018_sps}. This is because the Hermite basis is defined by the Hermite polynomials with a Maxwellian weight. Therefore, an exact Maxwellian distribution can be represented by the zeroth-order Hermite basis function. There are two types of Hermite expansions: \textit{asymmetrically weighted}~(AW) and \textit{symmetrically weighted}~(SW). The two expansions have different properties. Although the AW expansion conserves particle number, momentum, and energy, it suffers from numerical instability~\cite{camporeale_sps_2016}. Conversely, the SW expansion is stable, yet conservation of particle number, momentum, and energy is limited~\cite{manzini_2017_sw, holloway_1996_sw, kormann_2021_sw}. Additionally, both expansions do not enforce the positivity of the particle distribution function, due to the nature of working with an orthogonal basis.

An important property of the Vlasov equation is the anti-symmetric (also known as skew-adjoint, anti-self-adjoint, or skew-symmetry) structure of the phase space advection operator. Since an anti-symmetric operator conserves square norms, a discretization that preserves the anti-symmetric structure of the advection operator in the Vlasov equation is guaranteed to be numerically stable.
The exploitation of this concept dates back to a classic paper by Arakawa~\cite{arakawa_1966_incompressible}, which introduced the first numerically stable 2D finite-difference method for non-linear incompressible fluid flows. Numerical stability was achieved by introducing a linear combination of different representations of the advection operator with different numerical properties. This allows retaining some of the symmetries of the dynamics, such as the anti-symmetry of the flow operator and two square quantities -- kinetic energy and enstrophy -- to be conserved in discrete space, resulting in a numerically stable method. 
Recently, Halpern et al.~\cite{halpern_2018_antisymmetric, halpern_2020_antisymmetric, halpern_2021_antisymmetric} formulated an anti-symmetric representation of the plasma and fluid equations, ranging from the two-fluid Braginskii model down to the Navier-Stokes model. This formulation first transforms the continuous equations by introducing a novel set of dependent variables in which the continuous force operator becomes anti-symmetric. The new dependent variables include the square root of the density, similar to Roe variables for the Euler equations~\cite{roe_1981_sqrt}, which by construction is positivity preserving in the discrete setting. The spatial discretization of the transformed equations is based on central finite differencing, which results in a discrete anti-symmetric force operator, leading to a stable, robust, and conservative method. The numerical stability and conservation properties are guaranteed in the anti-symmetric formulation by drawing connections between the continuous and discrete forms, instead of through direct discretization of the Hamiltonian function. 

This paper has four main contributions.
First, we show that the SW expansion of the particle distribution function preserves the anti-symmetric structure of the advection operator, or equivalently the canonical Poisson bracket, in the Vlasov equation. 
Second, we show that central finite differencing in space can preserve the anti-symmetric structure and result in a discrete anti-symmetric advection operator acting on the expansion coefficients. Preserving the anti-symmetry of the advection operator results in a structure-preserving and stable method. 
Additionally, central finite differencing scales well in high-dimensional parallel code due to its sparsity and allows for more flexibility with non-periodic boundary conditions, unlike the previously proposed Fourier spectral expansion.
Third, we propose a new SW square-root formulation, which solves for the square root of the particle distribution function, with the same SW velocity expansion and central finite differencing spatial discretization. By construction, the SW square-root formulation is positivity preserving and retains the numerical stability and anti-symmetry of the SW formulation.  
Fourth, we derive the conservation properties of the SW and SW square-root formulations for the one-dimensional Vlasov-Poisson equations. We verify numerically such conservation properties on the following benchmark problems: manufactured solution, linear and nonlinear Landau damping, two-stream instability, bump-on-tail instability, and ion-acoustic wave. 

This paper is organized as follows. Section~\ref{sec:vp-hermite-sec-2} introduces the one-dimensional Vlasov-Poisson equations and the velocity discretization for the SW and SW square-root formulations. Section~\ref{sec:anti-symmetry-sec-3} presents the proposed anti-symmetric discretization of the Vlasov equation, leading to a stable method. Section~\ref{sec:conservation-properties} derives the conservation properties of the SW and SW square-root formulations. In section~\ref{sec:numerical-results}, we test the two formulations on a number of classical benchmark problems. Lastly, section~\ref{sec:conclusions} offers conclusions and an outlook for future work.

\section{Vlasov-Poisson Equations: Hermite Spectral Discretization in Velocity}\label{sec:vp-hermite-sec-2}
We present the Vlasov-Poisson equations in section~\ref{sec:vlasov-poisson-equations}. Section~\ref{sec:Hermite} presents the SW Hermite basis functions. We introduce the SW  expansion in section~\ref{sec:SW} and the SW square-root expansion in section~\ref{sec:SW-sqrt}.

\subsection{Vlasov-Poisson Equations}\label{sec:vlasov-poisson-equations}
We study the one-dimensional Vlasov-Poisson equations which model the coupling between collisionless plasmas and a self-consistent electric field.  
The plasma is described by the non-negative and bounded particle distribution function $f^{s}(x, v, t)$ in phase-space, which represents the number of particles located at the spatial position between $x \in \Omega_{x}$ and $x + \mathrm{d}x \in \Omega_{x}$ with velocity between $v \in \Omega_{v}$ and $v +\mathrm{d} v\in \Omega_{v}$ at a given time $t\in \Omega_{t} \subseteq \mathbb{R}_{\geq 0}$. 
The plasma species are denoted by $s$, where $s=`e$' denotes electrons and $s=`i$' denotes singly-charged ions. 
We assume the spatial coordinate $x$ is periodic in the spatial domain $\Omega_{x} = [0, \ell]$, where $\ell$ is the length of the spatial domain, $\Omega_{t} = [0, t_{f}]$, where $t_{f} \in \mathbb{R}_{\geq 0}$ is the final time, and allow the velocity coordinate to be unbounded, i.e. $\Omega_{v} = \mathbb{R}$.
Let $L^{2}(\Omega_{x} \times \Omega_{v})$ denote the real Hilbert space of square-integrable functions on $\Omega_{x} \times \Omega_{v}$ with inner product $\langle f, g\rangle = \int_{\Omega_{x}}\int_{\Omega_{v}}f(x, v, t) g(x, v, t) \mathrm{d} x\mathrm{d}v$. We next introduce the spaces of functions that are periodic in space $x$ and tend to zero as $|v|\to \infty$ as
\begin{align*}
    \mathcal{V}^d \coloneqq & \{ [f_1, \ldots, f_d]^\top(\cdot, \cdot, t)\ \text{s.t.} \ f_i(\cdot, \cdot, t) \in L^{2}(\Omega_{x} \times \Omega_{v}) \  \text{and} \ f_i\geq 0,  \\
    &\Hquad f_i(0, \cdot, t) = f_i(\ell, \cdot, t), \Hquad f_i(\cdot, v, t) \to 0 \Hquad \mathrm{as} \Hquad |v| \to \infty, \ i=1, \ldots, d, \Hquad \forall t \in \Omega_{t}\},\\
    \mathcal{W}^d \coloneqq & \{ [g_1, \ldots, g_d]^\top(\cdot,  t) \ \text{s.t.} \   g_i(\cdot, t) \in L^{2}(\Omega_{x}) \ \text{and} \  g_i(0, t) = g_i(\ell, t), \ i=1, \ldots, d, \Hquad \forall t \in \Omega_{t} \},
\end{align*}
where we abbreviate $\mathcal{V} = \mathcal{V}^1$ and $\mathcal{W} = \mathcal{W}^1$. 
The one-dimensional~(normalized) Vlasov-Poisson equations are given by 
\begin{alignat}{3}
    \left(\frac{\partial}{\partial t} + v \frac{\partial}{\partial x}+ \frac{q^{s}}{m^{s}} E(x, t) \frac{\partial}{\partial v}\right) f^{s}(x, v, t)  &= 0,  \qquad &&\text{in } \Omega_{x} \times \Omega_{v} \times \Omega_{t},\Hquad \forall s, \label{vlasov-continuum}\\
    \frac{\partial E(x, t)}{\partial x} &= \rho(x, t)  \qquad &&\text{in } \Omega_{x} \times \Omega_{t}, \label{poisson-continuum}
\end{alignat}
where $\rho(x, t) \coloneqq \sum_{s} q^{s} \int_{\Omega_{v}} f^{s}(x,v, t) \mathrm{d}v$ is the normalized charge density, $E \in \mathcal{W}$ is the normalized electric field, and $m^{s}$ and $q^{s}$ are the normalized mass and charge of species $s$.  
Another macroscopic quantity that does not show up explicitly in the governing equations but will be used in the following sections is the normalized electric current $J(x, t) \coloneqq \sum_{s} q^{s} \int_{\Omega_{v}} v f^{s}(x, v, t) \mathrm{d} v$.
The uniqueness of the solution is ensured by imposing that $\int_{\Omega_{x}}E(x, t) \mathrm{d} x = 0, \Hquad \forall t \in \Omega_{t}$.
To complete the problem formulation, we set a suitable initial condition to the particle distribution function $f^{s}(x, v, t=0) = f^{s}_{0}(x, v)$ for each of the species $s$, and compute the initial electric field $E(x, t=0)$ by solving the Poisson equation~\eqref{poisson-continuum} at $t=0$.
\paragraph{Normalization} In the above, we normalized the physical quantities as follows. Time $t$ is normalized to the electron plasma frequency $\omega_{pe} = \sqrt{(q^{e})^2 n_{0}^{e}/(\varepsilon_{0}m^{e})}$, where $q^{e}$ is the elementary charge, $m^{e}$ is the electron mass, $\varepsilon_{0}$ is the permittivity of vacuum, and $n_{0}^{e}$ is the reference electron density. The velocity coordinate $v$ is normalized to the electron thermal velocity $v_{t} = \sqrt{k_{B} T^{e}/m^{e}}$, where $k_{B}$ is the Boltzmann constant and $T^{e}$ is a reference electron temperature; the spatial coordinate $x$ is normalized to the electron Debye length $\lambda_{D} = \sqrt{\varepsilon_{0} k_{B} T^{e}/((q^{e})^{2}n_{0}^{e})}$; the electric field $E(x, t)$ is normalized to $m^{e} v_{t} \omega_{pe}/q^{e}$; the particle distribution function $f^{s}(x, v, t)$ is normalized to $n^{e}_{0}/v_{t}$; the mass and charge of species $s$, i.e. $m^{s}$ and $q^{s}$, are normalized by electron mass $m^{e}$ and charge $q^{e}$.

\subsection{Symmetrically-Weighted Hermite Basis Functions and Their Properties}\label{sec:Hermite}
We introduce the \textit{symmetrically-weighted}~(SW) Hermite basis functions~$\{\psi_{n}(\xi^{s})\}_{n \in \mathbb{N}_{0}}$, which are a function of the scaled velocity coordinate~$\xi^{s}$ and are defined as
\begin{equation}\label{hermite-basis-function-definition}
    \psi_{n}(\xi^{s}) \coloneqq (\sqrt{\pi} 2^n n!)^{-\frac{1}{2}} \mathcal{H}_{n}(\xi^{s}) \exp{\left(-\frac{(\xi^{s})^2}{2}\right)}, \qquad \xi^{s}(v) \coloneqq \frac{v-u^{s}}{\alpha^{s}}, 
\end{equation}
where the velocity coordinate $v$ is shifted by $u^{s}\in \mathbb{R}$ and scaled by $\alpha^{s} \in \mathbb{R}_{>0}$~\cite{tao_1993_hermite}. Here, we treat parameters $u^{s}$ and $\alpha^{s}$ as constants. We briefly discuss the adaptive (non-constant) approach in section~\ref{sec:hermite-adaptivity}.
The SW Hermite parameter $u^{s}$ and $\alpha^{s}$ can be interpreted as the characteristic mean flow and thermal velocity of each species $s$ due to the Hermite basis $\psi_{n}(\xi^{s})$ resemblance of a Maxwellian distribution.
Moreover, $\mathcal{H}_{n}(\xi^{s})$ is the ``\textit{physicist}'' Hermite polynomial~\cite[\S 22]{abramowitz_1964_math} of degree $n \in \mathbb{N}_{0}$, i.e.
\begin{equation}\label{hermite-polynomials}
    \mathcal{H}_{n}(\xi^{s}) \coloneqq (-1)^n \exp{\left((\xi^{s})^2\right)} \frac{\mathrm{d}^{n}}{\mathrm{d}(\xi^{s})^n} \exp{\left((-\xi^{s})^2\right)}, 
\end{equation}
with the following recursive properties 
\begin{alignat}{3}
    \frac{\mathrm{d}\mathcal{H}_{n}(\xi^{s})}{\mathrm{d}\xi^s}  &= 2n \mathcal{H}_{n-1}(\xi^s) \qquad &&\mathrm{for} \qquad n\geq 1, \label{hermite-recursion-derivative}\\
    \xi^{s}\mathcal{H}_{n}(\xi^s) &= \frac{1}{2} \mathcal{H}_{n+1}(\xi^s) + n \mathcal{H}_{n-1}(\xi^s) \qquad &&\mathrm{for} \qquad n \geq 2.\label{hermite-recursion} 
\end{alignat}
The SW-Hermite basis functions satisfy the orthogonality relation
\begin{align}\label{orthogonal-property}
    \int_{\mathbb{R}} \psi_{n}(\xi^{s}) \psi_{m}(\xi^{s}) \mathrm{d} \xi^{s} = \delta_{n,m},
\end{align}
where $\delta_{n,m}$ is the Kronecker delta function. Additional properties of the SW-Hermite basis functions that we leverage later include
\begin{align}
    v\psi_{n}(\xi^{s}) &= \alpha^{s} \sqrt{\frac{n+1}{{2}}} \psi_{n+1}(\xi^{s}) + \alpha^{s} \sqrt{\frac{n}{2}} \psi_{n-1}(\xi^{s}) + u^{s} \psi_{n}(\xi^{s}), \label{identity-1}\\
    v^2\psi_{n}(\xi^{s}) &= (\alpha^{s})^2 \frac{\sqrt{(n+1)(n+2)}}{2} \psi_{n+2}(\xi^{s}) + 2\alpha^{s}u^{s}\sqrt{\frac{n+1}{2}}\psi_{n+1}(\xi^s) \label{identity-2}\\
    &+\left((\alpha^{s})^2 \left(\frac{2n+1}{2}\right) + (u^s)^2\right) \psi_{n}(\xi^{s}) \nonumber\\
    &+ 2\alpha^{s} u^s \sqrt{\frac{n}{2}} \psi_{n-1}(\xi^s) + (\alpha^s)^2 \frac{\sqrt{n(n-1)}}{2} \psi_{n-2}(\xi^s), \nonumber\\
    \frac{\mathrm{d} \psi_{n}(\xi^{s})}{\mathrm{d} v} &=  \frac{1}{\alpha^{s}}\sqrt{\frac{n}{2}} \psi_{n-1}(\xi^{s}) - \frac{1}{\alpha^{s}}\sqrt{\frac{n+1}{2}}\psi_{n+1}(\xi^{s}).\label{identity-3}
\end{align}
Identity~\eqref{identity-1} follows from the recursive relation in Eq.~\eqref{hermite-recursion}, identity~\eqref{identity-2} follows from applying the recursive formula in Eq.~\eqref{hermite-recursion} twice, and lastly, identity~\eqref{identity-3} follows from the recursive relation in Eq.~\eqref{hermite-recursion-derivative}.

\subsection{Symmetrically-Weighted Expansion}\label{sec:SW}
We approximate the particle distribution function $f^{s}(x, v, t)$ via a truncated spectral decomposition in the scaled velocity coordinate~$\xi^{s}$, taking the following ansatz
\begin{equation}\label{expansion-seperation-of-variables}
    f^{s}(x, v,t) \approx f^{s, N_{v}}(x, v, t) = \sum_{n=0}^{N_{v}-1} C_{n}^{s}(x, t) \psi_{n}(\xi^{s}), 
\end{equation}
where $\psi_{n}(\xi^{s})$ is the SW-Hermite basis function defined in Eq.~\eqref{hermite-basis-function-definition} and $C_{n}^{s} \in \mathcal{W}$ is the expansion coefficient. The Hermite basis is a natural choice for Maxwellian-type distributions because the Hermite basis is defined by the Hermite polynomials with a Maxwellian weight, see Eq.~\eqref{hermite-basis-function-definition}. Additionally, it is advantageous to consider the Hermite basis for approximating solutions of differential equations in unbounded domains to avoid artificial boundary conditions. 
By substituting the spectral approximation in Eq.~\eqref{expansion-seperation-of-variables} in the Vlasov equation~\eqref{vlasov-continuum}, multiplying against~$\psi_{m}(\xi^{s})$, and integrating with respect to~$\xi^{s}$, we get
\begin{equation*}
    \int_{\mathbb{R}} \psi_{m}(\xi^{s}) \left[ \sum_{n=0}^{N_{v}-1} \frac{\partial C_{n}^{s}(x, t)}{\partial t} \psi_{n}(\xi^{s}) + v \sum_{n=0}^{N_{v}-1} \frac{\partial C_{n}^{s}(x, t)}{\partial x} \psi_{n}(\xi^{s}) +\frac{q^{s}}{m^{s}} E(x, t)\sum_{n=0}^{N_{v}-1} C_{n}^{s}(x, t) \frac{\mathrm{d} \psi_{n}(\xi^{s})}{\mathrm{d} v} \right]\mathrm{d}\xi^{s}= 0.
\end{equation*}
By exploiting the orthogonality and recurrence relation of the Hermite basis functions in Eqns.~\eqref{orthogonal-property},~\eqref{identity-1} and~\eqref{identity-3}, we derive a system of partial differential equations (PDEs) for the expansion coefficients $C_{n}^{s}(x, t)$:
\begin{align}\label{f-vlasov-pde}
    \frac{\partial C_{n}^{s}(x, t)}{\partial t} &+ \frac{\partial }{\partial x}\underbrace{\left(\alpha^{s} \sqrt{\frac{n}{{2}}} C_{n-1}^{s}(x, t) + \alpha^{s} \sqrt{\frac{n+1}{2}} C_{n+1}^{s}(x, t) + u^{s}  C_{n}^{s}(x, t)\right)}_{\coloneqq Q^{s}_{n}(x, t)}\\
    &+\frac{q^{s}}{m^{s}\alpha^{s}} 
 E(x,t)\left(-\sqrt{\frac{n}{2}}C_{n-1}^{s}(x, t) + \sqrt{\frac{n+1}{2}} C_{n+1}^{s}(x, t)\right)= 0, \nonumber
\end{align}
for $n= 0, 1, \ldots, N_{v}-1$ with the convention that $C_{n<0}^{s}(x, t) = 0$ and imposing for the closure of the system $C_{N_{v}}^{s}(x, t) = 0$, which is commonly known as \textit{closure by truncation}. Similarly, after substituting the spectral approximation in Eq.~\eqref{expansion-seperation-of-variables} in the Poisson equation~\eqref{poisson-continuum}, we get
\begin{equation}\label{f-poisson-pde}
    \frac{\partial E(x, t)}{\partial x} = \sum_{s} q^{s} \alpha^{s} \sum_{n=0}^{N_{v}-1} \mathcal{I}_{n} C_{n}^{s}(x, t)\qquad \mathrm{with} \qquad \mathcal{I}_{n} = \int_{\mathbb{R}} \psi_{n}(\xi^{s}) \mathrm{d} \xi^{s}.
\end{equation}
The integral $\mathcal{I}_{n}$ has the following recursive property:
\begin{equation}\label{recursive-relation-I0}
    \mathcal{I}_{0} = \sqrt{2} (\pi)^{\frac{1}{4}}, \qquad \mathcal{I}_{1} = 0, \qquad \mathcal{I}_{n} = \sqrt{\frac{n-1}{n}} \mathcal{I}_{n-2} \qquad \mathrm{for} \qquad n \geq 2. 
\end{equation}
%
\subsection{Symmetrically-Weighted Square-Root Expansion}\label{sec:SW-sqrt}
To preserve the non-negative property of the particle distribution function $f^{s}(x, v, t) \geq 0$, we approximate the square root of the particle distribution function, i.e. $\sqrt{f^{s}(x, v, t)}$, via an SW-Hermite spectral expansion in velocity $v$, where
\begin{equation}\label{spectral-approximation-sqrt}
    \sqrt{f^{s}(x, v, t)} \approx \sqrt{f^{s, N_{v}}(x, v, t)} = \sum_{n=0}^{N_{v}-1} C_{n}^{s}(x, t) \psi_{n}(\xi^{s}).
\end{equation}
In general, the approach of solving for the square root of the particle distribution function can be applied to a grid-based kinetic solver with any discretization in the phase space.
\begin{proposition}\label{prop:sqrt-formulation} 
The Vlasov equation~\eqref{vlasov-continuum} in square-root form is
\begin{equation}\label{vlasov-sqrt-continuum}
    \left(\frac{\partial}{\partial t} + v \frac{\partial}{\partial x}+ \frac{q^{s}}{m^{s}} E(x, t) \frac{\partial}{\partial v}\right) \sqrt{f^{s}(x, v, t)} = 0. 
\end{equation}
A solution to Eq.~\eqref{vlasov-sqrt-continuum} is a solution to the Vlasov equation~\eqref{vlasov-continuum} and vice versa.
\end{proposition}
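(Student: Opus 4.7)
The plan is to use the chain rule to pass back and forth between $f^s$ and $\sqrt{f^s}$. Since $f^s \geq 0$ by assumption on the admissible function space $\cV$, taking $g^s \coloneqq \sqrt{f^s}$ is well-defined. The strategy is purely formal/algebraic: I will show that the advection operator $\partial_t + v\partial_x + (q^s/m^s)E\partial_v$ is a first-order linear differential operator that is compatible with the substitution $f^s = (g^s)^2$.

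First, I would prove the forward direction. Assuming $f^s$ solves Eq.~\eqref{vlasov-continuum} and is sufficiently smooth with $f^s > 0$, the chain rule gives $\partial_t f^s = 2 g^s \, \partial_t g^s$ and likewise for $\partial_x$ and $\partial_v$. Substituting into Eq.~\eqref{vlasov-continuum} and factoring out $2 g^s$ yields
\begin{equation*}
2 g^s \left( \frac{\partial}{\partial t} + v \frac{\partial}{\partial x} + \frac{q^s}{m^s} E(x,t) \frac{\partial}{\partial v} \right) g^s = 0.
\end{equation*}
Dividing by $2 g^s$ on the set where $g^s > 0$ recovers Eq.~\eqref{vlasov-sqrt-continuum}. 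Second, for the reverse direction, I would multiply Eq.~\eqref{vlasov-sqrt-continuum} by $2 g^s$ and recognize each product $2 g^s \partial_\star g^s$ as $\partial_\star (g^s)^2 = \partial_\star f^s$, thereby recovering Eq.~\eqref{vlasov-continuum}. The electric field $E$ does not depend on $v$, and the advection operator is linear of first order, so no extra cross-terms arise; this is exactly why the argument works cleanly.

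The main obstacle is regularity at points where $f^s$ vanishes, since $\sqrt{f^s}$ is generally not differentiable there (its velocity derivative can blow up like $1/\sqrt{f^s}$). I would handle this by either (i) restricting the statement to the open set where $f^s > 0$ and interpreting Eq.~\eqref{vlasov-sqrt-continuum} in the a.e.\ sense elsewhere, or (ii) noting that in the spectral approximation~\eqref{spectral-approximation-sqrt} one directly represents $\sqrt{f^s}$ as a smooth Hermite series, so the governing PDE for that smooth quantity is what is actually being solved and the equivalence is used only formally. Since the ansatz \eqref{spectral-approximation-sqrt} is the operational object of the paper, only the formal chain-rule equivalence is needed, and the regularity subtlety can be flagged as a standing smoothness hypothesis rather than an obstacle to the proof itself.
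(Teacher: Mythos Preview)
Your proposal is correct and follows essentially the same route as the paper: apply the chain/product rule $\partial_\star f^s = 2\sqrt{f^s}\,\partial_\star\sqrt{f^s}$ to each derivative, factor out $2\sqrt{f^s}$, and observe that the remaining factor is exactly Eq.~\eqref{vlasov-sqrt-continuum}. The paper handles the zero set more tersely than you do---it simply notes that either $\sqrt{f^s}=0$ or Eq.~\eqref{vlasov-sqrt-continuum} holds, and that the latter already contains the trivial solution---whereas you flag the regularity issue more explicitly; but the underlying argument is the same formal chain-rule equivalence.
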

\begin{proof}
By the product rule and non-negative property of $f^{s}(x, v, t)$, we get
\begin{equation}\label{sqrt-product-rule}
    \frac{\partial f^{s}(x, v, t)}{\partial t} =  \frac{\partial \left(\sqrt{f^{s}(x, v, t)}\right)^2}{\partial t} = 2\sqrt{f^{s}(x, v, t)} \frac{\partial \sqrt{f^{s}(x, v, t)}}{\partial t}.
\end{equation}
Equation~\eqref{sqrt-product-rule} equivalently holds for the partial derivatives of $f^{s}(x, v, t)$ with respect to $x$ and $v$. We recast all partial derivatives in the Vlasov equation~\eqref{vlasov-continuum} using Eq.~\eqref{sqrt-product-rule}, which gives
\begin{equation*}
    2\sqrt{f^{s}(x, v, t)} \left(\frac{\partial}{\partial t} + v \frac{\partial}{\partial x}+ \frac{q^{s}}{m^{s}} E(x, t) \frac{\partial}{\partial v}\right) \sqrt{f^{s}(x, v, t)} = 0.
\end{equation*}
Thus, either $\sqrt{f^{s}(x, v, t)} = 0$  or Eq.~\eqref{vlasov-sqrt-continuum} holds. Since Eq.~\eqref{vlasov-sqrt-continuum} encompasses the trivial solution $\sqrt{f^{s}(x, v, t)} = 0$, the result in Proposition~\ref{prop:sqrt-formulation} is proven.
\end{proof}

By Proposition~\ref{prop:sqrt-formulation} we obtain the same system of PDEs for the expansion coefficients $C_{n}^{s}(x, t)$ in Eq.~\eqref{spectral-approximation-sqrt} as the SW expansion coefficients in Eq.~\eqref{f-vlasov-pde}. Moreover, the square-root spectral approximation~\eqref{spectral-approximation-sqrt} introduces quadratic non-linearity in the Poisson equation~\eqref{poisson-continuum}, such that
\begin{equation}\label{sqrt-f-poisson}
    \frac{\partial E(x, t)}{\partial x} = \sum_{s} q^{s} \alpha^{s} \sum_{n=0}^{N_{v}-1} C_{n}^{s}(x, t)^2.
\end{equation}
The only difference between the SW and SW square-root formulations is the right-hand side of the Poisson equation~\eqref{f-poisson-pde} and \eqref{sqrt-f-poisson}. We derived a closed-form analytic relation between the SW and SW square-root expansion coefficients in~\ref{sec:Appendix-B}.

\section{Anti-symmetric Representation} \label{sec:anti-symmetry-sec-3}
The Vlasov-Poisson system has a rich geometric Hamiltonian structure. Preserving such geometric structure on the discrete level results in accurate long-term numerical simulations of plasma physics processes~\cite{kraus_2017_gempic, kraus_2016_rmhd_variational, xiao_2018_gempic, holloway_1996_hamiltonian}. 
Geometric structure includes conservation laws, such as the conservation of mass, momentum, and energy, symmetries and constraints, such as the particle distribution function being non-negative and bounded, and identities, for example, those from vector calculus, $\text{curl grad} = 0$ and $\text{div curl} = 0$. Recently, Halpern et al.~\cite{halpern_2018_antisymmetric, halpern_2020_antisymmetric, halpern_2021_antisymmetric} developed an anti-symmetric structure-preserving formulation of the Navier-Stokes and magnetohydrodynamics equations leading to simple, robust, and stable simulations.
Here, we draw insight from that work and focus on preserving the anti-symmetric property of the advection operator in the Vlasov equation~\eqref{vlasov-continuum}, thereby preventing nonphysical dissipation. For ease of exposition, we present the anti-symmetric formulation in the 1D-1V setting, however, the anti-symmetric formulation can be extended to multidimensional problems with minor modifications. 

Section~\ref{adjoint-sec:definitions} presents preliminary definitions and notation. In section~\ref{sec:anti-symmetry}, we show that the semi-discrete Vlasov equation preserves the anti-symmetry of the advection operator. We explain the proposed method's stability properties in section~\ref{sec:stability} and its time-reversibility in section~\ref{sec:time-reversibility}. Section~\ref{sec:hamiltonian-poisson-bracket} relates the anti-symmetric properties to the Hamiltonian canonical and noncanonical Poisson brackets. Lastly, section~\ref{sec:filamentation} discusses filamentation mitigation strategies, and section~\ref{sec:hermite-adaptivity} discusses the adaptivity of the Hermite basis parameters. 

\subsection{Definitions and Notation}\label{adjoint-sec:definitions}
We start our discussion by recalling the definition of the \textit{adjoint} of an operator and an \textit{anti-symmetric} operator along with simple examples. 
\begin{definition}[Adjoint of an operator~{\cite[\S 8.5.8]{renardy_2004_introduction}}] \label{def:adjoint}
    Let $H$ be a complex Hilbert space, whose inner product is denoted by $\langle \cdot, \cdot \rangle$. We consider a densely defined operator $(\mathcal{D}(A), A)$ from a Hilbert space $H_{1}$ to a Hilbert space $H_{2}$. The adjoint of the operator $(\mathcal{D}(A), A)$ is defined as the operator $(\mathcal{D}(A^{*}), A^{*})$ from $H_{2}$ to $H_{1}$, such that $\langle Ay, z \rangle = \langle y, A^{*} z\rangle,  \forall y \in H_{1}, \forall z\in H_{2}$. The asterisk denotes the conjugate transpose of the operator; for real operators, the conjugate transpose is the transpose, i.e. $A^{*} = A^{\top}$.
\end{definition}
\begin{example}\label{example-adjoint-discrete}
    We give an example of the computation of an adjoint of a discrete operator. Let $\mathcal{D}(A) = \mathbb{R}^{n}$, $A \in \mathbb{R}^{m \times n}$,  $y \in \mathbb{R}^{n}$, $z \in \mathbb{R}^{m}$, then the adjoint of the discrete operator $(\mathcal{D}(A), A)$ is its transpose $(\mathcal{D}(A^{\top}), A^{\top})$, where $\mathcal{D}(A^{\top}) = \mathbb{R}^{m}$ and $A^{\top} \in \mathbb{R}^{n\times m}$.
\end{example}
\begin{example}\label{example-adjoint-continous}
    We also give an example of the computation of an adjoint of a continuous operator. Let  $H = C^{1}(0, 1)$, the space of continuous functions that have continuous first derivatives on $[0, 1]$, $\mathcal{D}(A) = \{y \in H\Hquad \vert \Hquad y(0) = 0, \Hquad y(1) = 0 \}$, $y, z \in H$, and $A = \frac{\mathrm{d}}{\mathrm{d}x}$. We derive the adjoint of $A$ using integration by parts
    \begin{equation*}
        \langle Ay, z\rangle = \int_{0}^{1} zAy\mathrm{d} x = yz\Big|_{0}^{1} - \int_{0}^{1} y Az \mathrm{d}x = \langle y, -Az \rangle
    \end{equation*}
    such that $(\mathcal{D}(A), A^{\top}) = (\mathcal{D}(A), -A)$.
\end{example}
Example~\ref{example-adjoint-continous} leads to the definition of an anti-symmetric operator. 
\begin{definition}[Anti-symmetric operator~{\cite[\S 2]{halpern_2021_antisymmetric}}]\label{def:anti-symmetric}
The densely defined operator $(\mathcal{D}(A), A)$ from $H$ to $H$ is anti-symmetric{\footnote{also referred to as ``skew-adjoint'', ``anti-self-adjoint'', or ``skew-symmetric'' in literature.}} if $(\mathcal{D}(A), A) = (\mathcal{D}(A^{*}), -A^{*})$, where $(\mathcal{D}(A^{*}), A^{*})$ is the adjoint of the operator $(\mathcal{D}(A), A)$. An anti-symmetric operator $(\mathcal{D}(A), A)$ has the following properties:
\begin{alignat}{3}
    &\text{(anti-commutative) }\qquad\Hquad  \langle Ay, z\rangle &&= -\langle y, Az \rangle, \qquad &&\forall y, z \in H, \label{anti-commutative}\\
    &\text{(preserves square-norms) } \langle Ay, y\rangle &&= 0, \qquad &&\forall y \in H \label{preserves-square-norms}.
\end{alignat}
\end{definition}
%

\subsection{Continuum and Discrete Anti-symmetric Representation of the Vlasov Equation} \label{sec:anti-symmetry}
The advection operator in the Vlasov equation~\eqref{vlasov-continuum} is anti-symmetric (in a periodic domain), which we describe in detail in section~\ref{sec:anti-symmetric-continuum}. 
We prove in section~\ref{sec:anti-symmetric-v} that the anti-symmetric structure of the advection operator is preserved after discretization in velocity using SW-Hermite expansion, and in section~\ref{sec:anti-symmetric-x}, we show that anti-symmetry is preserved after discretization in space using centered finite differencing. 
%

\subsubsection{Anti-symmetric Representation in the Continuum}\label{sec:anti-symmetric-continuum}
We represent the Vlasov equation~\eqref{vlasov-continuum} and its square-root equivalent~\eqref{vlasov-sqrt-continuum} in anti-symmetric form, where the advection operator is anti-symmetric.
\begin{theorem}
The Vlasov equation~\eqref{vlasov-continuum} can be written in an anti-symmetric form as 
\begin{equation}\label{vlasov-continuum-anti-symmetric-form}
    \frac{\partial}{\partial t}f^{s}(x, v, t) = A^{s}(x, v, t) f^{s}(x, v, t)\qquad \text{or} \qquad \frac{\partial}{\partial t}\sqrt{f^{s}(x, v, t)} = A^{s}(x, v, t) \sqrt{f^{s}(x, v, t)},
\end{equation}
where 
\begin{equation}\label{advection-operator-continuum}
    A^{s}(x, v, t) = -\dot{\vec{x}} \cdot \nabla_{\vec{x}} 
\end{equation}
and $\vec{x} = [x, v]^{\top}$, $\dot{\vec{x}} = \frac{\mathrm{d}}{\mathrm{d}t} \vec{x} = [v, \frac{q^{s}}{m^{s}}E(x, t)]^{\top}$, such that the advection operator $(\mathcal{V}, A^{s}(x, v, t))$ is anti-symmetric in a periodic domain. 
\end{theorem}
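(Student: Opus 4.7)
The theorem has two logical pieces. First, rewriting Eq.~\eqref{vlasov-continuum} as $\partial_t f^s = A^s f^s$ is nothing more than transposing two terms, since by construction $A^s = -v\,\partial_x - (q^s/m^s) E\,\partial_v = -\dot{\vec{x}}\cdot\nabla_{\vec{x}}$; the analogous identity for $\sqrt{f^s}$ follows immediately from Proposition~\ref{prop:sqrt-formulation} applied to Eq.~\eqref{vlasov-sqrt-continuum}. Second, and substantively, I must verify that $A^s$ is anti-symmetric on $\mathcal{V}$, i.e. that $\langle A^s y, z\rangle = -\langle y, A^s z\rangle$ for $y, z$ drawn from the underlying Hilbert space in which $\mathcal{V}$ lives.

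To verify this, I would pair $A^s y$ with $z$ in the $L^2(\Omega_x\times\Omega_v)$ inner product, split $A^s$ into its $x$-derivative piece $-v\,\partial_x$ and its $v$-derivative piece $-(q^s/m^s) E\,\partial_v$, and integrate by parts in one variable at a time. Two structural features make the calculation go through without residual terms. First, the coefficient $v$ is independent of $x$ and the coefficient $E(x,t)$ is independent of $v$, so transferring the derivative from $y$ to $z$ never hits the coefficient; equivalently, the phase-space flow is divergence-free, $\nabla_{\vec{x}}\cdot\dot{\vec{x}} = \partial_x v + \partial_v[(q^s/m^s)E(x,t)] = 0$, which is Liouville's theorem for the Vlasov system. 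Second, the boundary contributions vanish: the $x$-boundary term vanishes by the spatial periodicity built into $\mathcal{V}$, and the $v$-boundary term vanishes by the decay $y,z\to 0$ as $|v|\to\infty$ that also defines $\mathcal{V}$. Summing the two contributions yields $\langle A^s y, z\rangle = -\langle y, A^s z\rangle$, which is precisely the anti-commutative property~\eqref{anti-commutative} in Definition~\ref{def:anti-symmetric}; the square-norm identity~\eqref{preserves-square-norms} then follows by setting $z = y$.

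Rather than grinding through the two integrations-by-parts separately, I would actually package the calculation via the vector-calculus identity
\begin{equation*}
(\dot{\vec{x}}\cdot\nabla_{\vec{x}} y)\, z \;=\; \nabla_{\vec{x}}\cdot(\dot{\vec{x}}\, y z) \;-\; y z\,(\nabla_{\vec{x}}\cdot\dot{\vec{x}}) \;-\; y\,(\dot{\vec{x}}\cdot\nabla_{\vec{x}} z),
\end{equation*}
integrate over $\Omega_x\times\Omega_v$, and observe that the first summand integrates to zero by the divergence theorem combined with periodicity in $x$ and decay in $v$, while the second summand vanishes pointwise by incompressibility. This is not a technical obstacle so much as the conceptual core of the theorem: anti-symmetry of the Vlasov advection operator is Liouville's theorem expressed in operator form, and it is precisely this structure that the rest of the paper will exploit to secure stability and conservation. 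Accordingly, in the write-up I would isolate the divergence-free check as a standalone observation before invoking the integration by parts, so that the origin of the anti-symmetry is transparent.
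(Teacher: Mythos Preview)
Your proposal is correct and follows essentially the same route as the paper: both rewrite the Vlasov equation trivially as $\partial_t f^s = A^s f^s$, obtain the square-root form via the product-rule argument of Proposition~\ref{prop:sqrt-formulation}, and establish anti-symmetry by integration by parts (equivalently, the product rule plus the divergence theorem) together with the boundary conditions built into $\mathcal{V}$. Your write-up is in fact more explicit than the paper's terse one-line justification, since you isolate the divergence-free property $\nabla_{\vec{x}}\cdot\dot{\vec{x}}=0$ (Liouville's theorem) as the reason no zeroth-order term survives the integration by parts; the paper leaves this implicit.
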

\begin{proof}
The Vlasov equation~\eqref{vlasov-continuum} can be written as 
\begin{equation}\label{conservative-vlasov-continuum}
    \frac{\partial}{\partial t} f^{s}(x, v, t) =- \dot{\vec{x}} \cdot \nabla_{\vec{x}} f^{s}(x, v, t) = A^{s}(x, v, t) f^{s}(x, v, t),
\end{equation}
which proves the first equality in Eq.~\eqref{vlasov-continuum-anti-symmetric-form}. By the non-negativity of $f^{s}(x, v, t)$, the right hand side of Eq.~\eqref{conservative-vlasov-continuum} can be transformed to 
\begin{equation*}
    -\dot{\vec{x}} \cdot \nabla_{\vec{x}} f^{s}(x, v, t)  = -2\sqrt{f^{s}(x, v, t)} \left(\dot{\vec{x}} \cdot \nabla_{\vec{x}} \sqrt{f^{s}(x, v, t)}\right) = 2\sqrt{f^{s}(x, v, t)} A^{s}(x, v, t) \sqrt{f^{s}(x, v, t)}.
\end{equation*}
The partial derivative of the distribution function in time, i.e. $\frac{\partial f^{s}(x, v, t)}{\partial t}$, can be expressed in terms of its square root, see Eq.~\eqref{sqrt-product-rule}, such that Eq.~\eqref{conservative-vlasov-continuum} becomes the second equality in Eq.~\eqref{vlasov-continuum-anti-symmetric-form}.
The advection operator $\left(\mathcal{V}, A^{s}(x, v, t)\right)$ in Eq.~\eqref{advection-operator-continuum} is anti-symmetric since $\langle A^{s} y, z\rangle = \langle y, -A^{s} z\rangle$ holds for all $y, z\in \mathcal{V}$. The anti-symmetry is obtained by the product rule,  divergence theorem, and periodic boundary conditions. 
Therefore, we get $(\mathcal{V}, A^{s}(x, v, t)) = (\mathcal{V}, -A^{s}(x, v, t)^{\top})$. 
\end{proof}

\subsubsection{Anti-symmetric Representation after Discretization in Velocity}\label{sec:anti-symmetric-v}
We show that after discretization in velocity using the SW-Hermite spectral expansion, we preserve the anti-symmetric structure of the advection operator in the Vlasov equation~\eqref{vlasov-continuum-anti-symmetric-form} for the SW and SW square-root formulations. The anti-symmetry of $A^{s}(x, v, t)$ is preserved since the SW-Hermite expansion results in a self-adjoint Galerkin projection operator~\cite{gottlieb_1977_stability}.
\begin{theorem}\label{thm:theorem-velocity-antisymmetry}
    The SW-Hermite velocity spectral discretization in Eqns.~\eqref{expansion-seperation-of-variables} and~\eqref{spectral-approximation-sqrt} preserves the anti-symmetric structure of the advection operator $\left(\mathcal{V}, A^{s}(x, v, t)\right)$ in Eq.~\eqref{advection-operator-continuum}.
\end{theorem}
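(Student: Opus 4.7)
The plan is to recast the velocity-projected system~\eqref{f-vlasov-pde} in matrix-operator form $\partial_t \vec{C}^s = \tilde{A}^s \vec{C}^s$, where $\vec{C}^s(x,t) = [C_0^s, \ldots, C_{N_v-1}^s]^{\top}$, and then check directly that $\tilde{A}^s$ is anti-symmetric under the natural coefficient-space inner product $\langle \vec{C}, \vec{D}\rangle_{N_v} \coloneqq \int_{\Omega_x} \vec{C}^{\top} \vec{D}\, \mathrm{d}x$. This inner product is the restriction of the $L^2(\Omega_x \times \Omega_v)$ inner product to the span of the truncated basis, via Parseval's identity applied to the orthogonality relation~\eqref{orthogonal-property}; hence anti-symmetry under $\langle\cdot,\cdot\rangle_{N_v}$ is the correct notion of preserving the continuum anti-symmetry of $\left(\mathcal{V}, A^s(x,v,t)\right)$.

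First, reading off the coefficients in~\eqref{f-vlasov-pde}, I would write $\tilde{A}^s = -M_v \partial_x - \tfrac{q^s}{m^s} E(x,t)\, D_v$, where the matrices $M_v, D_v \in \mathbb{R}^{N_v \times N_v}$ encode multiplication by $v$ and differentiation $\partial_v$, respectively, via identities~\eqref{identity-1} and~\eqref{identity-3}. Explicitly, $(M_v)_{n,n} = u^s$ and $(M_v)_{n,n+1} = (M_v)_{n+1,n} = \alpha^s\sqrt{(n+1)/2}$ (symmetric, tridiagonal), while $(D_v)_{n,n+1} = \tfrac{1}{\alpha^s}\sqrt{(n+1)/2} = -(D_v)_{n+1,n}$ (anti-symmetric, tridiagonal), with the closure-by-truncation convention $C_{-1}^s = C_{N_v}^s = 0$. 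The symmetry of $M_v$ and anti-symmetry of $D_v$ are the two structural observations that power the rest of the argument.

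Second, I would compute $\langle \tilde{A}^s \vec{C}, \vec{D}\rangle_{N_v}$ by splitting into the two pieces. For the $-M_v \partial_x$ piece, $M_v$ is constant in $x$ and therefore commutes with $\partial_x$; then anti-symmetry of $\partial_x$ on periodic functions (integration by parts with vanishing boundary terms) combined with symmetry of $M_v$ yields $\langle M_v \partial_x \vec{C}, \vec{D}\rangle_{N_v} = -\langle \vec{C}, M_v \partial_x \vec{D}\rangle_{N_v}$. For the $-\tfrac{q^s}{m^s} E(x,t) D_v$ piece, pointwise multiplication by $E(x,t)$ is self-adjoint on $L^2(\Omega_x)$ and commutes with the constant-in-$x$ matrix $D_v$, so anti-symmetry of $D_v$ gives $\langle E D_v \vec{C}, \vec{D}\rangle_{N_v} = -\langle \vec{C}, E D_v \vec{D}\rangle_{N_v}$. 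Summing the two pieces yields $\langle \tilde{A}^s \vec{C}, \vec{D}\rangle_{N_v} = -\langle \vec{C}, \tilde{A}^s \vec{D}\rangle_{N_v}$, i.e.\ the defining anti-symmetry property~\eqref{anti-commutative}. The same argument applies verbatim to the SW square-root formulation, because by Proposition~\ref{prop:sqrt-formulation} its expansion coefficients obey the identical system~\eqref{f-vlasov-pde}.

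The step I expect to be the main obstacle is justifying that the closure by truncation does not destroy anti-symmetry. A finite-rank Galerkin projection of an unbounded anti-symmetric operator can easily fail to be anti-symmetric whenever the retained modes couple asymmetrically to the discarded ones --- indeed, this is precisely what afflicts the asymmetrically weighted Hermite expansion. The saving grace here is that $M_v$ and $D_v$ are tridiagonal, so the only possible coupling to the discarded modes is the single entry in position $(N_v-1, N_v)$; setting $C_{N_v}^s = 0$ simply deletes it, leaving the principal $N_v \times N_v$ submatrices, which inherit (anti-)symmetry directly from their infinite-dimensional counterparts. This is the structural reason the symmetrically weighted basis, unlike the asymmetrically weighted one, produces an anti-symmetric semi-discrete advection operator.
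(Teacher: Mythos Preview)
Your proof is correct and follows essentially the same approach as the paper: both write the velocity-projected system in the matrix-operator form $\partial_t \Psi^s = A_v^s \Psi^s$ and verify anti-symmetry of $A_v^s$ using the anti-symmetry of $\partial_x$ on the periodic domain together with the tridiagonal structure of the Hermite coupling. Your decomposition $\tilde{A}^s = -M_v\partial_x - \tfrac{q^s}{m^s}E\,D_v$ with $M_v$ symmetric and $D_v$ anti-symmetric is a slightly cleaner bookkeeping than the paper's explicit block matrix with $G_\pm^s$ entries, and your closing paragraph on why closure by truncation preserves anti-symmetry anticipates exactly the remark the paper makes immediately after the theorem.
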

\begin{proof}
The system of PDEs for the SW and SW square-root formulation expansion coefficients $C_{n}^{s}(x, t)$ in Eq.~\eqref{f-vlasov-pde} can be written in vector form as 
\begin{equation}\label{vlasov-vector-form-pde}
    \frac{\partial}{\partial t} \Psi^{s}(x, t) = A_{v}^{s}(x, t)\Psi^{s}(x, t),
\end{equation}
where 
\begin{equation*}
    \Psi^{s}(x, t) \coloneqq \begin{bmatrix}
           C_{0}^{s}(x, t)\\
           C_{1}^{s}(x, t)\\
           \vdots \\
           C_{N_{v}-1}^{s}(x, t)\\
         \end{bmatrix} \in \mathcal{W}^{N_{v}},  
\end{equation*}
\begin{equation*}
    A_{v}^{s}(x, t) = \begin{bmatrix}
    -u^{s} \frac{\partial }{\partial x}&  -\sqrt{\frac{1}{2}}G_{+}^{s}(x, t) & 0 &  \ldots & 0 \\
    \sqrt{\frac{1}{2}} G_{-}^{s}(x, t) & -u^{s} \frac{\partial }{\partial x} & -\sqrt{\frac{2}{2}} G_{+}^{s}(x, t) &  \ldots & 0\\
    & &  & & \\
    & \ddots & \ddots &  \ddots&   \\
    & &  & & \\
    0 & 0 &  \sqrt{\frac{N_v-2}{2}} G_{-}^{s}(x, t)& -u^{s} \frac{\partial }{\partial x} & -\sqrt{\frac{N_v-1}{2}} G_{+}^{s}(x, t)   \\
    0 & 0 & 0 & \sqrt{\frac{N_v-1}{2}} G_{-}^{s}(x, t)& -u^{s} \frac{\partial }{\partial x}
 \end{bmatrix} \in \mathcal{W}^{N_{v} \times N_{v}},
\end{equation*}
such that
\begin{equation*}
    G_{+}^{s}(x, t) = \frac{q^{s}}{m^s \alpha^s} E(x, t) + \alpha^{s} \frac{\partial}{\partial x} \qquad \text{and}\qquad 
    G_{-}^{s}(x, t) = \frac{q^{s}}{m^s \alpha^s} E(x, t) - \alpha^{s} \frac{\partial}{\partial x}.
\end{equation*}
Since the operator $(\mathcal{W}, \frac{\partial}{\partial x})$ is anti-symmetric in a periodic domain, the operator $\left(\mathcal{W}^{N_{v}}, A_{v}^{s}(x, t)\right)$ is also anti-symmetric in a periodic domain, i.e. $(\mathcal{W}^{N_{v}}, A_{v}^{s}(x, t)) = (\mathcal{W}^{N_{v}}, -A_{v}^{s}(x, t)^{\top})$. 
\end{proof}

Theorem~\ref{thm:theorem-velocity-antisymmetry} holds only for the closure by truncation, i.e. $C_{N_{v}}^{s}(x, t) = 0$ in Eq.~\eqref{f-vlasov-pde}. Otherwise, modifying the last row in the operator $(\mathcal{W}^{N_{v}}, A_{v}^{s}(x, t))$ or including a source term breaks anti-symmetry.

\subsubsection{Anti-symmetric Representation after Discretization in Space}\label{sec:anti-symmetric-x}
After using the SW-Hermite spectral expansion in velocity, we discretize the spatial domain using centered finite differencing. We show that the centered finite differencing scheme preserves the anti-symmetric structure of the advection operator in the Vlasov equation~\eqref{vlasov-continuum-anti-symmetric-form} for the SW and SW square-root formulations. In the semi-discrete level, the dynamics are described as infinitesimal rotations since the advection operator becomes an anti-symmetric matrix whose nonzero eigenvalues are purely imaginary in the form of complex conjugate pairs~\cite{halpern_2021_antisymmetric}.  It is particularly advantageous to use centered finite differencing since aside from retaining the anti-symmetric property of the advection operator it also preserves important calculus identities such as $\text{curl grad}=0$ and $\text{div curl}=0$ in multi-dimensional settings~\cite{halpern_2021_antisymmetric}. 
\begin{theorem}
    Centered finite difference representation of spatial derivatives in Eq.~\eqref{vlasov-vector-form-pde} preserves the anti-symmetric structure of the advection operator~$(\mathcal{W}^{N_{v}}, A^{s}_{v}(x, t))$ in Eq.~\eqref{vlasov-vector-form-pde}, and thus preserves the anti-symmetric structure of the advection operator~$(\mathcal{V}, A^{s}(x, v, t))$ in Eq.~\eqref{vlasov-continuum-anti-symmetric-form}.
\end{theorem}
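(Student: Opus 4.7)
The plan is to mimic the continuous argument at the discrete level. Let $N_x$ be the number of spatial grid points on the periodic domain $\Omega_x=[0,\ell]$ with uniform spacing $\Delta x=\ell/N_x$, and let $D\in\mathbb{R}^{N_x\times N_x}$ denote the standard second-order centered finite difference matrix with periodic wrap-around, i.e.\ $D_{ij}=(\delta_{i+1,j}-\delta_{i-1,j})/(2\Delta x)$ with indices taken modulo $N_x$. First I would establish the key building block: under the discrete $\ell^2$ inner product $\langle y,z\rangle_{\Delta x}=\Delta x\sum_i y_i z_i$ (the Riemann sum analogue of the continuous $L^2$ pairing on $\Omega_x$), the matrix $D$ satisfies $D^\top=-D$, which is immediate from its entries and the periodic closure. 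Thus the continuous anti-symmetric operator $\partial/\partial x$ on $\mathcal{W}$ is replaced by a matrix that remains anti-symmetric on the discrete analogue of $\mathcal{W}$.

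Next I would handle the multiplication-by-$E$ piece. The discrete realization of the pointwise multiplication operator $y\mapsto E(x,t)\,y$ is the diagonal matrix $\mathrm{diag}(E(x_1,t),\ldots,E(x_{N_x},t))$, which is real and symmetric. Consequently the discrete analogues of the off-diagonal coupling operators become
\begin{equation*}
\hat G_+^s = \tfrac{q^s}{m^s \alpha^s}\,\mathrm{diag}(E) + \alpha^s D,\qquad
\hat G_-^s = \tfrac{q^s}{m^s \alpha^s}\,\mathrm{diag}(E) - \alpha^s D,
\end{equation*}
and a direct transpose using $D^\top=-D$ and the symmetry of $\mathrm{diag}(E)$ gives the crucial identity $(\hat G_+^s)^\top = \hat G_-^s$ and $(\hat G_-^s)^\top = \hat G_+^s$.

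Then I would assemble the full discrete advection operator $\hat A_v^s$ by substituting $D$ for $\partial/\partial x$ everywhere in the block matrix displayed in Theorem~\ref{thm:theorem-velocity-antisymmetry}, so that $\hat A_v^s$ is a block tridiagonal matrix of size $N_v N_x\times N_v N_x$ acting on the stacked vector of grid-valued coefficients. Block-wise verification of anti-symmetry proceeds on three cases: the diagonal blocks $-u^s D$ satisfy $(-u^s D)^\top=u^s D=-(-u^s D)$; the super-diagonal blocks $-\sqrt{(n+1)/2}\,\hat G_+^s$ transpose to $-\sqrt{(n+1)/2}\,\hat G_-^s$, which is exactly the negative of the corresponding sub-diagonal block $\sqrt{(n+1)/2}\,\hat G_-^s$; and the zero blocks trivially match. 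Therefore $\hat A_v^{s,\top} = -\hat A_v^s$, establishing anti-symmetry of the fully discrete advection operator with respect to the product inner product $\langle \cdot,\cdot\rangle_{\Delta x}$ on $\mathbb{R}^{N_v N_x}$, which by Theorem~\ref{thm:theorem-velocity-antisymmetry} completes the chain back to the continuum operator $(\mathcal{V},A^s(x,v,t))$.

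The main obstacle I anticipate is bookkeeping rather than mathematical difficulty: the operator $\hat A_v^s$ couples the velocity (Hermite) index and the spatial (grid) index through the electric-field term, so care is needed to ensure that the transpose is taken consistently block-wise and that the periodic closure of $D$ is used exactly where needed; any non-periodic modification or any closure other than $C^s_{N_v}=0$ would contribute a boundary or a truncation term that spoils $D^\top=-D$ or the off-diagonal transpose identity, so the scope of the claim has to be stated accordingly.
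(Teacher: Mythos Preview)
Your proposal is correct and follows essentially the same approach as the paper: both arguments replace $\partial/\partial x$ by a periodic centered finite-difference matrix, verify that this matrix is anti-symmetric, and then read off the anti-symmetry of the full block-tridiagonal operator from the relation $(\hat G_+^s)^\top=\hat G_-^s$ (the paper leaves this last step implicit, whereas you spell it out block-by-block). The only minor difference is that the paper states the result for arbitrary even order $p$ of the centered stencil while you fix $p=2$, but the argument is identical in either case.
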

\begin{proof}
We discretize the spatial direction uniformly with $\Delta x$ mesh spacing and denote the discretized expansion coefficients as $\mathbf{C}^{s}_{n}(t) = [C^{s}_{n}(x^{(1)}, t), C^{s}_{n}(x^{(2)}, t), \ldots, C^{s}_{n}(x^{(N_{x})}, t)]^{\top} \in \mathbb{R}^{N_{x}}$, where $N_{x}$ is the number of mesh points in space and $x^{(j)}$ denotes the spatial grid location with index $j=1, 2, \ldots, N_{x}$. Discretizing Eq.~\eqref{vlasov-vector-form-pde} using centered finite differencing results in the following semi-discrete system of ordinary differential equations (ODEs):
\begin{equation}\label{vlasov-ode}
    \frac{\mathrm{d}}{\mathrm{d}t} \mathbf{\Psi}^{s}(t) = \mathbf{A}^{s}(t) \mathbf{\Psi}^{s}(t)
\end{equation}
where 
\begin{equation}\label{psi-state-definition}
    \mathbf{\Psi}^{s}(t) = \begin{bmatrix}
        \mathbf{C}^{s}_{0}(t)\\
        \mathbf{C}^{s}_{1}(t)\\
        \vdots \\
        \mathbf{C}^{s}_{N_{v}-1}(t)
    \end{bmatrix} \in  \mathbb{R}^{N_{v}N_{x}}
\end{equation}
\begin{equation*}
    \mathbf{A}^{s}(t) = \begin{bmatrix}
    -u^{s} \mathbf{D}^{p}&  -\sqrt{\frac{1}{2}}\mathbf{G}_{+}^{s, p}(t) & 0 &  \ldots & 0 \\
    \sqrt{\frac{1}{2}} \mathbf{G}_{-}^{s, p}(t) & -u^{s} \mathbf{D}^{p} & -\sqrt{\frac{2}{2}} \mathbf{G}_{+}^{s, p}(t) &  \ldots & 0\\
    & &  & & \\
    & \ddots & \ddots &  \ddots&   \\
    & &  & & \\
    0 & 0 &  \sqrt{\frac{N_v-2}{2}} \mathbf{G}_{-}^{s, p}(t)& -u^{s} \mathbf{D}^{p} & -\sqrt{\frac{N_v-1}{2}} \mathbf{G}_{+}^{s, p}(t)   \\
    0 & 0 & 0 & \sqrt{\frac{N_v-1}{2}} \mathbf{G}_{-}^{s, p}(t)& -u^{s} \mathbf{D}^{p}
 \end{bmatrix} \in \mathbb{R}^{N_{v} N_{x}\times N_{v}N_{x}},
\end{equation*}
such that $\mathbf{G}_{+}^{s, p}(t) = \frac{q^{s}}{m^s \alpha^s} \mathbf{E}(t) + \alpha^{s} \mathbf{D}^{p}$,  $\mathbf{G}_{-}^{s, p}(t) = \frac{q^{s}}{m^s \alpha^s} \mathbf{E}(t) - \alpha^{s}\mathbf{D}^{p}$, and the discrete form of the electric field is denoted by $\mathbf{E}(t) =\left[E(x^{(1)}, t), E(x^{(2)}, t), \ldots, E(x^{(N_{x})}, t)\right]^{\top} \in \mathbb{R}^{N_{x}}$. The semi-discrete Vlasov equation~\eqref{vlasov-ode} in the SW formulation is coupled to the following semi-discrete Poisson equation 
\begin{equation}\label{posisson-ode-sw}
    \mathbf{D}^{p} \mathbf{E}(t) = \sum_{s} q^{s} \alpha^{s} \sum_{n=0}^{N_{v}-1} \mathcal{I}_{n} \mathbf{C}_{n}^{s}(t), 
\end{equation}
and similarly, the equivalent semi-discrete Vlasov equation~\eqref{vlasov-ode} in the SW square-root formulation is coupled to 
\begin{equation}\label{poisson-ode-sw-sqrt}
    \mathbf{D}^{p} \mathbf{E}(t) = \sum_{s} q^{s} \alpha^{s} \sum_{n=0}^{N_{v}-1} \mathbf{C}_{n}^{s}(t)^2, 
\end{equation}
The periodic centered finite difference operator of order $p$ is denoted by 
\begin{equation}\label{central-finite-differencing}
    \mathbf{D}^{p} = \begin{bmatrix}
    d^{p}_{0} & d^{p}_{1} & d^{p}_{2} & \ldots & d^{p}_{N_{x}-1}\\
    d^{p}_{N_{x}-1} & d^{p}_{0} & d^{p}_{1} &  \ldots & d^{p}_{N_{x}-2}\\
    d^{p}_{N_{x}-2} & d^{p}_{N_{x}-1} & d^{p}_{0} &  \ldots & d^{p}_{N_{x}-3}\\
    & & &\ddots & \\
    d^{p}_{1} & d^{p}_{2} & d^{p}_{3} & \ldots & d^{p}_{0}\\
    \end{bmatrix} \in \mathbb{R}^{N_{x} \times N_{x}}.
\end{equation}
The stencil coefficients $d_{0}^{p}, d_{1}^{p}, \ldots, d_{N_{x}-1}^{p}$ are derived from the Taylor series approximation of order $p$. For example, the stencil coefficients for $p = 2, 4, 6, 8$ are defined as
\begin{alignat*}{3}
d^{2} &\coloneqq \frac{1}{\Delta x}\left[0 \Hquad \frac{1}{2} \Hquad 0 \Hquad \cdots \Hquad 0 \Hquad -\frac{1}{2} \right] \qquad \qquad &&\text{(2nd order)}\\
    d^{4} &\coloneqq \frac{1}{\Delta x} \left[0 \Hquad \frac{2}{3} \Hquad -\frac{1}{12} \Hquad 0 \Hquad \cdots \Hquad 0 \Hquad \frac{1}{12} \Hquad -\frac{2}{3}\right] \qquad \qquad &&\text{(4th order)}\\
    d^{6} &\coloneqq \frac{1}{\Delta x} \left[0 \Hquad \frac{3}{4} \Hquad  \Hquad -\frac{3}{20} \Hquad \frac{1}{60} \Hquad 0 \Hquad \cdots \Hquad 0 \Hquad -\frac{1}{60} \Hquad \frac{3}{20} \Hquad -\frac{3}{4}\right] \qquad \qquad &&\text{(6th order)}\\
    d^{8} &\coloneqq \frac{1}{\Delta x} \left[0 \Hquad \frac{4}{5} \Hquad  \Hquad -\frac{1}{5} \Hquad \frac{4}{105} \Hquad \frac{-1}{280} \Hquad 0 \Hquad \cdots \Hquad 0 \Hquad \frac{1}{280} \Hquad -\frac{4}{105} \Hquad \frac{1}{5} \Hquad -\frac{4}{5}\right] \qquad \qquad &&\text{(8th order)}
\end{alignat*}
The finite difference operator $\mathbf{D}^{p}$, defined in Eq.~\eqref{central-finite-differencing}, is a singular matrix, thus it is not invertible. Upon reflection around $d_0^{p}=0$, we can see that the centered finite difference operator $\mathbf{D}^{p}$ is anti-symmetric since $(\mathbb{R}^{N_{x}}, \mathbf{D}^{p}) = (\mathbb{R}^{N_{x}}, -(\mathbf{D}^{p})^{\top})$. Thus, the discretized advection operator is anti-symmetric as $(\mathbb{R}^{N_{v}N_{x}}, \mathbf{A}^{s}(t)) = (\mathbb{R}^{N_{v}N_{x}}, -\mathbf{A}^{s}(t)^{\top})$. 
\end{proof}

\begin{remark}
A non-uniform spacing in the spatial direction $x$ breaks anti-symmetry, since then $(\mathbb{R}^{N_{x}},  \mathbf{D}^{p}) \neq - (\mathbb{R}^{N_{x}}, (\mathbf{D}^{p})^{\top})$.
\end{remark}

\begin{remark}
Any spectral discretization in space, such as Fourier spectral expansion, also preserves the anti-symmetric structure of the advection operator~$(\mathcal{W}^{N_{v}}, A^{s}_{v}(x, t))$ in Eq.~\eqref{vlasov-vector-form-pde}, and thus preserves the anti-symmetric structure of the advection operator~$(\mathcal{V}, A^{s}(x, v, t))$ in Eq.~\eqref{vlasov-continuum-anti-symmetric-form}. \ref{sec:Appendix-A} derives the semi-discrete Vlasov-Poisson equations using a spatial Fourier spectral expansion and discusses in detail its anti-symmetric property. 
\end{remark}
\noindent
Handling problems with strong spatial discontinuities will require more sophisticated spatial discretization schemes, such as weighted essentially nonoscillatory or discontinuous Galerkin~\cite{koshkarov_2021_sps}.

\subsection{Numerical Stability}\label{sec:stability}
The SW and SW square-root formulations are algebraically stable as $\frac{\mathrm{d}}{\mathrm{d}t}\left(\mathbf{\Psi}^{s}(t)^{\top}\mathbf{\Psi}^{s}(t)\right) \leq 0$, see~\cite{gottlieb_1977_stability}. In particular, $\mathbf{\Psi}^{s}(t)^{\top}\mathbf{\Psi}^{s}(t)$, is a quadratic invariant of the SW and SW square root formulations. This is due to the anti-symmetric structure of the semi-discrete Vlasov equation~\eqref{vlasov-ode} and the following proposition. 
\begin{proposition}{{\cite[\S IV.1]{hairer_2006_geometric}}}\label{thm:anti-symmetric-invariants}
Consider an ODE of the form 
\begin{equation*}
    \frac{\mathrm{d}}{\mathrm{d}t} y = A(y)y,
\end{equation*}
where $y(t) \in \mathbb{R}^{n}$ and $(\mathbb{R}^{n}, A(y))$ is an anti-symmetric operator, then $y^{\top} y$ is an invariant of the ODE. 
\end{proposition}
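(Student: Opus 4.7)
The plan is to differentiate the scalar quantity $y^\top y$ with respect to $t$ and then exploit the anti-symmetry of $A(y)$ pointwise in time to show that the derivative vanishes identically. Concretely, I would first apply the product rule to obtain $\frac{\mathrm{d}}{\mathrm{d}t}(y^\top y) = \dot{y}^\top y + y^\top \dot{y} = 2 y^\top \dot{y}$, where the second equality uses that both terms are scalars and equal to each other.

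Next, I would substitute the governing ODE $\dot{y} = A(y)y$ into this expression to get $\frac{\mathrm{d}}{\mathrm{d}t}(y^\top y) = 2 y^\top A(y) y$. The key step is then to invoke property~\eqref{preserves-square-norms} of anti-symmetric operators from Definition~\ref{def:anti-symmetric}, which states that $\langle A z, z \rangle = 0$ for all $z$ in the domain. Applied with $z = y(t) \in \mathbb{R}^n$ and the standard Euclidean inner product, this yields $y^\top A(y) y = 0$ at every instant $t$, independent of the fact that $A$ depends on $y$, because anti-symmetry is evaluated pointwise.

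Combining these observations gives $\frac{\mathrm{d}}{\mathrm{d}t}(y^\top y) = 0$, so $y^\top y$ is constant along trajectories and is therefore an invariant of the ODE. If I wanted to avoid citing property~\eqref{preserves-square-norms} directly, I could give a one-line self-contained argument: since $y^\top A(y) y$ is a scalar, it equals its own transpose $y^\top A(y)^\top y = -y^\top A(y) y$, so $2 y^\top A(y) y = 0$.

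There is no real obstacle here; the proposition is a one-line consequence of the definition of anti-symmetry. The only conceptual subtlety worth noting is that $A$ depends on $y$, but this nonlinearity is irrelevant since anti-symmetry holds for every fixed argument and hence along the solution trajectory.
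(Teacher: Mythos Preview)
Your proposal is correct and essentially identical to the paper's proof. The paper writes $\frac{\mathrm{d}}{\mathrm{d}t}(y^\top y) = y^\top A(y)^\top y + y^\top A(y) y = y^\top(A(y)^\top + A(y))y = 0$, keeping the two product-rule terms separate and using $A^\top = -A$ directly, whereas you first combine them into $2y^\top A(y)y$ and then invoke property~\eqref{preserves-square-norms}; these are trivially equivalent rearrangements of the same one-line argument.
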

\begin{proof} We recall the proof here to keep it self-consistent:
$\frac{\mathrm{d}}{\mathrm{d}t} (y^{\top} y) = (\frac{\mathrm{d}}{\mathrm{d}t}y)^{\top} y + y^{\top} \frac{\mathrm{d}}{\mathrm{d}t} y = y^{\top} A(y)^{\top} y + y^{\top} A(y) y = y^{\top}\left(A(y)^{\top}+ A(y)\right) y = 0.$
\end{proof}
Thus, since the square of the $L^{2}$-norm of the state vector $\mathbf{\Psi}^{s}(t)$ is conserved in the semi-discrete level, it guarantees that the system is algebraically stable for both SW and SW square-root formulations. Note that $\mathbf{\Psi}^{s}(t)^{\top}\mathbf{\Psi}^{s}(t)$ in the SW formulation is equivalent to the $L^{2}$-norm of the distribution function, i.e. $\int_{\Omega_{x}}\int_{\Omega_{v}} f^{s}(x, v, t)^{2} \mathrm{d}v \mathrm{d} x$, also known as the \textit{enstrophy}, and in the SW square-root formulation it is the discrete equivalent of the number of particles of species $s$, i.e. $\int_{\Omega_{x}}\int_{\Omega_{v}} f^{s}(x, v, t)\mathrm{d}v \mathrm{d}x$. The SW conservation of enstrophy is a well-known result shown by~\cite{holloway_1996_sw, schumer_1998_sw_aw, kormann_2021_sw} with a Fourier expansion in space, however, this work is the first to casts it under the umbrella of anti-symmetry and central finite differencing.

\subsection{Time Reversibility}\label{sec:time-reversibility}
The Vlasov-Poisson system of equations~\eqref{vlasov-continuum}-\eqref{poisson-continuum} are time-reversible, i.e. if all velocities and time were reversed, the plasma's prior motions would be replicated.
The anti-symmetric formulation is approximately time reversible when using temporal integrators that are not time-reversal symmetric~\cite{halpern_2020_antisymmetric}. For example, consider an explicit Euler scheme applied to Eq.~\eqref{vlasov-ode}, such that the solution at each time iteration $k$ is  
\begin{equation}\label{forward-euler-step}
    \mathbf{\Psi}^{s}_{k+1} = \left(\mathbf{I} +\Delta t\mathbf{A}^{s}_{k}\right) \mathbf{\Psi}^{s}_{k},
\end{equation}
where $\mathbf{I}$ is the identity matrix and $\Delta t$ is the time step. When $\Delta t$ is sufficiently small, we have
\begin{equation*}
    \left( \mathbf{I} + \Delta t \mathbf{A}^{s}_{k} \right)^{\top} \left(\mathbf{I} +\Delta t \mathbf{A}^{s}_{k}\right) = \left( \mathbf{I} - \Delta t \mathbf{A}^{s}_{k} \right)\left(\mathbf{I} +\Delta t \mathbf{A}^{s}_{k}\right) = \mathbf{I} + \mathcal{O}(\Delta t^2).
\end{equation*}
Multiplying Eq.~\eqref{forward-euler-step} by $(\mathbf{I} + \Delta t\mathbf{A}^{s}_{k})^{\top}$, results in  $\left(\mathbf{I} - \Delta t\mathbf{A}^{s}_{k}\right)\mathbf{\Psi}^{s}_{k+1} = \mathbf{\Psi}^{s}_{k} + \mathcal{O}(\Delta t^2)$, which is approximately (up to second-order accuracy) the time-reversed version of the explicit Euler method, see Eq.~\eqref{forward-euler-step}. 
Note that any time-symmetric Runge-Kutta method is implicit~\cite[\S V.2]{hairer_2006_geometric}, and the most straightforward among these methods is the implicit midpoint method. Although explicit temporal integrators are not time-symmetric, the anti-symmetric formulation renders such explicit integrators approximately time-reversible. 
We demonstrate the (approximate) time-reversibility of the anti-symmetric formulation on the linear Landau damping problem in section~\ref{sec:landau-numerical-results} with a third-order explicit Runge-Kutta method. 

\subsection{Relation to the Anti-symmetric Poisson Bracket}\label{sec:hamiltonian-poisson-bracket}
The canonical Poisson bracket~\cite{morrison_1981_1d1v_vp, morrison_1998_casimirs} is defined as
\begin{equation}\label{canonical-poisson-bracket}
    \left\{f, g\right\} \coloneqq \frac{\partial g}{\partial v} \frac{\partial f}{\partial x} - \frac{\partial g}{\partial x}\frac{\partial f}{\partial v},
\end{equation}
for two arbitrary functions $f, g$, which is equivalent to the Jacobian determinant $\partial (f, g)/\partial (x, v)$. The canonical Poisson bracket in Eq.~\eqref{canonical-poisson-bracket} is an anti-symmetric operator since $\left\{f, g\right\} = - \left\{g, f\right\}$. The Vlasov equation can then be written as 
\begin{equation*}
    \frac{\partial f^{s}}{\partial t} = \left\{f^{s}, \frac{1}{m^{s}}\frac{\delta \mathcal{E}}{\delta f^{s}}\right\},
\end{equation*}
where the functional 
\begin{equation} \label{eq:Efunctional}
    \mathcal{E}[f^{s}] \coloneqq \frac{1}{2} \int_{\Omega_{x}}\int_{\Omega_{v}}\sum_{s}m^{s} v^2 f^{s}(x, v, t) \mathrm{d}v \mathrm{d}x + \frac{1}{2} \int_{\Omega_{x}} E(x, t)^2\mathrm{d}x
\end{equation}
is the total energy of the system, i.e., the Hamiltonian. The variational derivative\footnote{The variational derivative $\frac{\delta \mathcal{E}}{\delta f^{s}}$ of a functional $\mathcal{E}[f^{s}]$ is defined as 
\begin{equation*}
    \int_{\Omega_{v}}\int_{\Omega_{x}} \frac{\delta \mathcal{E}}{\delta f^{s}} \delta f^{s} \mathrm{d}x \mathrm{d}v \coloneqq \lim_{\varepsilon \to 0}\frac{\mathcal{E}[f^{s}+\varepsilon\delta f^{s}] - \mathcal{E}[f^{s}]}{\varepsilon}
\end{equation*}.} of the total energy is $\frac{\delta \mathcal{E}}{\delta f^{s}} = \frac{1}{2}m^{s} v^2 + q^{s}\phi(x, t)$, where $\phi(x, t)$ is the electric potential, i.e. $E(x, t) =-\frac{\partial \phi(x, t)}{\partial x}$. The variational derivative is computed via integration by parts and the Poisson equation~\eqref{poisson-continuum}.
The anti-symmetric advection operator $A^{s}(x, v, t)$, defined in Eq.~\eqref{advection-operator-continuum}, can also be written as the canonical Poisson bracket, that is
\begin{equation*}
    A^{s}(x, v, t) =\left\{\cdot, \frac{1}{m^{s}}\frac{\delta \mathcal{E}}{\delta f^{s}}\right\},
\end{equation*}
Thus, by preserving the anti-symmetry of the advection operator in semi-discrete form, the proposed method can also preserve the anti-symmetry of the canonical Poisson bracket. 

The non-canonical Poisson bracket of the Vlasov-Poisson system~\cite{morrison_1981_1d1v_vp} is defined as
\begin{equation*}
    \{ \{ \mathcal{F}, \mathcal{G}\}\} \coloneqq \int_{\Omega_{v}}\int_{\Omega_{x}} \frac{f^{s}}{m^{s}}\left\{\frac{\delta \mathcal{F}}{\delta f^{s}}, \frac{\delta \mathcal{G}}{\delta f^{s}} \right\}\mathrm{d} x \mathrm{d} v, 
\end{equation*}
where $\mathcal{F}[f^{s}]$ and $\mathcal{G}[f^{s}]$ are arbitrary functionals acting on $f^s$. The Vlasov equation~\eqref{vlasov-continuum} can then be represented as 
\begin{equation*}
    \frac{\mathrm{d}}{\mathrm{d}t}\mathcal{F} = \{\{ \mathcal{F}, \mathcal{E}\}\}, 
\end{equation*}
where $\mathcal{F}[f^{s}]$ is any functional of $f^s$ and $\mathcal{E}$ is the total energy from Eq.~\eqref{eq:Efunctional}.  The noncanonical Poisson bracket is also anti-symmetric, such that $\{\{\mathcal{F}, \mathcal{E}\}\} = -\{\{\mathcal{E}, \mathcal{F}\}\}$. Therefore, by the anti-symmetry of the noncanonical Poisson bracket we know that $\frac{\mathrm{d}}{\mathrm{d} t}\mathcal{E} =\{ \{ \mathcal{E}, \mathcal{E} \} \} = -\{ \{ \mathcal{E}, \mathcal{E} \} \} = 0$. Any discretization that preserves the anti-symmetry of the noncanonical Poisson bracket will automatically conserve the Hamiltonian of the system, $\mathcal{E}[f^{s}]$, and other functional invariants, such as Casimirs $\mathcal{C}[f^{s}]$ (where $\{ \{\mathcal{C}, \mathcal{E} \} \}= 0$). As we show in section~\ref{sec:conservation-properties}, the SW and SW square-root formulations do not conserve the total energy $\mathcal{E}[f^{s}]$ of the system and therefore do not preserve the anti-symmetry of the noncanonical Poisson bracket. 
The work by~\cite{scovel_1994_lie_poisson} suggests that developing a discretization of the Vlasov-Poisson system with a discrete (Jacobi) noncanonical Poisson bracket can not be achieved via an Eulerian solver.

\subsection{Filamentation}\label{sec:filamentation}
Collisionless plasma can develop finer and finer scales in velocity resulting in phase space filamentation~\cite{cheng_1976_vp}. This is a consequence of the non-dissipative Hamiltonian structure of the governing equations. Filamentation is a common issue in Eulerian solvers due to their limited velocity resolution. In particular, in the case of using a spectral expansion in velocity space, the higher-order expansion coefficients try to overcompensate running out of resolution in velocity, such as in the nonlinear Landau damping test case (section~\ref{sec:nonlinear-landau-numerical-results}), which can cause numerical instabilities. Filamentation can also cause recurrence effects, where the solution is artificially periodic in time, such as in the linear Landau damping test case (section~\ref{sec:linear-landau-numerical-results}). In particular, the recurrence period scales approximately as the square root of the number of Hermite basis functions $\mathcal{O}(\sqrt{N_{v}})$~\cite{canosa_1974_recurrence, schumer_1998_sw_aw}. Previous work by~\cite{camporeale_sps_2016} introduces an artificial collisional term based on the Lenard-Bernstein collisional operator~\cite{lenard_bernstein_1958_collisions}, with a small collisional frequency, which dampens higher-order coefficients and, as a result, controls the filamentation effects. Incorporating this artificial collisional term does not break mass, momentum, or energy conservation when using the AW expansion, since such invariants are functions of the first three coefficients. However, applying an artificial collisional term to the SW expansion will break conservation laws since its invariants depend on all the coefficients in the expansion. Similarly, filtering techniques, such as Klimas~\cite{klimas_1994_filter} and Hou-Li~\cite{hou_li_2007_filter} filters, can be easily constructed for the AW expansion, yet are not trivially designed in the SW case since conservation laws depend on all expansion coefficients, which we discuss in detail in section~\ref{sec:conservation-properties}. Lastly, one can also apply flux limiter techniques to the higher-order coefficients to avoid spurious oscillations/numerical instabilities due to filamentation. The three strategies: (1) artificial collisions, (2) filtering, and (3) flux-limiters, are standard methods to handle filamentation, which should be implemented in a manner that avoids substantially influencing the conservation properties and physical results.

\subsection{Adaptive Hermite Parameters}\label{sec:hermite-adaptivity}
In general, allowing the Hermite spectral parameters $u^{s}$ and $\alpha^{s}$ in Eq.~\eqref{hermite-basis-function-definition} to vary in time can improve spectral convergence, stability, and mitigate filamentation~\cite{pagliantini_2023_adaptive, vencels_2016_masters_thesis, ma_2005_hermite, chatard_2022, fatone_2022_hermite, yana_2019_filtering, cai_2013_Nrxx, yin_2023_Nrxx}. 
For example, the two-stream instability, discussed in section~\ref{sec:two-stream-numerical-results}, leads to electron heating (manifested by an increase in $\alpha^{e}$) and deceleration of the average electron velocity of each beam (indicated by a decrease in $|u^{e}|$). Hence, the motivation for modifying the Hermite parameters over time stems from mathematical considerations, seeking to minimize errors in Hermite spectral approximation, and physics perspectives, aiming to describe accurately the plasma heating and bulk motion.
Since the Hermite shifting parameter $u^{s}$ can be interpreted as the characteristic fluid velocity and the scaling parameter $\alpha^{s}$ can be interpreted as the characteristic thermal velocity, the two Hermite parameters can be updated in time by monitoring the macroscopic quantities as shown by~\cite{pagliantini_2023_adaptive, vencels_2015_sps}, where
\begin{align}
    u^{s}(t) &\coloneqq \frac{1}{\mathcal{N}^{s}(t)} \int_{\Omega_{v}} \int_{\Omega_{x}} v f^{s}(x, v, t) \mathrm{d} x\mathrm{d} v, \qquad \mathrm{with} \qquad \mathcal{N}^{s}(t) \coloneqq \int_{\Omega_{v}} \int_{\Omega_{x}} f^{s}(x, v, t) \mathrm{d} x\mathrm{d} v, \label{update-u}\\
    \alpha^{s}(t) &\coloneqq \sqrt{\frac{1}{\mathcal{N}^{s}(t)} \int_{\Omega_{v}} \int_{\Omega_{x}} \left(v - u^{s}(t)\right)^2 f^{s}(x, v, t)\mathrm{d} x \mathrm{d} v}. \label{update-alpha}
\end{align}
Recent work by~\cite{pagliantini_2023_adaptive} showed that the time-dependent update of the Hermite parameters (for the AW-Hermite basis)  results in the evolution of the same semi-discrete equations as the non-adaptive method, only requiring an additional projection step at each update of the Hermite parameters.
Therefore, a comparable time-adaptive algorithm for the SW-Hermite basis will preserve the same semi-discrete equations as the non-adaptive method outlined in Eq.~\eqref{vlasov-ode}, maintaining the anti-symmetric structure of the advection operator.
Similar to the time-adaptive algorithm, a more general space-time adaptive algorithm, i.e. $u^{s}(x, t)$ and $\alpha^{s}(x, t)$, can also be done via tracking the first three moments of the distribution function. However, the spatial adaptivity of the Hermite parameters will change the semi-discrete equations and consequently the structure of the advection operator, which we leave for further investigation in future work.

\section{Conservation Properties of the Symmetrically Weighted and Symmetrically Weighted Square-Root Formulations} \label{sec:conservation-properties}
An ideal discretization method should conserve all the invariants of the Vlasov-Poisson system. This is a challenging problem since the system has an infinite number of invariants. 
The Vlasov-Poisson system invariants include the number of particles, total momentum, total energy, and infinitely many Casimirs~\cite{morrison_1998_casimirs}, i.e. $\mathcal{C}[f^{s}] = \int_{\Omega_{v}} \int_{\Omega_{x}} h(f^{s}(x, v, t))\mathrm{d}x \mathrm{d}v$, where $h(\cdot)$ is any function of the particle distribution function $f^{s}$. For example, by choosing $h(f^{s}) = f^{s}$, the Casimir invariant is the number of particles of species $s$, and by choosing $h(f^{s}) = f^{s}\log(f^{s})$, the Casimir invariant is the kinetic entropy, and lastly, by choosing $h(f^{s}) = (f^{s})^{p}$ with $p \in \mathbb{N}_{>0}$, the Casimir invariant is the $L^{p}$-norm of the distribution function~\cite{holloway_1996_sw}. 
We focus on analyzing the classical invariants, namely, the number of particles, total momentum, and total energy. 

The conservation constraints for the SW and SW square-root formulations (aside from energy conservation for the SW square-root formulation) are due to the SW expansion in velocity, i.e. analogous conservation properties hold prior to finite difference discretization in space. For each formulation, we derive the \textit{drift rate} as the rate at which conservation no longer holds, which is a quadratic expression of the expansion coefficients and electric field. The drift rates (aside from total energy in the SW square-root formulation) depend only on the last expansion coefficient, which we will see in Theorem~\ref{thm:mass-conservation-sw}--\ref{thm:momentum-conservation-sw-sqrt} below, such that in the limit of an infinite number of velocity spectral terms $N_{v}\to \infty$ the drift rate approaches zero. The derived drift rates correspond to the closure by truncation, i.e. $C_{N_{v}}(x, t) = 0$. One can modify the drift rates by changing the closure, however, only closure by truncation can preserve anti-symmetry. 

Table~\ref{tab:SW_vs_SW_sqrt} summarizes the conservation and stability properties of the SW and SW square-root formulations from section~\ref{sec:anti-symmetry-sec-3} and section~\ref{sec:conservation-properties}.
We discuss in greater detail the conservation properties of the SW formulation in section~\ref{sec:SW-conservation-properties} and the SW square-root formulation in section~\ref{sec:SW-sqrt-conservation-properties}. In all derivations we use the trapezoidal rule\footnote{The trapezoid rule for approximating the spectral expansion coefficients $C_{n}^{s}(x, t)$ spatial integral in a periodic domain is given by $\int_{0}^{\ell}C_{n}^{s}(x, t) \mathrm{d}x\approx \Delta x  \mathbf{1}^{\top} \mathbf{C}_{n}^{s}(t)$, where $\mathbf{1} \in \mathbb{R}^{N_{x}}$ is a column vector of all ones.} to approximate all integrals in space.
\noindent
\begin{table}
\caption{A comparison between the SW and SW square-root formulation properties.}
\centering
\begin{tabular}{c |c |c }
\textbf{Properties} & \textbf{SW} & \textbf{SW square-root} \\
\hline
conservation of the number of particles & if $N_{v}$ is odd & \cmark  \\
\hline
conservation of total momentum  & if $N_{v}$ is even \& $u^{s} = 0, \Hquad \forall s$ & \xmark \\
\hline
conservation of total energy  & if $N_{v}$ is odd \& $u^{s} = 0, \Hquad \forall s$ & \xmark\\
\hline
positivity preserving & \xmark & \cmark\\
\hline 
algebraically stable & \cmark & \cmark
\end{tabular}
\label{tab:SW_vs_SW_sqrt}
\end{table}

\subsection{Symmetrically Weighted Formulation Conservation Properties}\label{sec:SW-conservation-properties}
We show below that the SW formulation conserves the number of particles if $N_{v}$ is odd, total momentum if $N_{v}$ is even and $u^{s}=0$ for all species $s$, and total energy if $N_{v}$ is odd and $u^{s} = 0$ for all species $s$. Thus, the SW formulation conserves either total momentum or particle number and total energy, requiring $u^{s} = 0$ for all $s$.

\begin{theorem}[SW formulation particle number conservation]\label{thm:mass-conservation-sw}
Let $f^{s}_{\mathrm{sw}}(x, v, t)$ be the solution of the semi-discrete Vlasov-Poisson equations~\eqref{vlasov-ode}-\eqref{posisson-ode-sw} solved via the SW formulation. Then, the number of particles of each plasma species $s$, i.e.
\begin{equation*}
    \mathcal{N}^{s}(t) \coloneqq \int_{0}^{\ell}\int_{\mathbb{R}} f^{s}_{\mathrm{sw}}(x, v, t) \mathrm{d}v \mathrm{d}x,
\end{equation*}
is conserved in time if $N_{v}$ is odd, where the drift rate is
\begin{alignat}{3}\label{change-in-mass-sw}
\frac{\mathrm{d}\mathcal{N}^{s}}{\mathrm{d}t}=
\begin{cases}
0 \qquad &\text{if } N_{v} \text{ is odd},\\
-\Delta x\frac{q^{s}}{m^{s}} \sqrt{\frac{N_{v}-1}{2}} \mathcal{I}_{N_{v}-2} \mathbf{E}(t)^{\top} \mathbf{C}^{s}_{N_v-1}(t) \qquad &\text{if } N_{v} \text{ is even.}\\
\end{cases}
\end{alignat}
\end{theorem}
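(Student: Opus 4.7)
The plan is to compute $\mathcal{N}^s(t)$ directly in terms of the semi-discrete expansion coefficients, differentiate in time, and show that almost all the resulting terms telescope to zero, leaving a single boundary term in spectral index $n = N_v - 1$. First, I would substitute the SW ansatz~\eqref{expansion-seperation-of-variables} into the definition of $\mathcal{N}^s(t)$ and use the change of variables $v \mapsto \xi^s$ together with the integral $\mathcal{I}_n$ from~\eqref{f-poisson-pde} to write
\begin{equation*}
\mathcal{N}^s(t) = \alpha^s \sum_{n=0}^{N_v-1} \mathcal{I}_n \int_0^\ell C_n^s(x,t)\, \mathrm{d}x \; \approx\; \alpha^s \Delta x \sum_{n=0}^{N_v-1} \mathcal{I}_n\, \mathbf{1}^\top \mathbf{C}_n^s(t),
\end{equation*}
using the trapezoidal rule in space.

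Next, I would differentiate in time and substitute the semi-discrete evolution equation~\eqref{vlasov-ode} (equivalently the coefficient-wise PDE~\eqref{f-vlasov-pde}). The advection block $\partial_x Q_n^s$ collapses because $\mathbf{1}^\top \mathbf{D}^p = 0$: every row of the periodic centered-difference operator~\eqref{central-finite-differencing} has zero sum, by construction of the stencil coefficients. This kills all three terms multiplied by $\mathbf{D}^p$, so only the electric-field term survives, giving
\begin{equation*}
\frac{\mathrm{d}\mathcal{N}^s}{\mathrm{d}t} = -\Delta x\, \frac{q^s}{m^s} \sum_{n=0}^{N_v-1} \mathcal{I}_n\, \mathbf{E}(t)^\top\!\left(-\sqrt{\tfrac{n}{2}}\, \mathbf{C}_{n-1}^s(t) + \sqrt{\tfrac{n+1}{2}}\, \mathbf{C}_{n+1}^s(t)\right),
\end{equation*}
where I used that pointwise multiplication by $\mathbf{E}$ converts $\mathbf{1}^\top(\mathbf{E} \odot \mathbf{v})$ into $\mathbf{E}^\top \mathbf{v}$.

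The crux is to show that this sum telescopes. I would reindex the two halves by $m = n-1$ and $m = n+1$, apply the closure $\mathbf{C}_{N_v}^s = \mathbf{C}_{-1}^s = 0$, and then use the recursion $\mathcal{I}_{m+1}\sqrt{(m+1)/2} = \mathcal{I}_{m-1}\sqrt{m/2}$, which is exactly~\eqref{recursive-relation-I0} rewritten. The $m = 0$ contribution vanishes because $\mathcal{I}_1 = 0$; for $1 \le m \le N_v - 2$ the two reindexed contributions cancel by the recursion; and only the $m = N_v - 1$ term survives with coefficient $\mathcal{I}_{N_v - 2}\sqrt{(N_v-1)/2}$. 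This yields the drift rate stated in~\eqref{change-in-mass-sw}.

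Finally, the parity dichotomy follows from $\mathcal{I}_n = 0$ for odd $n$, a direct consequence of the recursion $\mathcal{I}_n = \sqrt{(n-1)/n}\,\mathcal{I}_{n-2}$ with $\mathcal{I}_1 = 0$: when $N_v$ is odd, $\mathcal{I}_{N_v - 2}$ has odd index and vanishes, giving exact conservation; when $N_v$ is even, $\mathcal{I}_{N_v - 2}$ is nonzero and the drift expression remains. I expect the main obstacle to be bookkeeping of the index shifts and the endpoint terms at $m = 0$ and $m = N_v - 1$, since the recursion only applies for $n \ge 2$ and the cancellation must be tracked carefully to identify precisely which single term fails to cancel.
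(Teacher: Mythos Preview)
Your proposal is correct and follows essentially the same route as the paper's proof: express $\mathcal{N}^s$ via the coefficients and $\mathcal{I}_n$, kill the $\mathbf{D}^p$ terms using $\mathbf{1}^\top \mathbf{D}^p = 0$, and telescope the remaining electric-field sum via the recursion~\eqref{recursive-relation-I0} to isolate the single boundary term at $n=N_v-1$. One minor wording nit: $\mathbf{1}^\top \mathbf{D}^p = 0$ is a statement about \emph{column} sums, not row sums, though for the circulant (and anti-symmetric) operator~\eqref{central-finite-differencing} both vanish and the distinction is immaterial.
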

\begin{proof}
The number of particles of each plasma species $s$, $\mathcal{N}^{s}(t)$, in the SW formulation is given by 
\begin{equation}\label{mass-sw-definition}
    \mathcal{N}^{s}(t) \coloneqq \int_{0}^{\ell}\int_{\mathbb{R}} f^{s}_{\mathrm{sw}}(x, v, t) \mathrm{d}v \mathrm{d}x = \Delta x \alpha^{s} \sum_{n=0}^{N_{v}-1} \mathcal{I}_{n} \mathbf{1}^{\top} \mathbf{C}_{n}^{s}(t),
\end{equation}
where $\mathbf{1} \in \mathbb{R}^{N_{x}}$ is a column vector of all ones and $\mathcal{I}_{n}$ is defined in Eq.~\eqref{recursive-relation-I0}. The derivative in time of the particle number is 
\begin{alignat*}{3}
    \frac{\mathrm{d}\mathcal{N}^{s}}{\mathrm{d}t} &= \Delta x\alpha^{s} \sum_{n=0}^{N_{v}-1}\mathcal{I}_{n} \mathbf{1}^{\top} \frac{\mathrm{d} \mathbf{C}_{n}^{s}(t)}{\mathrm{d} t} \\
    &= \Delta x \sum_{n=0}^{N_{v}-1} \mathcal{I}_{n} \left[-\alpha^{s} \underbrace{\mathbf{1}^{\top}\mathbf{D}^{p} \mathbf{Q}^{s}_{n}(t)}_{=0}+
    \frac{q^{s}}{m^{s}} \mathbf{E}(t)^{\top} \left(\sqrt{\frac{n}{2}} \mathbf{C}^{s}_{n-1}(t) -\sqrt{\frac{n+1}{2}} \mathbf{C}_{n+1}^{s}(t) \right)\right]  \Hquad &&\text{from Eq. }\eqref{vlasov-ode}\\
    &= \Delta x\frac{q^{s}}{m^{s}} \mathbf{E}(t)^{\top} \sum_{n=0}^{N_{v}-1} \mathcal{I}_{n}  \left(\sqrt{\frac{n}{2}} \mathbf{C}^{s}_{n-1}(t) -\sqrt{\frac{n+1}{2}} \mathbf{C}_{n+1}^{s}(t) \right) \Hquad &&\text{from Eqns. }\eqref{anti-commutative}~\&~\eqref{central-finite-differencing}
\end{alignat*}
where $\mathbf{Q}_{n}^{s}(t) = \alpha^{s} \sqrt{\frac{n}{2}} \mathbf{C}_{n-1}^{s}(t) + \alpha^{s} \sqrt{\frac{n+1}{2}} \mathbf{C}_{n+1}^{s}(t) + u^{s} \mathbf{C}_{n}^{s}(t)$.
Since the summation over $n$ is telescopic and from the recurrence relation of $\mathcal{I}_{n}$ in Eq.~\eqref{recursive-relation-I0}, we obtain Eq.~\eqref{change-in-mass-sw}. 
\end{proof}

\begin{theorem}[SW formulation total momentum conservation]\label{change-in-momentum-thm} Let $f^{s}_{\mathrm{sw}}(x, v, t)$ be the solution of the semi-discrete Vlasov-Poisson equations~\eqref{vlasov-ode}-\eqref{posisson-ode-sw} solved via the SW formulation. Then, the total momentum, $\mathcal{P}(t)$, defined as
\begin{equation*}
    \mathcal{P}(t) \coloneqq \sum_{s} m^{s} \int_{0}^{\ell}\int_{\mathbb{R}} v f^{s}_{\mathrm{sw}}(x, v, t) \mathrm{d}v \mathrm{d}x,
\end{equation*}
is conserved in time if $N_{v}$ is even and $u^{s}=0$ for all $s$, where the drift rate is
\begin{equation}\label{change-in-momentum-sw}
\frac{\mathrm{d}\mathcal{P}}{\mathrm{d}t} = \begin{cases}
-\Delta x N_v\mathcal{I}_{N_v-1} \mathbf{E}(t)^{\top} \sum_{s} \alpha^{s} q^{s}\mathbf{C}_{N_v-1}^{s}(t)\qquad &\text{if } N_{v} \text{ is odd},\\
 -\Delta x \sqrt{\frac{N_v-1}{2}} \mathcal{I}_{N_v-2} \mathbf{E}(t)^{\top} \sum_{s} u^{s} q^{s} \mathbf{C}_{N_v-1}^{s}(t)\qquad&\text{if } N_{v} \text{ is even}.\\
\end{cases}
\end{equation}
\end{theorem}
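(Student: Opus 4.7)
The plan is to parallel the template used in Theorem~\ref{thm:mass-conservation-sw}: represent $\mathcal{P}(t)$ as a linear functional of the spectral coefficients $\{\mathbf{C}^s_n\}$, differentiate in time, exploit $\mathbf{1}^\top \mathbf{D}^p = \mathbf{0}^\top$ to discard the divergence part, and telescope the remaining electric-force contribution. First, I would compute $\int_\mathbb{R} v\,\psi_n(\xi^s)\,\mathrm{d}v$ using identity~\eqref{identity-1}, then apply the recursion~\eqref{recursive-relation-I0} (which gives $\sqrt{(n+1)/2}\,\mathcal{I}_{n+1} = \sqrt{n/2}\,\mathcal{I}_{n-1}$ for $n\geq 1$) to collapse the result to $(\alpha^s)^2 \sqrt{2n}\,\mathcal{I}_{n-1} + \alpha^s u^s \mathcal{I}_n$. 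Trapezoidal quadrature in space then yields the representation $\mathcal{P}(t) = \Delta x \sum_s m^s \sum_{n=0}^{N_v-1} [(\alpha^s)^2\sqrt{2n}\,\mathcal{I}_{n-1} + \alpha^s u^s \mathcal{I}_n]\,\mathbf{1}^\top \mathbf{C}^s_n(t)$ that is ready to differentiate.

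Next, I would differentiate in time and substitute $\mathbf{1}^\top \frac{\mathrm{d}\mathbf{C}^s_n}{\mathrm{d}t}$ from Eq.~\eqref{vlasov-ode}, with the divergence term vanishing exactly as in the mass-conservation proof via $\mathbf{1}^\top \mathbf{D}^p = \mathbf{0}^\top$. What remains splits into two sub-sums, one weighted by $u^s$ and one by $\alpha^s$. The $u^s$-sub-sum is structurally identical to the telescoping sum in Theorem~\ref{thm:mass-conservation-sw} and, after a single index shift and one application of~\eqref{recursive-relation-I0}, reduces to the boundary contribution $-\Delta x \sqrt{(N_v-1)/2}\,\mathcal{I}_{N_v-2}\,\mathbf{E}(t)^\top \sum_s u^s q^s \mathbf{C}^s_{N_v-1}(t)$.

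The main obstacle is the $\alpha^s$-sub-sum. Two stride-$1$ index shifts are required, and after applying~\eqref{recursive-relation-I0} to each shifted sum the coefficient in front of $\mathcal{I}_k\,\mathbf{E}^\top \mathbf{C}^s_k$ simplifies to unity for every interior index $k$, producing what looks superficially like an $O(N_v)$ interior residual. The crucial observation is that this residual reassembles into $\Delta x\, \mathbf{E}(t)^\top \sum_s q^s \alpha^s \sum_{k=0}^{N_v-1} \mathcal{I}_k \mathbf{C}^s_k(t)$, which by the semi-discrete Gauss law~\eqref{posisson-ode-sw} equals $\Delta x\, \mathbf{E}(t)^\top \mathbf{D}^p \mathbf{E}(t)$, and this quadratic form vanishes by the square-norm property~\eqref{preserves-square-norms} of the anti-symmetric operator $\mathbf{D}^p$. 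Only the lone boundary contribution at $k = N_v-1$ survives, giving $-\Delta x\, N_v \mathcal{I}_{N_v-1}\,\mathbf{E}(t)^\top \sum_s q^s \alpha^s \mathbf{C}^s_{N_v-1}(t)$.

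Finally, combining the two surviving boundary terms and invoking the fact that $\mathcal{I}_n = 0$ for every odd $n$ (iterate $\mathcal{I}_1 = 0$ through~\eqref{recursive-relation-I0}), at most one of $\mathcal{I}_{N_v-1}$ and $\mathcal{I}_{N_v-2}$ is nonzero, so precisely one term survives depending on the parity of $N_v$, reproducing~\eqref{change-in-momentum-sw}. Exact momentum conservation requires $N_v$ even (to kill the $\alpha^s$-boundary via $\mathcal{I}_{N_v-1}=0$) together with $u^s = 0$ for all $s$ (to kill the remaining $u^s$-boundary). The anti-symmetry-driven cancellation that reduces the naively $O(N_v)$ interior residual to zero is the heart of the argument and also the reason that centered differencing on a uniform spatial grid---rather than a one-sided or non-uniform stencil---is essential to the result.
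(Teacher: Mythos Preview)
Your proposal is correct and follows essentially the same approach as the paper. The only organizational difference is that the paper splits the summation by parity of the Hermite index (even versus odd $n$), whereas you split by the coefficient structure ($u^{s}\mathcal{I}_{n}$ versus $\alpha^{s}\sqrt{2n}\,\mathcal{I}_{n-1}$); these decompositions are equivalent because $\mathcal{I}_{n}=0$ for all odd $n$, and both routes isolate the same key cancellation $\mathbf{E}^{\top}\mathbf{D}^{p}\mathbf{E}=0$ via the semi-discrete Poisson equation and the anti-symmetry of $\mathbf{D}^{p}$.
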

\begin{proof}
The total momentum $\mathcal{P}(t)$ in the SW formulation is given by 
\begin{equation}\label{momentum-sw-definition}
    \mathcal{P}(t)\coloneqq \sum_{s} m^{s} \int_{0}^{\ell}\int_{\mathbb{R}} v f^{s}_{\mathrm{sw}}(x, v, t) \mathrm{d}v \mathrm{d}x =  \Delta x\sum_{s} m^{s} \alpha^{s} \sum_{n=0}^{N_v-1} \mathcal{I}_{n}^{1} \mathbf{1}^{\top} \mathbf{C}_{n}^{s}(t),
\end{equation}
where the integral $\mathcal{I}^{1}_{n}$ is defined as
\begin{equation*}
\mathcal{I}^{1}_{n} \coloneqq \int_{\mathbb{R}} v \psi_{n}(\xi^{s}) \mathrm{d}\xi^{s} = \begin{cases}
\alpha^{s}\sqrt{\frac{n+1}{2}} \mathcal{I}_{n+1} + \alpha^{s} \sqrt{\frac{n}{2}} \mathcal{I}_{n-1} \qquad &\text{if } n \geq 1\text{ and odd,}\\
u^{s}\mathcal{I}_{n} \qquad &\text{if } n\geq 0 \text{ and even.}
\end{cases}
\end{equation*}
The derivative in time of the total momentum is
\begin{alignat}{3}\label{change-in-momentum-sw-proof}
    \frac{\mathrm{d}\mathcal{P}}{\mathrm{d}t} &= \Delta x\sum_{s} m^{s} \alpha^{s} \sum_{n=0}^{N_v-1} \mathcal{I}_{n}^{1} \mathbf{1}^{\top} \frac{\mathrm{d} \mathbf{C}_{n}^{s}(t)}{\mathrm{d} t}   \\
    &= \Delta x \sum_{s} \sum_{n=0}^{N_{v}-1} \mathcal{I}_{n}^{1} \left[-m^{s} \alpha^{s} \underbrace{\mathbf{1}^{\top}\mathbf{D}^{p} \mathbf{Q}^{s}_{n}(t)}_{=0}+
    q^{s} \mathbf{E}(t)^{\top} \left(\sqrt{\frac{n}{2}} \mathbf{C}^{s}_{n-1}(t) -\sqrt{\frac{n+1}{2}} \mathbf{C}_{n+1}^{s}(t) \right)\right]  \Hquad &&\text{from Eq. }\eqref{vlasov-ode}\nonumber\\
    &= \Delta x \mathbf{E}(t)^{\top} \sum_{s} q^{s} \sum_{n=0}^{N_{v}-1} \mathcal{I}_{n}^{1}  \left(\sqrt{\frac{n}{2}} \mathbf{C}^{s}_{n-1}(t) -\sqrt{\frac{n+1}{2}} \mathbf{C}_{n+1}^{s}(t) \right) \Hquad &&\text{from Eqns. }\eqref{anti-commutative}~\&~\eqref{central-finite-differencing}.\nonumber
\end{alignat}
We divide the summation into even and odd indices and use the recursive relation of $\mathcal{I}_{n}$ from Eq.~\eqref{recursive-relation-I0}, such that the even indices result in
\begin{equation}\label{even-indicies}
    \sum_{\substack{n=0 \\ n\text{ is even}}}^{N_{v}-1} \mathcal{I}_{n}^{1} \left( \sqrt{\frac{n}{2}} \mathbf{C}_{n-1}^{s}(t)-\sqrt{\frac{n+1}{2}} \mathbf{C}_{n+1}^{s}(t) \right)  =  \begin{cases}
        0 \qquad &\text{if } N_{v} \text{ is odd,}\\
        -\sqrt{\frac{N_v-1}{2}} u^{s} \mathcal{I}_{N_v-2} \mathbf{C}_{N_v-1}(t) &\text{if } N_v \text{ is even,}
    \end{cases}
\end{equation}
and the odd indices result in 
\begin{align}\label{odd-indicies}
    \sum_{\substack{n=1 \\ n\text{ is odd}}}^{N_{v}-1} \mathcal{I}_{n}^{1} \left( \sqrt{\frac{n}{2}} \mathbf{C}_{n-1}^{s}(t)-\sqrt{\frac{n+1}{2}} \mathbf{C}_{n+1}^{s}(t) \right) &=  \sum_{\substack{n=1 \\ \text{n is odd}}}^{N_{v}-1} \alpha^{s} \mathcal{I}_{n-1} \mathbf{C}_{n-1}^{s}(t)\\
    &+\begin{cases}
        -(N_{v}-1)\mathcal{I}_{N_v-1} \alpha^{s} \mathbf{C}^{s}_{N_v-1}(t) \qquad &\text{if } N_{v} \text{ is odd,}\\
        0 &\text{if } N_v \text{ is even.} \nonumber
    \end{cases}
\end{align}
Thus, by inserting Eqns.~\eqref{even-indicies} and \eqref{odd-indicies} into Eq.~\eqref{change-in-momentum-sw-proof}, we get
\begin{align*}
    \frac{\mathrm{d}\mathcal{P}}{\mathrm{d}t} &= \Delta x \mathbf{E}(t)^{\top}\sum_{s} q^{s}\alpha^{s} \sum_{\substack{n=1 \\ n\text{ is odd}}}^{N_{v}-1} \mathcal{I}_{n-1} \mathbf{C}_{n-1}^{s}(t)\\
    &+ \begin{cases}
        -\Delta x (N_{v}-1) \mathcal{I}_{N_v-1}  \mathbf{E}(t)^{\top} \sum_{s} q^{s} \alpha^{s} \mathbf{C}^{s}_{N_v-1}(t) \qquad &\text{if } N_{v} \text{ is odd,}\\
        -\Delta x \sqrt{\frac{N_v-1}{2}} \mathcal{I}_{N_v-2}\mathbf{E}(t)^{\top} \sum_{s} q^{s} u^{s}\mathbf{C}^{s}_{N_v-1}(t) &\text{if } N_v \text{ is even.} \nonumber
    \end{cases}
\end{align*}
The first term in the expression above is equal to zero from the Poisson equation~\eqref{posisson-ode-sw}, the recursive relation of $\mathcal{I}_{n}$ in Eq.~\eqref{recursive-relation-I0}, and the anti-symmetry properties~\eqref{preserves-square-norms} of the centered finite differencing derivative operator $(\mathbb{R}^{N_{x}}, \mathbf{D}^{p})$ defined in Eq.~\eqref{central-finite-differencing}, i.e. 
\begin{equation*}
    \Delta x \mathbf{E}(t)^{\top}\sum_{s} q^{s}\alpha^{s} \sum_{\substack{n=1 \\ n\text{ is odd}}}^{N_{v}-1} \mathcal{I}_{n-1} \mathbf{C}_{n-1}^{s}(t) = \begin{cases}
      -\Delta x \mathcal{I}_{N_v -1} \mathbf{E}(t)^{\top} \sum_{s} q^s \alpha^{s} \mathbf{C}^{s}_{N_v -1}(t)&\text{if } N_v \text{ is odd,}\\
     \Delta x \mathbf{E}(t)^{\top} \mathbf{D}^{p} \mathbf{E}(t) = 0 &\text{if } N_v \text{ is even,}
    \end{cases}
\end{equation*}
which implies that Eq.~\eqref{change-in-momentum-sw} holds. 
\end{proof}

\begin{theorem}[SW formulation total energy conservation] Let $f^{s}_{\mathrm{sw}}(x, v, t)$ be the solution of the semi-discrete Vlasov-Poisson equations~\eqref{vlasov-ode}-\eqref{posisson-ode-sw} solved via the SW formulation. Consider the total energy $\mathcal{E}(t) \coloneqq \mathcal{E}_{\mathrm{kin}}(t) + \mathcal{E}_{\mathrm{pot}}(t)$, where 
\begin{equation*}
     \mathcal{E}_{\mathrm{kin}}(t) \coloneqq \frac{1}{2} \sum_{s} m^{s} \int_{0}^{\ell} \int_{\mathbb{R}} v^{2} f_{\mathrm{sw}}^{s}(x, v, t) \mathrm{d}v \mathrm{d}x \qquad \mathrm{and} \qquad \mathcal{E}_{\mathrm{pot}}(t) \coloneqq \frac{1}{2} \int_{0}^{\ell}E(x, t)^2 \mathrm{d}x.
\end{equation*}
The total energy $\mathcal{E}(t)$ is conserved in time if $N_{v}$ is odd and $u^{s}=0$ for all $s$, such that the drift rate is
\begin{equation}\label{change-in-total-energy-sw}
\frac{\mathrm{d}\mathcal{E}}{\mathrm{d}t} = \begin{cases}
-\Delta xN_{v}\mathcal{I}_{N_v-1} \mathbf{E}(t)^{\top} \sum_{s} u^{s} \alpha^{s}q^{s} \mathbf{C}^{s}_{N_v-1}(t)\qquad &\text{if } N_{v} \text{ is odd},\\
-\Delta x \sqrt{\frac{N_v-1}{2}} \mathcal{I}_{N_v-2} \mathbf{E}(t)^{\top} \sum_{s} q^{s} \left(\frac{(2N_v -1) (\alpha^{s})^2 + (u^s)^2}{2}+ \frac{q^{s}}{m^{s}} (\mathbf{D}^{p})^{\dagger}\mathbf{E}(t) \odot \right) \mathbf{C}^{s}_{N_v-1}(t) &\text{if } N_{v} \text{ is even,}
\end{cases}
\end{equation}
where $(\mathbf{D}^{p})^{\dagger} \in \mathbb{R}^{N_{x} \times N_{x}}$ is the pseudoinverse of the finite difference operator $\mathbf{D}^{p}$ defined in Eq.~\eqref{central-finite-differencing}.
\end{theorem}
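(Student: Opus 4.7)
The plan is to mirror the structure of Theorems~\ref{thm:mass-conservation-sw} and~\ref{change-in-momentum-thm}: split $\frac{\mathrm{d}\mathcal{E}}{\mathrm{d}t} = \frac{\mathrm{d}\mathcal{E}_{\mathrm{kin}}}{\mathrm{d}t} + \frac{\mathrm{d}\mathcal{E}_{\mathrm{pot}}}{\mathrm{d}t}$, substitute the semi-discrete Vlasov equation~\eqref{vlasov-ode} into each piece, and use the recurrence~\eqref{recursive-relation-I0} to telescope the bulk of the sums, leaving only boundary contributions at the closure $n=N_v-1$. Conceptually, the continuum identity that the power gained by the particles equals $\int EJ\,\mathrm{d}x$, balanced by $\frac{\mathrm{d}\mathcal{E}_{\mathrm{pot}}}{\mathrm{d}t} = -\int EJ\,\mathrm{d}x$, is being replicated at the semi-discrete level, with the Hermite truncation as the only source of obstruction.

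First, I would expand the kinetic energy using identity~\eqref{identity-2}, defining
\begin{equation*}
\mathcal{I}^{2}_n \coloneqq \int_{\mathbb{R}} v^{2}\,\psi_n(\xi^s)\,\mathrm{d}\xi^s,
\end{equation*}
which is an explicit linear combination of $\mathcal{I}_{n\pm2}$, $\mathcal{I}_{n\pm1}$, $\mathcal{I}_n$ whose coefficients mix $u^s$ and $(\alpha^s)^2$. This gives $\mathcal{E}_{\mathrm{kin}}(t) = \tfrac{\Delta x}{2}\sum_s m^s \alpha^s \sum_n \mathcal{I}^{2}_n \mathbf{1}^\top \mathbf{C}^s_n(t)$. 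Differentiating in time and substituting~\eqref{vlasov-ode} annihilates the advection term by $\mathbf{1}^\top \mathbf{D}^p = 0$, exactly as in the mass and momentum proofs, leaving an $\mathbf{E}(t)^\top(\cdots)$ expression involving $\mathbf{C}^s_{n\pm 1}$ with $\mathcal{I}^{2}_n$ as weights.

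Second, I would handle the potential energy by writing $\mathcal{E}_{\mathrm{pot}}(t) = \tfrac{\Delta x}{2}\mathbf{E}(t)^\top\mathbf{E}(t)$ and differentiating Eq.~\eqref{posisson-ode-sw} in time. The gauge $\mathbf{1}^\top\mathbf{E}(t) = 0$ and the anti-symmetry of $\mathbf{D}^p$ imply $\mathbf{E}(t)\in\range{\mathbf{D}^p}$, so
\begin{equation*}
\frac{\mathrm{d}\mathbf{E}}{\mathrm{d}t} = (\mathbf{D}^p)^\dagger \sum_s q^s \alpha^s \sum_{n=0}^{N_v-1}\mathcal{I}_n \frac{\mathrm{d}\mathbf{C}^s_n}{\mathrm{d}t}.
\end{equation*}
Substituting~\eqref{vlasov-ode}, the advection part $-(\mathbf{D}^p)^\dagger \mathbf{D}^p \sum_n \mathcal{I}_n \mathbf{Q}^s_n$ collapses, after pre-multiplication by $\mathbf{E}(t)^\top$, into the telescoping sum already encountered in Theorems~\ref{thm:mass-conservation-sw}--\ref{change-in-momentum-thm}, while the $\mathbf{E}(t)\odot(\cdots)$ piece is precisely what produces the residual factor $(\mathbf{D}^p)^\dagger \mathbf{E}(t)$ that appears in the even-$N_v$ branch of Eq.~\eqref{change-in-total-energy-sw}.

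Finally, I would add the kinetic and potential contributions, re-index so that every Hermite index carries a single prefactor, and use~\eqref{recursive-relation-I0} to telescope; splitting into even and odd indices exactly as in Eqns.~\eqref{even-indicies}--\eqref{odd-indicies} leaves only the closure terms at $n = N_v-1$ (with neighbour $n = N_v-2$), reproducing the two branches of Eq.~\eqref{change-in-total-energy-sw}. In the odd-$N_v$ case the surviving factor is proportional to $u^s$, so the hypothesis $u^s = 0$ closes the argument; in the even-$N_v$ case the $(2N_v-1)(\alpha^s)^2/2$ and $(\mathbf{D}^p)^\dagger \mathbf{E}(t)$ pieces are irreducible, which is why the theorem is restricted to odd $N_v$. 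The main obstacle will be the pseudoinverse: unlike in the mass and momentum proofs where $\mathbf{D}^p$ appears naked and $\mathbf{1}^\top \mathbf{D}^p = 0$ kills the advection term immediately, here one must carefully use $\mathbf{E}(t)^\top (\mathbf{D}^p)^\dagger \mathbf{D}^p = \mathbf{E}(t)^\top$ (valid because $\mathbf{E}(t) \in \range{\mathbf{D}^p}$) together with the anti-symmetry of $(\mathbf{D}^p)^\dagger$. Combined with the heavier bookkeeping of $\mathcal{I}^{2}_n$, which entangles three orders of $\mathcal{I}_\cdot$ with both $u^s$ and $(\alpha^s)^2$, the telescoping must be performed very carefully to identify precisely the two terms that survive.
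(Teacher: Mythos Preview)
Your proposal is correct and follows essentially the same route as the paper's proof: both express $\mathcal{E}_{\mathrm{kin}}$ through the second-moment integrals $\mathcal{I}^2_n$, kill the advection term via $\mathbf{1}^\top\mathbf{D}^p=0$, differentiate the semi-discrete Poisson equation to handle $\mathcal{E}_{\mathrm{pot}}$, and telescope using~\eqref{recursive-relation-I0} to isolate the closure residuals. The only cosmetic difference is that the paper makes the cancellation explicit by identifying the discrete current $\mathbf{J}(t)=\sum_s q^s\alpha^s\sum_n\mathcal{I}_n\mathbf{Q}^s_n$ so that $\tfrac{\mathrm{d}\mathcal{E}_{\mathrm{kin}}}{\mathrm{d}t}=\Delta x\,\mathbf{E}^\top\mathbf{J}+(\text{boundary})$ and $\tfrac{\mathrm{d}\mathcal{E}_{\mathrm{pot}}}{\mathrm{d}t}=-\Delta x\,\mathbf{E}^\top\mathbf{J}+(\text{boundary})$, whereas you phrase the same step through the projection identity $\mathbf{E}^\top(\mathbf{D}^p)^\dagger\mathbf{D}^p=\mathbf{E}^\top$.
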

\begin{proof}
The kinetic energy $\mathcal{E}_{\mathrm{kin}}(t)$ in the SW formulation is given by 
\begin{equation}\label{kinetic-energy-sw-definition}
    \mathcal{E}_{\mathrm{kin}}(t) \coloneqq \frac{1}{2} \sum_{s} m^{s} \int_{0}^{\ell} \int_{\mathbb{R}} v^{2} f_{\mathrm{sw}}^{s}(x, v, t) \mathrm{d}v \mathrm{d}x = \frac{\Delta x}{2}\sum_{s} \alpha^{s} m^{s} \sum_{n=0}^{N_v-1} \mathcal{I}_{n}^{2} \mathbf{1}^{\top}  \mathbf{C}_{n}^{s}(t), 
\end{equation}
where the integral $\mathcal{I}^{2}_{n}$ is defined as 
\begin{alignat*}{3}
    \mathcal{I}_{n}^{2} \coloneqq \begin{cases}
        2\alpha^s u^s \left( \sqrt{\frac{n+1}{2}} \mathcal{I}_{n+1} + \sqrt{\frac{n}{2}} \mathcal{I}_{n-1}\right) = 2u^{s} \mathcal{I}_{n}^{1} \qquad &\text{if } n\geq 1 \text{ and odd,}\\
        (\alpha^{s})^2 \left(\sqrt{\frac{(n+1)(n+2)}{4}} \mathcal{I}_{n+2} + \left( \frac{2n+1}{2} + \left(\frac{u^s}{\alpha^s}\right)^2\right) \mathcal{I}_{n} + \sqrt{\frac{n(n-1)}{4}} \mathcal{I}_{n-2}\right)\qquad &\text{if }n\geq 0 \text{ and even.}
    \end{cases}
\end{alignat*}
The derivative in time of the kinetic energy is
\begin{alignat}{3}
    \frac{\mathrm{d}\mathcal{E}_{\mathrm{kin}}}{\mathrm{d}t} &= \frac{\Delta x}{2}\sum_{s} \alpha^{s} m^{s} \sum_{n=0}^{N_v-1} \mathcal{I}_{n}^{2} \mathbf{1}^{\top} \frac{\mathrm{d} \mathbf{C}_{n}^{s}(t)}{\mathrm{d} t}\nonumber \\
    &= \frac{\Delta x}{2} \sum_{s} \sum_{n=0}^{N_{v}-1} \mathcal{I}_{n}^{2} \left[-m^{s} \alpha^{s} \underbrace{\mathbf{1}^{\top}\mathbf{D}^{p} \mathbf{Q}^{s}_{n}(t)}_{=0}+
    q^{s} \mathbf{E}(t)^{\top} \left(\sqrt{\frac{n}{2}} \mathbf{C}^{s}_{n-1}(t) -\sqrt{\frac{n+1}{2}} \mathbf{C}_{n+1}^{s}(t) \right)\right]  \Hquad &&\text{from Eq. }\eqref{vlasov-ode}\nonumber\\
    &= \frac{\Delta x}{2} \mathbf{E}(t)^{\top} \sum_{s} q^{s} \sum_{n=0}^{N_{v}-1} \mathcal{I}_{n}^{2}  \left(\sqrt{\frac{n}{2}} \mathbf{C}^{s}_{n-1}(t) -\sqrt{\frac{n+1}{2}} \mathbf{C}_{n+1}^{s}(t) \right) \Hquad &&\text{from Eqns. }\eqref{anti-commutative}~\&~\eqref{central-finite-differencing} \nonumber\\
    &= \Delta x \mathbf{E}(t)^{\top}\mathbf{J}(t) \label{kinetic-energy-drift}\\
&+\begin{cases}
    -\Delta x N_{v}\mathcal{I}_{N_v-1} \mathbf{E}(t)^{\top} \sum_{s} u^{s} \alpha^{s} q^{s} \mathbf{C}^{s}_{N_v-1}(t) \qquad &\text{if } N_{v} \text{ is odd,}\\
    -\frac{\Delta x}{2} \sqrt{\frac{N_v-1}{2}}\mathcal{I}_{N_v-2}\mathbf{E}(t)^{\top} \sum_{s} q^{s}\left((2N_{v}-1)(\alpha^{s})^2 + (u^s)^2 \right) \mathbf{C}_{N_v-1}^{s}(t)&\text{if } N_v \text{ is even,} 
    \end{cases} \nonumber
\end{alignat}
where $\mathbf{J}(t) = \sum_{s} q^{s} \alpha^{s} \sum_{n=0}^{N_v-1} \mathcal{I}_{n}  \mathbf{Q}_{n}^{s}(t)$ is the discretized electric current.
The potential energy $\mathcal{E}_{\mathrm{pot}}(t)$ in the SW formulation is given by
\begin{equation}\label{pot-energy-sw-definition}
     \mathcal{E}_{\mathrm{pot}}(t) = \frac{1}{2} \int_{0}^{\ell} E(x, t)^2 \mathrm{d}x = \frac{\Delta x}{2} \mathbf{E}(t)^{\top}\mathbf{E}(t),
\end{equation}
and its time derivative is
\begin{equation}\label{change-in-potential-sw-1}
    \frac{\mathrm{d}\mathcal{E}_{\mathrm{pot}}}{\mathrm{d}t} =\Delta x \mathbf{E}(t)^{\top} \frac{\mathrm{d} \mathbf{E}(t)}{\mathrm{d}t }.
\end{equation}
By taking the time derivative of the semi-discrete Poisson equation~\eqref{f-poisson-pde}, we get 
\begin{alignat*}{3}
    \mathbf{D}^{p}\frac{\mathrm{d} \mathbf{E}(t)}{\mathrm{d}t } &= \sum_{s} q^{s} \alpha^{s} \sum_{n=0}^{N_v-1} \mathcal{I}_{n} \frac{\mathrm{d}\mathbf{C}_{n}^{s}(t)}{\mathrm{d}t}\\
    &= -\sum_{s} q^{s} \alpha^{s} \sum_{n=0}^{N_v-1} \mathcal{I}_{n} \left( \mathbf{D}^{p} \mathbf{Q}_{n}^{s}(t) + \frac{q^{s}}{m^{s}\alpha^{s}}\mathbf{E}(t) \odot \left(\sqrt{\frac{n}{2}} \mathbf{C}_{n-1}^{s}(t) -\sqrt{\frac{n+1}{2}} \mathbf{C}_{n+1}^{s}(t)\right)\right) \Hquad &&\text{from Eq. }\eqref{vlasov-ode}\\
    &= -\mathbf{D}^{p}\left(\sum_{s} q^{s} \alpha^{s} \sum_{n=0}^{N_v-1} \mathcal{I}_{n}  \mathbf{Q}_{n}^{s}(t) \right)  + \begin{cases}
0 \qquad &\text{if } N_{v} \text{ is odd},\\
-\sum_{s} \frac{(q^{s})^2}{m^{s}} \sqrt{\frac{N_{v}-1}{2}} \mathcal{I}_{N_{v}-2} \mathbf{E}(t) \odot \mathbf{C}^{s}_{N_v-1}(t) \qquad &\text{if } N_{v} \text{ is even.}\\
\end{cases} \\
    &= -\mathbf{D}^{p}\mathbf{J}(t) + \begin{cases}
0 \qquad &\text{if } N_{v} \text{ is odd},\\
-\sum_{s} \frac{(q^{s})^2}{m^{s}} \sqrt{\frac{N_{v}-1}{2}} \mathcal{I}_{N_{v}-2} \mathbf{E}(t) \odot \mathbf{C}^{s}_{N_v-1}(t) \qquad &\text{if } N_{v} \text{ is even,}\\
\end{cases}
\end{alignat*}
where $\odot$ denotes the Hadamard (element-wise) product. Thus, 
\begin{equation*}
    \frac{\mathrm{d} \mathbf{E}(t)}{\mathrm{d}t } = -\mathbf{J}(t) + c\mathbf{1} + \begin{cases}
0 \qquad &\text{if } N_{v} \text{ is odd},\\
-(\mathbf{D}^{p})^{\dagger} \sum_{s} \frac{(q^{s})^2}{m^{s}} \sqrt{\frac{N_{v}-1}{2}} \mathcal{I}_{N_{v}-2} \mathbf{E}(t) \odot \mathbf{C}^{s}_{N_v-1}(t) \qquad &\text{if } N_{v} \text{ is even.}\\
\end{cases}
\end{equation*}
where $c\in \mathbb{R}$ is an arbitrary constant and $(\mathbf{D}^{p})^{\dagger} \in \mathbb{R}^{N_{x} \times N_{x}}$ is the pseudoinverse of the finite difference operator $\mathbf{D}^{p}$ defined in Eq.~\eqref{central-finite-differencing}, and Eq.~\eqref{change-in-potential-sw-1} becomes
\begin{equation}\label{change-in-potential-sw-2}
    \frac{\mathrm{d}\mathcal{E}_{\mathrm{pot}}}{\mathrm{d}t} = \Delta x \mathbf{E}(t)^{\top} \frac{\mathrm{d} \mathbf{E}(t)}{\mathrm{d}t } = -\Delta x \mathbf{E}(t)^{\top} \mathbf{J}(t) + \begin{cases}
0 \qquad &\text{if } N_{v} \text{ is odd},\\
-\Delta x \mathbf{E}(t)^{\top}(\mathbf{D}^{p})^{\dagger}\sum_{s} \frac{(q^{s})^2}{m^{s}} \sqrt{\frac{N_{v}-1}{2}} \mathcal{I}_{N_{v}-2} \mathbf{E}(t) \odot \mathbf{C}^{s}_{N_v-1}(t) \qquad &\text{if } N_{v} \text{ is even.}\\
\end{cases}
\end{equation}
Summing up Eqns.~\eqref{kinetic-energy-drift} and \eqref{change-in-potential-sw-2} results in Eq.~\eqref{change-in-total-energy-sw}.
\end{proof}
\subsection{Symmetrically Weighted Square-Root Formulation Conservation Properties}\label{sec:SW-sqrt-conservation-properties}
We show below that the SW square-root formulation conserves the number of particles without restrictions on the number of velocity spectral terms $N_v$ or velocity coordinate scaling and shifting parameters. However, the total momentum and energy are not conserved in time.
\begin{theorem}[SW square-root formulation particle number conservation]\label{thm:mass-conservation-sw-sqrt}
Let $f^{s}_{\mathrm{swsr}}(x, v, t)$ be the solution of the semi-discrete Vlasov-Poisson equations~\eqref{vlasov-ode} and~\eqref{poisson-ode-sw-sqrt} solved via the SW square-root formulation. Then, the number of particles of each plasma species $s$, i.e. 
\begin{equation*}
    \mathcal{N}^{s}(t) \coloneqq \int_{0}^{\ell}\int_{\mathbb{R}} f^{s}_{\mathrm{swsr}}(x, v, t) \mathrm{d}v \mathrm{d}x,
\end{equation*}
is conserved in time, such that $\frac{\mathrm{d}\mathcal{N}^{s}}{\mathrm{d}t}=0$.
\end{theorem}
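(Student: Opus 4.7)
The plan is to show that, in the SW square-root formulation, the particle number $\mathcal{N}^{s}(t)$ is (up to a factor of $\alpha^{s}\Delta x$) exactly the squared $L^{2}$-norm of the semi-discrete state vector $\mathbf{\Psi}^{s}(t)$, and then invoke the anti-symmetric invariant result from Section~\ref{sec:stability} to conclude that it is conserved in time. This is the key structural advantage of the square-root formulation: the Hermite expansion coefficients directly encode the particle density as a sum of squares.

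First, I would substitute the square-root spectral ansatz~\eqref{spectral-approximation-sqrt} into the definition of $\mathcal{N}^{s}(t)$, so that
\begin{equation*}
\mathcal{N}^{s}(t) = \int_{0}^{\ell}\int_{\mathbb{R}}\left(\sum_{n=0}^{N_v-1}C_{n}^{s}(x,t)\psi_{n}(\xi^{s})\right)^{2}\mathrm{d}v\,\mathrm{d}x.
\end{equation*}
Expanding the square and using the change of variables $\mathrm{d}v=\alpha^{s}\mathrm{d}\xi^{s}$ together with the orthogonality relation~\eqref{orthogonal-property} collapses the velocity integral to $\alpha^{s}\sum_{n}C_{n}^{s}(x,t)^{2}$. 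Applying the trapezoidal rule to the remaining spatial integral, and recalling the block structure of $\mathbf{\Psi}^{s}(t)$ from~\eqref{psi-state-definition}, yields
\begin{equation*}
\mathcal{N}^{s}(t) \;=\; \Delta x\,\alpha^{s}\sum_{n=0}^{N_v-1}\mathbf{C}_{n}^{s}(t)^{\top}\mathbf{C}_{n}^{s}(t) \;=\; \Delta x\,\alpha^{s}\,\mathbf{\Psi}^{s}(t)^{\top}\mathbf{\Psi}^{s}(t).
\end{equation*}

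Second, I would differentiate this identity in time and appeal to the semi-discrete Vlasov equation~\eqref{vlasov-ode}, $\tfrac{\mathrm{d}}{\mathrm{d}t}\mathbf{\Psi}^{s}=\mathbf{A}^{s}(t)\mathbf{\Psi}^{s}$, where $(\mathbb{R}^{N_vN_x},\mathbf{A}^{s}(t))$ was shown to be anti-symmetric in Section~\ref{sec:anti-symmetric-x}. By Proposition~\ref{thm:anti-symmetric-invariants}, $\mathbf{\Psi}^{s}(t)^{\top}\mathbf{\Psi}^{s}(t)$ is an invariant of the ODE, hence $\mathrm{d}\mathcal{N}^{s}/\mathrm{d}t=0$.

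Since $\alpha^{s}$ and $\Delta x$ are constants and the result follows directly from the anti-symmetry of $\mathbf{A}^{s}(t)$, there is no real obstacle nor any parity restriction on $N_{v}$ or constraint on $u^{s}$ — which is precisely the contrast with the SW case (Theorem~\ref{thm:mass-conservation-sw}) being highlighted in Table~\ref{tab:SW_vs_SW_sqrt}. The only subtlety worth flagging in the write-up is that this argument relies on the closure-by-truncation $C_{N_v}^{s}(x,t)=0$ to guarantee anti-symmetry of $\mathbf{A}^{s}(t)$; any alternative closure would generically break both the anti-symmetry and the conservation statement.
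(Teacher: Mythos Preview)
Your proposal is correct and follows essentially the same approach as the paper: you identify $\mathcal{N}^{s}(t)=\Delta x\,\alpha^{s}\,\mathbf{\Psi}^{s}(t)^{\top}\mathbf{\Psi}^{s}(t)$ via the orthogonality relation~\eqref{orthogonal-property} and the trapezoidal rule, and then invoke the anti-symmetry of $\mathbf{A}^{s}(t)$ together with Proposition~\ref{thm:anti-symmetric-invariants} to conclude conservation. Your added remark on the necessity of closure by truncation is accurate and consistent with the paper's discussion following Theorem~\ref{thm:theorem-velocity-antisymmetry}.
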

\begin{proof}
The number of particles of each plasma species $s$, $\mathcal{N}^{s}(t)$, in the SW square-root formulation is given by
\begin{equation}\label{mass-sw-sqrt-definition}
    \mathcal{N}^{s}(t) \coloneqq \int_{0}^{\ell}\int_{\mathbb{R}} f^{s}_{\mathrm{swsr}}(x, v, t) \mathrm{d}v \mathrm{d}x =  \Delta x \alpha^{s} \sum_{n=0}^{N_v-1} \mathbf{C}_{n}^{s}(t)^{\top} \mathbf{C}_{n}^{s}(t) = \Delta x \alpha^{s} \mathbf{\Psi}^{s}(t)^{\top} \mathbf{\Psi}^{s}(t).
\end{equation}
From the anti-symmetric property of the advection operator $\mathbf{A}^{s}(t) \in \mathbb{R}^{N_{v}N_{x} \times N_{v}N_{x}}$ in semi-discrete form, see Eq.~\eqref{vlasov-ode} and Proposition~\ref{thm:anti-symmetric-invariants}, the quadratic term $\mathbf{\Psi}^{s}(t)^{\top} \mathbf{\Psi}^{s}(t)$ is an invariant of the semi-discrete equations. Thus, the particle number $\mathcal{N}^{s}(t)$ of species $s$ is conserved in the SW square-root formulation.
\end{proof}

\begin{theorem}[SW square-root formulation total momentum conservation]\label{thm:momentum-conservation-sw-sqrt} Let $f^{s}_{\mathrm{swsr}}(x, v, t)$ be the solution of the semi-discrete Vlasov-Poisson equations~\eqref{vlasov-ode} and~\eqref{poisson-ode-sw-sqrt} solved via the SW square-root formulation. Then, the total momentum, $\mathcal{P}(t)$, which is defined as
\begin{equation*}
    \mathcal{P}(t) \coloneqq \sum_{s} m^{s} \int_{0}^{\ell}\int_{\mathbb{R}} v f^{s}_{\mathrm{swsr}}(x, v, t) \mathrm{d}v \mathrm{d}x,
\end{equation*}
is not conserved in time, where the drift rate is 
\begin{equation}\label{drift-in-momentum-sw-sqrt}
    \frac{\mathrm{d} \mathcal{P}}{\mathrm{d} t} = -\Delta x N_{v} \mathbf{E}(t)^{\top}  \sum_{s} q^{s} \alpha^{s}\mathbf{C}_{N_{v} -1}^{s}(t)^2
\end{equation}
\end{theorem}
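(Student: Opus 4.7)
The plan is to mirror the argument used for the SW-formulation momentum analysis in Theorem~\ref{change-in-momentum-thm}, adjusted for the quadratic square-root ansatz. First, I would substitute the ansatz~\eqref{spectral-approximation-sqrt} into the definition of $\mathcal{P}(t)$, expand the square $\bigl(\sum_n C_n^s \psi_n\bigr)^2$, and apply identity~\eqref{identity-1} together with the orthogonality relation~\eqref{orthogonal-property} to obtain the closed form
\begin{equation*}
\mathcal{P}(t) = \Delta x \sum_s m^s \alpha^s \sum_{n=0}^{N_v-1} (\mathbf{C}_n^s)^\top \mathbf{Q}_n^s(t),
\end{equation*}
with $\mathbf{Q}_n^s$ as in Eq.~\eqref{f-vlasov-pde}. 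The linear operator $\mathbf{\Psi}^s \mapsto [\mathbf{Q}_0^s, \ldots, \mathbf{Q}_{N_v-1}^s]^\top$ represents multiplication by $v$ in the orthonormal SW-Hermite basis and is block-tridiagonal and symmetric under the truncation closure $\mathbf{C}_{N_v}^s = 0$. Therefore the time derivative simplifies to
\begin{equation*}
\frac{\mathrm{d}\mathcal{P}}{\mathrm{d}t} = 2\Delta x \sum_s m^s \alpha^s \sum_{n=0}^{N_v-1} (\mathbf{Q}_n^s)^\top \frac{\mathrm{d}\mathbf{C}_n^s}{\mathrm{d}t}.
\end{equation*}

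Second, I would substitute the semi-discrete Vlasov equation~\eqref{vlasov-ode}. The advection piece contributes $-2\Delta x \sum_s m^s \alpha^s \sum_n (\mathbf{Q}_n^s)^\top \mathbf{D}^p \mathbf{Q}_n^s$, which vanishes identically by anti-symmetry of $\mathbf{D}^p$ via property~\eqref{preserves-square-norms}. After using the Hadamard identity $\mathbf{a}^\top(\mathbf{b}\odot\mathbf{c}) = \mathbf{b}^\top(\mathbf{a}\odot\mathbf{c})$ to extract $\mathbf{E}(t)$, what remains is
\begin{equation*}
\frac{\mathrm{d}\mathcal{P}}{\mathrm{d}t} = 2\Delta x\, \mathbf{E}(t)^\top \sum_s q^s \sum_{n=0}^{N_v-1} \mathbf{Q}_n^s \odot \left(\sqrt{\tfrac{n}{2}}\,\mathbf{C}_{n-1}^s - \sqrt{\tfrac{n+1}{2}}\,\mathbf{C}_{n+1}^s\right).
\end{equation*}

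Third, I would expand $\mathbf{Q}_n^s$ via its definition and exploit the algebraic identity $(\alpha a + \alpha b + u c)(a-b) = \alpha(a^2 - b^2) + u c(a-b)$ with $a=\sqrt{n/2}\,\mathbf{C}_{n-1}^s$, $b=\sqrt{(n+1)/2}\,\mathbf{C}_{n+1}^s$, and $c=\mathbf{C}_n^s$. A single reindexing $n \mapsto n-1$ shows that the $u^s$ cross terms cancel exactly, exploiting $\mathbf{C}_{-1}^s = \mathbf{C}_{N_v}^s = 0$, while the remaining $\alpha^s$ pieces telescope to $\tfrac{1}{2}\sum_n \mathbf{C}_n^s(t)^2 - \tfrac{N_v}{2}\mathbf{C}_{N_v-1}^s(t)^2$, the isolated boundary contribution arising from the truncation. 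Invoking the square-root Poisson equation~\eqref{poisson-ode-sw-sqrt} identifies $\sum_s q^s\alpha^s \sum_n \mathbf{C}_n^s(t)^2 = \mathbf{D}^p \mathbf{E}(t)$, so the bulk contribution becomes $\Delta x\, \mathbf{E}(t)^\top \mathbf{D}^p \mathbf{E}(t)$, which is again zero by anti-symmetry of $\mathbf{D}^p$. Only the truncation boundary term survives, yielding the stated drift rate~\eqref{drift-in-momentum-sw-sqrt}.

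The main obstacle is the careful bookkeeping in the telescoping step: the $u^s$ and $\alpha^s$ contributions telescope differently and both rely on $\mathbf{C}_{-1}^s = \mathbf{C}_{N_v}^s = 0$ to generate the $N_v\,\mathbf{C}_{N_v-1}^s(t)^2$ remainder. One must reindex the shifted sums consistently and verify that the bulk residual exactly matches the right-hand side of~\eqref{poisson-ode-sw-sqrt}, so that it can be absorbed into $\mathbf{E}^\top \mathbf{D}^p \mathbf{E} = 0$; this is precisely what makes the final drift depend solely on the highest retained coefficient.
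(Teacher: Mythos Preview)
Your proposal is correct and follows the same overall logic as the paper: express $\mathcal{P}(t)$ in terms of the coefficients, differentiate, substitute the semi-discrete Vlasov equation~\eqref{vlasov-ode}, use anti-symmetry of $\mathbf{D}^p$ to remove advection contributions, telescope the electric-field terms, and invoke the square-root Poisson equation~\eqref{poisson-ode-sw-sqrt} so that the bulk contribution $\mathbf{E}^\top\mathbf{D}^p\mathbf{E}$ vanishes, leaving only the truncation remainder.

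The organization differs in a useful way. The paper writes $\mathcal{P}(t)$ by separating the $u^s$ and $\alpha^s$ contributions from the start; the $u^s$ piece is $u^s\mathbf{\Psi}^{s\top}\mathbf{\Psi}^s$ and is dispatched immediately by Proposition~\ref{thm:anti-symmetric-invariants}, while the $\alpha^s$ cross terms are handled by substituting~\eqref{vlasov-ode} into $\frac{\mathrm{d}}{\mathrm{d}t}\bigl(\mathbf{C}_{n-1}^{s\top}\mathbf{C}_n^s\bigr)$ and telescoping, with the $\mathbf{D}^p$ cancellations absorbed into that telescoping step. Your route instead keeps the two pieces together via $\mathcal{P}=\Delta x\sum_s m^s\alpha^s\,\mathbf{\Psi}^{s\top}\mathbf{V}^s\mathbf{\Psi}^s$, where $\mathbf{V}^s$ is the symmetric tridiagonal ``multiplication by $v$'' operator whose action is $\mathbf{\Psi}^s\mapsto[\mathbf{Q}_0^s,\dots,\mathbf{Q}_{N_v-1}^s]^\top$. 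This buys a structural one-line cancellation of all advection terms at once, $\sum_n\mathbf{Q}_n^{s\top}\mathbf{D}^p\mathbf{Q}_n^s=0$, and it also makes transparent why the $u^s$ cross terms vanish by a single reindex. The paper's split has the minor advantage that the $u^s$ part is eliminated before any substitution, but your packaging is more economical and makes the role of the symmetric velocity operator explicit.
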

\begin{proof}
The total momentum $\mathcal{P}(t)$ in the SW square-root formulation is given by 
\begin{align}\label{momentum-sw-sqrt-definition}
    \mathcal{P}(t) &\coloneqq \sum_{s} m^{s}\int_{0}^{\ell}\int_{\mathbb{R}} v f^{s}_{\mathrm{swsr}}(x, v, t)\mathrm{d}v \mathrm{d} x\\
    &=\Delta x \sum_{s} m^{s} \alpha^{s} 
    \sum_{n=0}^{N_v-1} \left(u^{s} \mathbf{C}_{n}^{s}(t)^{\top}\mathbf{C}_{n}^{s}(t) +  2\alpha^{s} \sqrt{\frac{n}{2}} \mathbf{C}_{n-1}^{s}(t)^{\top} \mathbf{C}_{n}^{s}(t)\right).
    \nonumber
\end{align}
The time derivative of the total momentum is 
\begin{align*}
    \frac{\mathrm{d}\mathcal{P}}{\mathrm{d}t}&=\Delta x \sum_{s} m^{s} \alpha^{s} \sum_{n=0}^{N_v-1} \frac{\mathrm{d}}{\mathrm{d}t}\left(u^{s} \mathbf{C}_{n}^{s}(t)^{\top}\mathbf{C}_{n}^{s}(t) +  2\alpha^{s} \sqrt{\frac{n}{2}} \mathbf{C}_{n-1}^{s}(t)^{\top} \mathbf{C}_{n}^{s}(t)\right)\\
    &=\Delta x \sum_{s} m^{s} \alpha^{s} \left(2u^{s} \underbrace{\frac{\mathrm{d}}{\mathrm{d}t} \mathbf{\Psi}^{s}(t)^{\top}\mathbf{\Psi}^{s}(t)}_{=0 \text{ from Proposition}~\ref{thm:anti-symmetric-invariants}} + 2 \alpha^{s} \sum_{n=0}^{N_v-1}  \sqrt{\frac{n}{2}} \left(\mathbf{C}_{n}^{s}(t)^{\top}\frac{\mathrm{d}}{\mathrm{d}t}\mathbf{C}_{n-1}^{s}(t)  + \mathbf{C}_{n-1}^{s}(t)^{\top} \frac{\mathrm{d}}{\mathrm{d}t}\mathbf{C}_{n}^{s}(t) \right)\right).
\end{align*}
From the semi-discrete form of the Vlasov equation~\eqref{vlasov-ode}, we get that 
\begin{align*}
    \sum_{n=0}^{N_v -1}  \sqrt{\frac{n}{2}} \mathbf{C}_{n}^{s}(t)^{\top}\frac{\mathrm{d}}{\mathrm{d}t}\mathbf{C}_{n-1}^{s}(t)  &= \sum_{n=0}^{N_v -1}  -\sqrt{\frac{n}{2}} \mathbf{C}_{n}^{s}(t)^{\top} \mathbf{D}^{p} \left(\alpha^{s} \sqrt{\frac{n-1}{2}} \mathbf{C}_{n-2}^{s}(t) + \alpha^{s} \sqrt{\frac{n}{2}}\mathbf{C}_{n}^{s}(t)  + u^{s} \mathbf{C}_{n-1}^{s}(t)\right)  \\
    &+ \sum_{n=0}^{N_v-1} \frac{q^{s}}{m^s \alpha^{s}} \mathbf{E}(t)^{\top} \left(\frac{\sqrt{n(n-1)}}{2} \mathbf{C}_{n-2}^{s}(t) \odot \mathbf{C}_{n}^{s}(t) - \frac{n}{2} \mathbf{C}_{n}^{s}(t)^2 \right), \\
    \sum_{n=0}^{N_v -1}  \sqrt{\frac{n}{2}} \mathbf{C}_{n-1}^{s}(t)^{\top}\frac{\mathrm{d}}{\mathrm{d}t}\mathbf{C}_{n}^{s}(t) &= \sum_{n=0}^{N_v -1}  -\sqrt{\frac{n}{2}}  \mathbf{C}_{n-1}^{s}(t)^{\top} \mathbf{D}^{p} \left(\alpha^{s} \sqrt{\frac{n}{2}} \mathbf{C}_{n-1}^{s}(t) + \alpha^{s} \sqrt{\frac{n+1}{2}}\mathbf{C}_{n+1}^{s}(t)  + u^{s} \mathbf{C}_{n}^{s}(t)\right) \\
    &+ \sum_{n=0}^{N_v -1} \frac{q^{s}}{m^s \alpha^{s}} \mathbf{E}(t)^{\top} \left(\frac{n}{2} \mathbf{C}_{n-1}^{s}(t)^2  -\frac{\sqrt{n(n+1)}}{2} \mathbf{C}_{n-1}^{s}(t) \odot \mathbf{C}_{n+1}^{s}(t) \right).
\end{align*}
The telescoping sum results in 
\begin{align*}
    \sum_{n=0}^{N_v-1} \sqrt{\frac{n}{2}} \left(\mathbf{C}_{n}^{s}(t)^{\top}\frac{\mathrm{d}}{\mathrm{d}t}\mathbf{C}_{n-1}^{s}(t) + \mathbf{C}_{n-1}^{s}(t)^{\top} \frac{\mathrm{d}}{\mathrm{d}t}\mathbf{C}_{n}^{s}(t) \right) &=  \frac{q^{s}}{2m^s \alpha^{s}} \mathbf{E}(t)^{\top} \left( \left(\sum_{n=0}^{{N_v -1}} \mathbf{C}_{n}^{s}(t)^2 \right)- N_{v}\mathbf{C}_{N_{v}-1}^{s}(t)^2 \right).
\end{align*}
Thus, 
\begin{alignat*}{3}
\frac{\mathrm{d}\mathcal{P}}{\mathrm{d}t} &= \Delta x \mathbf{E}(t)^{\top} \underbrace{\sum_{s} q^{s} \alpha^{s} \sum_{n=0}^{N_v-1}\mathbf{C}_{n}^{s}(t)^2}_{=\mathbf{D}^{p} \mathbf{E}(t) \text{ from Eq.}~\eqref{poisson-ode-sw-sqrt}} -\Delta x N_v \mathbf{E}(t)^{\top}  \sum_{s} q^{s} \alpha^{s}\mathbf{C}_{N_{v} -1}^{s}(t)^2\\
&= -\Delta x N_v\mathbf{E}(t)^{\top}  \sum_{s} q^{s} \alpha^{s}\mathbf{C}_{N_{v} -1}^{s}(t)^2 \qquad &\text{from Eqns.}~\eqref{preserves-square-norms}~\&~\eqref{central-finite-differencing}.
\end{alignat*}
\end{proof}

\begin{theorem}[SW square-root formulation total energy conservation] Let $f^{s}_{\mathrm{swsr}}(x, v, t)$ be the solution of the semi-discrete Vlasov-Poisson equations~\eqref{vlasov-ode} and \eqref{poisson-ode-sw-sqrt} solved via the SW square-root formulation. Consider the total energy $\mathcal{E}(t) \coloneqq \mathcal{E}_{\mathrm{kin}}(t) + \mathcal{E}_{\mathrm{pot}}(t)$, where 
\begin{equation*}
     \mathcal{E}_{\mathrm{kin}}(t) \coloneqq \frac{1}{2} \sum_{s} m^{s} \int_{0}^{\ell} \int_{\mathbb{R}} v^{2} f_{\mathrm{swsr}}^{s}(x, v, t) \mathrm{d}v \mathrm{d}x \qquad \mathrm{and} \qquad \mathcal{E}_{\mathrm{pot}}(t) \coloneqq \frac{1}{2} \int_{0}^{\ell}E(x, t)^2 \mathrm{d}x.
\end{equation*}
The total energy $\mathcal{E}(t)$ is not conserved in time, where the drift rate is 
\begin{align}
    \frac{\mathrm{d}\mathcal{E}}{\mathrm{d}t} =&-\Delta x\frac{N_{v}}{2}\sqrt{\frac{N_{v}-1}{2}} \sum_{s} (\alpha^{s})^2 \left(m^{s}(\alpha^{s})^2\mathbf{C}_{N_v-1}^{s}(t)^{\top} \mathbf{D}^{p} \mathbf{C}_{N_v-2}^{s}(t) +q^{s}\mathbf{E}(t)^{\top} \left(\mathbf{C}_{N_v-2}^{s}(t) \odot \mathbf{C}_{N_v-1}^{s}(t)\right)\right)\nonumber \\
    &-\Delta x N_{v}\mathbf{E}(t)^{\top} \sum_{s} q^{s}u^{s}\alpha^{s} \mathbf{C}^{s}_{N_{v}-1}(t)^2 + \Delta x\mathbf{E}(t)^{\top} \sum_{s}\sum_{n=0}^{N_v-1} q^{s} \alpha^{s}\mathbf{C}_{n}^{s}(t) \odot \mathbf{Q}_{n}^{s}(t) \label{change-in-total-energy-sw-sqrt}\\
    &-2\Delta x \mathbf{E}(t)^{\top} (\mathbf{D}^{p})^{\dagger}\sum_{s} q^{s} \alpha^{s} \sum_{n=0}^{N_v-1} \mathbf{C}_{n}^{s}(t) \odot \mathbf{D}^{p}\mathbf{Q}_{n}^{s}(t). \nonumber
\end{align}
\end{theorem}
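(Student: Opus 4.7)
The strategy mirrors the SW total-energy proof but must accommodate that $\sqrt{f^{s}_{\mathrm{swsr}}}$---rather than $f^{s}_{\mathrm{swsr}}$ itself---is polynomial in the basis, which makes both the kinetic energy and Poisson's right-hand side quadratic in the coefficients $\mathbf{C}_{n}^{s}(t)$. The plan is to differentiate $\mathcal{E}_{\mathrm{kin}}$ and $\mathcal{E}_{\mathrm{pot}}$ in time, substitute from~\eqref{vlasov-ode} and~\eqref{poisson-ode-sw-sqrt}, identify the physical current contribution, and collect the surviving truncation residues.

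First I would express the kinetic energy as a quadratic form by inserting $\sqrt{f^{s}_{\mathrm{swsr}}} = \sum_{n} C_{n}^{s}\psi_{n}$ into $\int v^{2} f^{s}_{\mathrm{swsr}}\,\mathrm{d}v$ and applying identity~\eqref{identity-2} together with orthogonality~\eqref{orthogonal-property}. This yields a symmetric pentadiagonal quadratic form coupling $\mathbf{C}_{n}^{s}$ with $\mathbf{C}_{m}^{s}$ for $|n-m|\leq 2$. Differentiating in time and substituting~\eqref{vlasov-ode} for each $\mathrm{d}\mathbf{C}_{m}^{s}/\mathrm{d}t$ splits $\mathrm{d}\mathcal{E}_{\mathrm{kin}}/\mathrm{d}t$ into a convective piece ($\propto \mathbf{D}^{p}\mathbf{Q}_{m}^{s}$) and an electric-force piece. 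Anti-symmetry of $\mathbf{D}^{p}$ collapses the convective interior contributions and leaves a boundary residue near $n = N_{v}-1, N_{v}-2$, while the electric-force piece, re-summed via identity~\eqref{identity-1}, reconstructs the Joule heating $\Delta x\,\mathbf{E}(t)^{\top}\mathbf{J}(t)$, where $\mathbf{J}(t) = \sum_{s}q^{s}\alpha^{s}\sum_{n}\mathbf{C}_{n}^{s}\odot\mathbf{Q}_{n}^{s}$ is the discrete current obtained by inserting the square-root ansatz into $\int v f^{s}\mathrm{d}v$.

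For $\mathrm{d}\mathcal{E}_{\mathrm{pot}}/\mathrm{d}t$, I would differentiate the Poisson equation~\eqref{poisson-ode-sw-sqrt} in time to obtain
\[
\mathbf{D}^{p}\frac{\mathrm{d}\mathbf{E}}{\mathrm{d}t} = 2\sum_{s} q^{s}\alpha^{s}\sum_{n=0}^{N_{v}-1}\mathbf{C}_{n}^{s}\odot \frac{\mathrm{d}\mathbf{C}_{n}^{s}}{\mathrm{d}t},
\]
substitute~\eqref{vlasov-ode} again, and invert modulo the null space of $\mathbf{D}^{p}$ using the pseudoinverse $(\mathbf{D}^{p})^{\dagger}$; the constant-mode ambiguity is harmless since it contracts trivially with $\mathbf{E}$. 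Summing the two energy derivatives and telescoping via~\eqref{hermite-recursion-derivative}--\eqref{hermite-recursion} and anti-symmetry of $\mathbf{D}^{p}$ leaves the four contributions in~\eqref{change-in-total-energy-sw-sqrt}: the pentadiagonal truncation residue (first line), the $u^{s}$-shift boundary term $\Delta x N_{v}\mathbf{E}^{\top}\sum_{s} q^{s}u^{s}\alpha^{s}\mathbf{C}_{N_{v}-1}^{s}(t)^{2}$ (second line), the uncancelled Joule contribution $\Delta x\mathbf{E}^{\top}\mathbf{J}$ (third line), and the $(\mathbf{D}^{p})^{\dagger}$ residue generated by the quadratic Poisson right-hand side (fourth line).

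The main obstacle is the breakdown of the clean $\mathbf{E}^{\top}\mathbf{J}$ cancellation familiar from the SW proof: because the Poisson right-hand side is now quadratic in $\mathbf{C}_{n}^{s}$, differentiating it produces $\mathbf{C}_{n}^{s}\odot \mathrm{d}\mathbf{C}_{n}^{s}/\mathrm{d}t$ terms rather than a single $\mathrm{d}\mathbf{C}_{n}^{s}/\mathrm{d}t$, so $\mathrm{d}\mathbf{E}/\mathrm{d}t$ is \emph{not} simply $-\mathbf{J}+c\mathbf{1}$. Cubic products $\mathbf{C}_{n}^{s}\odot\mathbf{C}_{m}^{s}\odot\mathbf{E}$ arise from both $\mathrm{d}\mathcal{E}_{\mathrm{kin}}/\mathrm{d}t$ and $\mathrm{d}\mathcal{E}_{\mathrm{pot}}/\mathrm{d}t$, and the delicate step is to show---after invoking the Hermite recurrences and anti-symmetry of $\mathbf{D}^{p}$---that these re-collect exactly into the stated surviving terms rather than additional spurious drifts, and that everything of higher index than $N_{v}-1$ (which formally enters through the pentadiagonal coupling and through $\mathbf{Q}_{N_{v}-1}^{s}$) drops out by the closure-by-truncation convention $\mathbf{C}_{N_{v}}^{s}=0$.
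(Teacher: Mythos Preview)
Your proposal is correct and follows essentially the same route as the paper: express $\mathcal{E}_{\mathrm{kin}}$ as the pentadiagonal quadratic form via identity~\eqref{identity-2}, differentiate and substitute~\eqref{vlasov-ode}, use anti-symmetry of $\mathbf{D}^{p}$ and Proposition~\ref{thm:anti-symmetric-invariants} to isolate the truncation residues and the Joule term $\Delta x\,\mathbf{E}^{\top}\sum_{s}q^{s}\alpha^{s}\sum_{n}\mathbf{C}_{n}^{s}\odot\mathbf{Q}_{n}^{s}$, and obtain $\mathrm{d}\mathbf{E}/\mathrm{d}t$ by differentiating~\eqref{poisson-ode-sw-sqrt} and applying $(\mathbf{D}^{p})^{\dagger}$. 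One small sharpening: in the potential-energy computation the cubic $\mathbf{E}\odot\mathbf{C}_{n}^{s}\odot\mathbf{C}_{n\pm 1}^{s}$ terms telescope to zero \emph{exactly} (not merely re-collect), so $\mathbf{D}^{p}\,\mathrm{d}\mathbf{E}/\mathrm{d}t = -2\sum_{s}q^{s}\alpha^{s}\sum_{n}\mathbf{C}_{n}^{s}\odot\mathbf{D}^{p}\mathbf{Q}_{n}^{s}$ with no electric-field remainder; the failure of the $\mathbf{E}^{\top}\mathbf{J}$ cancellation then reduces precisely to the fact that $\mathbf{D}^{p}$ does not satisfy the discrete product rule, which you correctly flag as the crux.
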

\begin{proof}
The kinetic energy $\mathcal{E}_{\mathrm{kin}}(t)$ in the SW square-root formulation is given by
\begin{align}\label{kin-energy-sw-sqrt-definition}
    \mathcal{E}_{\mathrm{kin}}(t) &\coloneqq \frac{1}{2} \sum_{s} m^{s} \int_{0}^{\ell} \int_{\mathbb{R}} v^{2} f^{s}_{\mathrm{swsr}}(x, v, t)\mathrm{d}v\mathrm{d}x \\
    &= \Delta x\sum_{s} m^{s}\alpha^{s} \sum_{n=0}^{N_v-1} \frac{1}{2} \left((u^{s})^2 + (\alpha^s)^2 n + \frac{(\alpha^s)^2}{2}\right) \left(\mathbf{C}_{n}^{s}(t)^{\top}\mathbf{C}_{n}^{s}(t)\right) \nonumber\\
    &+ \sum_{n=0}^{N_v-1} 2 u^{s}\alpha^{s} \sqrt{\frac{n}{2}}\left(\mathbf{C}_{n}^{s}(t)^{\top}  \mathbf{C}_{n-1}^{s}(t)\right)+\frac{(\alpha^{s})^2}{2} \sum_{n=0}^{N_v-1} \sqrt{n(n-1)}  \left(\mathbf{C}_{n}^{s}(t)^{\top}  \mathbf{C}_{n-2}^s(t)\right). \nonumber
\end{align}
The derivative in time of the kinetic energy is
\begin{alignat*}{3}
    \frac{\mathrm{d}\mathcal{E}_{\mathrm{kin}}}{\mathrm{d}t}  &= \Delta x\sum_{s} m^{s}\alpha^{s} \sum_{n=0}^{N_v-1} \frac{1}{2} \left((u^{s})^2 + (\alpha^s)^2 n + \frac{(\alpha^s)^2}{2}\right) \frac{\mathrm{d}}{\mathrm{d}t}\left(\mathbf{C}_{n}^{s}(t)^{\top}\mathbf{C}_{n}^{s}(t)\right) \nonumber\\
    &+ \sum_{n=0}^{N_v-1} 2 u^{s}\alpha^{s} \sqrt{\frac{n}{2}} \frac{\mathrm{d}}{\mathrm{d}t}\left(\mathbf{C}_{n}^{s}(t)^{\top}  \mathbf{C}_{n-1}^{s}(t)\right)+\frac{(\alpha^{s})^2}{2} \sum_{n=0}^{N_v-1} \sqrt{n(n-1)}  \frac{\mathrm{d}}{\mathrm{d}t}\left(\mathbf{C}_{n}^{s}(t)^{\top}  \mathbf{C}_{n-2}^s(t)\right)\Hquad  && \text{from Eq.}~\eqref{identity-2}\\
    &= \Delta x\sum_{s} m^{s}\alpha^{s} \sum_{n=0}^{N_v-1} \frac{n(\alpha^s)^2}{2}   \frac{\mathrm{d}}{\mathrm{d}t}\left(\mathbf{C}_{n}^{s}(t)^{\top}\mathbf{C}_{n}^{s}(t)\right) +\underbrace{\left( \frac{(u^{s})^2}{2} + \frac{(\alpha^{s})^2}{4} \right) \frac{\mathrm{d}}{\mathrm{d}t} \mathbf{\Psi}^{s}(t)^{\top} \mathbf{\Psi}^{s}(t)}_{=0} && \text{from Proposition}~\ref{thm:anti-symmetric-invariants}\\
    &+ \sum_{n=0}^{N_v-1} 2 u^{s}\alpha^{s} \sqrt{\frac{n}{2}} \frac{\mathrm{d}}{\mathrm{d}t}\left(\mathbf{C}_{n}^{s}(t)^{\top}  \mathbf{C}_{n-1}^{s}(t)\right)+\frac{(\alpha^{s})^2}{2} \sum_{n=0}^{N_v-1} \sqrt{n(n-1)}  \frac{\mathrm{d}}{\mathrm{d}t}\left(\mathbf{C}_{n}^{s}(t)^{\top}  \mathbf{C}_{n-2}^s(t)\right).
\end{alignat*}
We expand each term in the expression above, such that 
\begin{align*}
    \sum_{n=0}^{N_v-1} \frac{n (\alpha^s)^2}{2} \frac{\mathrm{d}}{\mathrm{d}t}\left(\mathbf{C}_{n}^{s}(t)^{\top}\mathbf{C}_{n}^{s}(t) \right) &= \sum_{n=0}^{N_v-1} -(\alpha^{s})^3 \sqrt{\frac{n}{2}} \mathbf{C}_{n}^{s}(t)^{\top} \mathbf{D}^{p} \mathbf{C}^{s}_{n-1}+ \frac{q^{s}\alpha^{s}}{m^s}\sqrt{\frac{n}{2}}\mathbf{E}^{\top}\left( \mathbf{C}_{n-1}^{s}(t) \odot \mathbf{C}_{n}^{s}(t) \right), \\
    \sum_{n=0}^{N_v-1} 2 u^{s}\alpha^{s} \sqrt{\frac{n}{2}} \frac{\mathrm{d}}{\mathrm{d}t}\left(\mathbf{C}_{n}^{s}(t)^{\top}  \mathbf{C}_{n-1}^{s}(t)\right) &=  \frac{q^{s}u^{s}}{m^s}  \mathbf{E}(t)^{\top} \left(\left(\sum_{n=0}^{N_v-1} \mathbf{C}_{n}^{s}(t)^{2}\right) -N_{v} \mathbf{C}_{N_{v} -1}^{s}(t)^2 \right), \\
    \sum_{n=0}^{N_v-1} \frac{(\alpha^{s})^2}{2}  \sqrt{n(n-1)}\frac{\mathrm{d}}{\mathrm{d}t }\left(\mathbf{C}_{n}^{s}(t)^{\top}  \mathbf{C}_{n-2}^s(t)\right) &= \sum_{n=0}^{N_v-1} -(\alpha^{s})^3 \sqrt{\frac{n-1}{2}} \mathbf{C}^{s}_{n-2}(t)^{\top} \mathbf{D}^{p} \mathbf{C}_{n-1}^{s}(t) \\
    &+\sum_{n=0}^{N_v-1}  \frac{q^{s}\alpha^{s}}{m^s} \sqrt{\frac{n-1}{2}} \mathbf{E}(t)^{\top} \left(\mathbf{C}_{n-1}^{s}(t) \odot \mathbf{C}_{n-2}^{s}(t)\right)\\
    &-\frac{q^{s}\alpha^{s}}{2m^s}  (N_v-2)\sqrt{\frac{N_v-1}{2}}\mathbf{E}(t)^{\top} \left(\mathbf{C}_{N_v-2}^{s}(t) \odot \mathbf{C}_{N_v-1}^{s}(t)\right)\\
    &-\frac{(\alpha^{s})^3}{2} (N_v-2)\sqrt{\frac{N_v -1}{2}}\mathbf{C}_{N_v-1}^{s}(t)^{\top}\mathbf{D}^{p} \mathbf{C}_{N_v-2}^{s}(t).
\end{align*}
From the anti-commutative property of the anti-symmetric operator $(\mathbb{R}^{N_{x}}, \mathbf{D}^{p})$, see Eqns.~\eqref{anti-commutative} and~\eqref{central-finite-differencing}, we end up with 
\begin{align*}
    \frac{\mathrm{d}\mathcal{E}_{\mathrm{kin}}}{\mathrm{d}t} = &-\Delta x\frac{N_{v}}{2}\sqrt{\frac{N_{v}-1}{2}} \sum_{s} (\alpha^{s})^2 \left(m^{s}(\alpha^{s})^2\mathbf{C}_{N_v-1}^{s}(t)^{\top} \mathbf{D}^{p} \mathbf{C}_{N_v-2}^{s}(t) +q^{s}\mathbf{E}(t)^{\top} \left(\mathbf{C}_{N_v-2}^{s}(t) \odot \mathbf{C}_{N_v-1}^{s}(t)\right)\right)\\
    &-\Delta x N_{v}\mathbf{E}(t)^{\top} \sum_{s} q^{s}u^{s}\alpha^{s} \mathbf{C}^{s}_{N_{v}-1}(t)^2 + \Delta x\mathbf{E}(t)^{\top} \sum_{s}\sum_{n=0}^{N_v-1} q^{s} \alpha^{s}\mathbf{C}_{n}^{s}(t) \odot \mathbf{Q}_{n}^{s}(t).
\end{align*}
The potential energy $\mathcal{E}_{\mathrm{pot}}(t)$ is given by 
\begin{equation}\label{pot-energy-sw-sqrt-definition}
    \frac{\mathrm{d}\mathcal{E}_{\mathrm{pot}}}{\mathrm{d}t} \coloneqq \frac{1}{2} \int_{0}^{\ell} E(x, t)^2 \mathrm{d}x = \frac{\Delta x}{2} \mathbf{E}(t)^{\top} \mathbf{E}(t).
\end{equation}
The derivative in time of the potential energy is
\begin{equation*}
     \frac{\mathrm{d}\mathcal{E}_{\mathrm{pot}}}{\mathrm{d}t} =\Delta x \mathbf{E}(t)^{\top} \frac{\mathrm{d} \mathbf{E}(t)}{\mathrm{d}t }.
\end{equation*}
By taking the time derivative of the semi-discrete Poisson equation~\eqref{poisson-ode-sw-sqrt}, we get 
\begin{alignat*}{3}
    \mathbf{D}^{p}\frac{\mathrm{d} \mathbf{E}(t)}{\mathrm{d}t } &= \sum_{s} q^{s} \alpha^{s} \sum_{n=0}^{N_v-1} \frac{\mathrm{d}\mathbf{C}_{n}^{s}(t)^{2}}{\mathrm{d}t}\\
    &= -2\sum_{s} q^{s} \alpha^{s} \sum_{n=0}^{N_v-1} \mathbf{C}_{n}^{s}(t) \odot \left( \mathbf{D}^{p} \mathbf{Q}_{n}^{s}(t) + \frac{q^{s}}{m^{s}\alpha^{s}}\mathbf{E}(t)^{\top} \left(\sqrt{\frac{n}{2}} \mathbf{C}_{n-1}^{s}(t)\ -\sqrt{\frac{n+1}{2}} \mathbf{C}_{n+1}^{s}(t)\right)\right) \\
    &= -2\sum_{s} q^{s} \alpha^{s} \sum_{n=0}^{N_v-1} \mathbf{C}_{n}^{s}(t)  
    \odot \mathbf{D}^{p}\mathbf{Q}_{n}^{s}(t).
\end{alignat*}
Thus, 
\begin{equation}\label{ampere-derived}
    \frac{\mathrm{d} \mathbf{E}(t)}{\mathrm{d}t } = -2 (\mathbf{D}^{p})^{\dagger}\sum_{s} q^{s} \alpha^{s} \sum_{n=0}^{N_v-1} \mathbf{C}_{n}^{s}(t)  
    \odot \mathbf{D}^{p}\mathbf{Q}_{n}^{s}(t),
\end{equation}
resulting in Eq.~\eqref{change-in-total-energy-sw-sqrt}.
\end{proof}

\begin{remark}
    The finite difference operator $\mathbf{D}^{p} \in \mathbb{R}^{N_{x} \times N_{x}}$ defined in Eq.~\eqref{central-finite-differencing} does not preserve the product rule, i.e. $\mathbf{D}^{p} (\mathbf{y} \odot \mathbf{z}) \neq (\mathbf{D}^{p} \mathbf{y}) \odot \mathbf{z}  + \mathbf{y} \odot (\mathbf{D}^{p} \mathbf{z})$. Otherwise, the last two terms in the energy drift in Eq.~\eqref{change-in-total-energy-sw-sqrt} cancel out and the energy drift is a function of the electric field and the last two spectral coefficient terms. On the other hand, any spectral discretization preserves the product rule, such as the Fourier spectral discretization, see~\ref{sec:Appendix-A}.
\end{remark}

\section{Numerical Results} \label{sec:numerical-results}
We numerically investigate the accuracy, convergence, and conservation properties of the SW and SW square-root formulations. The numerical setup for the test cases is described in section~\ref{sec:numerical-setup}. We test the SW and SW square-root formulations on classical benchmark problems: (1)~manufactured solution in section~\ref{sec:manufactored-solution-results}, (2)~linear and nonlinear Landau damping in section~\ref{sec:landau-numerical-results}, (3)~two-stream instability in section~\ref{sec:two-stream-numerical-results}, (4)~bump-on-tail instability in section~\ref{sec:bump-on-tail-numerical-results}, and (5) ion-acoustic wave in section~\ref{sec:ion-acoustic-numerical-results}. 

\subsection{Numerical Setup}\label{sec:numerical-setup}
The number of particles, total momentum, and total energy are either linear or quadratic invariants of the system, see Eqns.~\eqref{mass-sw-definition}\eqref{momentum-sw-definition}\eqref{kinetic-energy-sw-definition}\eqref{pot-energy-sw-definition} for the SW formulation and Eqns.~\eqref{mass-sw-sqrt-definition}\eqref{momentum-sw-sqrt-definition}\eqref{kin-energy-sw-sqrt-definition}\eqref{pot-energy-sw-sqrt-definition} for the SW square-root formulation. These can be conserved at the fully discrete level using implicit Gauss-Legendre temporal integrators. We use the second-order implicit midpoint temporal integrator, which is part of the Gauss-Legendre family of integrators. 
The nonlinear system at each time step is solved using an unpreconditioned Jacobian-Free-Newton-Krylov (JFNK) method~\cite{knoll_jfnk_2004} implemented in Python's \texttt{scipy} package with relative tolerance set to $10^{-8}$ and absolute tolerance set to $10^{-14}$. The Krylov linear solver that is embedded in the JFNK method is the default linear solver restarted generalized minimum residual method (LGMRES)~\cite{baker_2005_lgmres} with relative and absolute tolerance set to $10^{-5}$.
The semi-discrete Poisson equations~\eqref{posisson-ode-sw}-\eqref{poisson-ode-sw-sqrt} are solved using the generalized minimal residual method (GMRES)~\cite{youcef_1986_gmres} with a relative tolerance and absolute tolerance of $10^{-15}$. 
Aside from the ion-acoustic wave test, we set the ions as a static neutralizing background since the ions are approximately motionless on the electron time scales. 
The number of grid points in space is set to $N_{x} = 100$ and the time step to $\Delta t = 10^{-2}$, unless specified otherwise. Additionally, we use a second-order central finite differencing derivative approximation in space. 
In contrast to asymmetrically-weighted solvers~\cite{camporeale_sps_2016, delzanno_2015_sps, koshkarov_2021_sps}, we do not consider an artificial collisional operator in this work.
The initial electron distribution for all test cases, aside from the manufactured solution and the ion acoustic wave, is represented by 
\begin{equation*}
    f^{e}(x, v, t=0) = \frac{n_{0}(1+\epsilon \cos(k x))}{\sqrt{\pi}}
    \begin{cases}
        \frac{1}{\sqrt{2}\alpha^{e}_{\mathrm{sw}}} \exp\left(-\frac{1}{2}\left(\frac{v - u^{e}}{\alpha^{e}_{\mathrm{sw}}}\right)^2\right), \qquad &\text{SW formulation}\\
        \frac{1}{\alpha^{e}_{\mathrm{swsr}}} \exp\left(-\left(\frac{v - u^{e}}{\alpha^{e}_{\mathrm{swsr}}}\right)^2\right) ,\qquad &\text{SW square-root formulation}
    \end{cases}
\end{equation*}
where $\epsilon$ is the amplitude of the initial perturbation, $n_{0}$ is the average density, $\alpha^{e}_{\mathrm{sw}}$ is the electron velocity scaling for the SW formulation, $\alpha^{e}_{\mathrm{swsr}}$ is the electron velocity scaling for the SW square-root formulation, $u^{e}$ is the electron velocity shifting, and $k$ is the perturbation's wavenumber. The parameters for each numerical test (aside from the manufactured solution test) are shown in Table~\ref{tab:set-up-initial-condition}. Moreover, we set a realistic ion-to-electron mass ratio $m^{i}/ m^{e}=1836$ (ions are treated as protons).

\noindent
\begin{table}
\caption{Parameter setup for the linear Landau damping, nonlinear Landau damping, two-stream instability, and bump-on-tail instability. The subscripts `sw' and `swsr' correspond to the SW and SW square-root formulations, respectively.}
\centering
\begin{tabular}{c |c |c |c |c}
\textbf{Parameter} & \textbf{Linear Landau} & \textbf{Nonlinear Landau} & \textbf{Two-stream} & \textbf{Bump-on-tail}\\
\hline
$\ell$ & $2\pi$ & $4\pi$ & $2\pi$ & $20\pi$\\ 
\hline
$k$ & $1$ & $0.5$ & $1$ & $0.3$\\
\hline
$\epsilon$ & $0.01$ & $0.5$ & $10^{-3}$ & $0.03$\\
\hline
$t_{f}$ & $10$ & $10$ & $45$ & $20$\\
\hline
$n_{0}$ & $1$ & $1$ & $0.5$ \& $0.5$ & $0.9$ \& $0.1$\\
\hline
$u^{e}$ & 0 & 0 & $-1$ \& $1$ & $0$ \& $4.5$\\
\hline
$\alpha^{e}_{\mathrm{sw}}$ & $1$& $1$& $ 0.5/\sqrt{2}$ \& $ 0.5/\sqrt{2}$& $1$ \& $0.5$\\
\hline
$\alpha^{e}_{\mathrm{swsr}}$ &$\sqrt{2}$ & $\sqrt{2}$ & $0.5$ \& $0.5$ & $\sqrt{2}$ \& $1/\sqrt{2}$\\
\end{tabular}
\label{tab:set-up-initial-condition}
\end{table}

\subsection{Manufactured Solution}\label{sec:manufactored-solution-results}
We test the accuracy of the SW and SW square-root formulations using the method of manufactured solutions. We consider the transport of uncharged particles corresponding to 
\begin{equation*}
    \left(\frac{\partial}{\partial t} + v \frac{\partial}{\partial x}\right) f^{m}(x,v,t) = 0 \qquad \text{or} \qquad \left(\frac{\partial}{\partial t} + v \frac{\partial}{\partial x}\right) \sqrt{f^{m}(x,v,t)} = 0 \qquad \text{in }\Omega_{x} \times \Omega_{v} \times \Omega_{t}.
\end{equation*}
We set $\ell=2\pi$, $t_{f}=1$, and the initial condition to 
\begin{equation*}
    f^{m}(x, v, t=0)= f_{0}^{m}(x, v) = \pi^{-\frac{1}{4}} (2-\cos(x))^2 \exp{\left(-\left(\frac{v-1}{2}\right)^2\right)}.
\end{equation*}
From the method of characteristics, the analytic solution to the linear advection equation is given by $f^{m}(x, v, t) = f_{0}^{m}(x-vt, v)$. We set $\Delta t=10^{-3}$ for both formulations. The initial condition for the SW formulation is parameterized with $\alpha_{\mathrm{sw}}^{m} = \sqrt{2}$, $u_{\mathrm{sw}}^{m}=1$, $\mathbf{C}^{m}_{0, \mathrm{sw}}(t=0) = \left(2 - \cos(\mathbf{x})\right)^2$, and $\mathbf{C}^{m}_{n, \mathrm{sw}}(t=0)= 0$ for $n =1, 2, \ldots,  N_{v}-1$, where $\mathbf{x} = [x^{(1)}, x^{(2)}, \ldots, x^{(N_{x})}]^{\top}\in \mathbb{R}^{N_{x}}$. Similarly, the SW square-root formulation is parameterized with $\alpha_{\mathrm{swsr}}^{m} = 2$, $u_{\mathrm{swsr}}^{m} = 1$, $\mathbf{C}^{m}_{0, \mathrm{swsr}}(t=0) = \pi^{\frac{1}{8}}(2-\cos(\mathbf{x}))$, and $\mathbf{C}^{m}_{n, \mathrm{swsr}}(t=0)= 0$ for $n = 1, 2, \ldots N_{v}-1$. The governing equations for the expansion coefficients in the SW and SW square-root formulations are of the same form, i.e. 
\begin{equation*}
    \frac{\mathrm{d}}{\mathrm{d}t}\mathbf{C}^{m}_{n}(t) = -\mathbf{D}^{p=2} \left(\alpha^{m}\sqrt{\frac{n}{2}} \mathbf{C}^{m}_{n-1}(t) + \alpha^{m}\sqrt{\frac{n+1}{2}} \mathbf{C}^{m}_{n+1}(t) + u^{m} \mathbf{C}^{m}_{n}(t)\right).
\end{equation*}

Figure~\ref{fig:manufactored-solution-relative-error} shows the SW and SW square-root numerical results at $t=1$, along with the relative error with $N_{v}=N_{x}=100$. The numerical results show that the two formulations are in good agreement with the analytic solution and have comparable accuracy. The relative error in the SW formulation indicates the interaction of higher-order modes in comparison to the SW square-root formulation. 
Figure~\ref{fig:convergence-plot-v} shows that indeed the SW formulation requires $\sim44$ Hermite modes to approximate the manufactured solution, whereas the SW square-root formulation only requires $\sim36$ modes to approximate the manufactured solution. 
Figure~\ref{fig:convergence-plot-x} presents the $L^{2}$-norm absolute error of the two formulations as a function of the number of spatial grid points $N_{x}$. The $L^{2}$-norm absolute error is computed using $10^{3}$ grid points uniformly spaced in the velocity direction from $v \in [-5, 5]$ with $N_{x} = 50, 100, 200, 400$. The spatial resolution convergence rate is $2$, which agrees with the analytic second-order central finite difference convergence. Figure~\ref{fig:convergence-plot-x} also shows that the SW square-root formulation is slightly more accurate than the SW formulation for the manufactured test case. 

\begin{figure}
    \centering
    \begin{subfigure}[b]{0.51\textwidth}
         \centering
         \caption{SW formulation}
         \includegraphics[width=\textwidth]{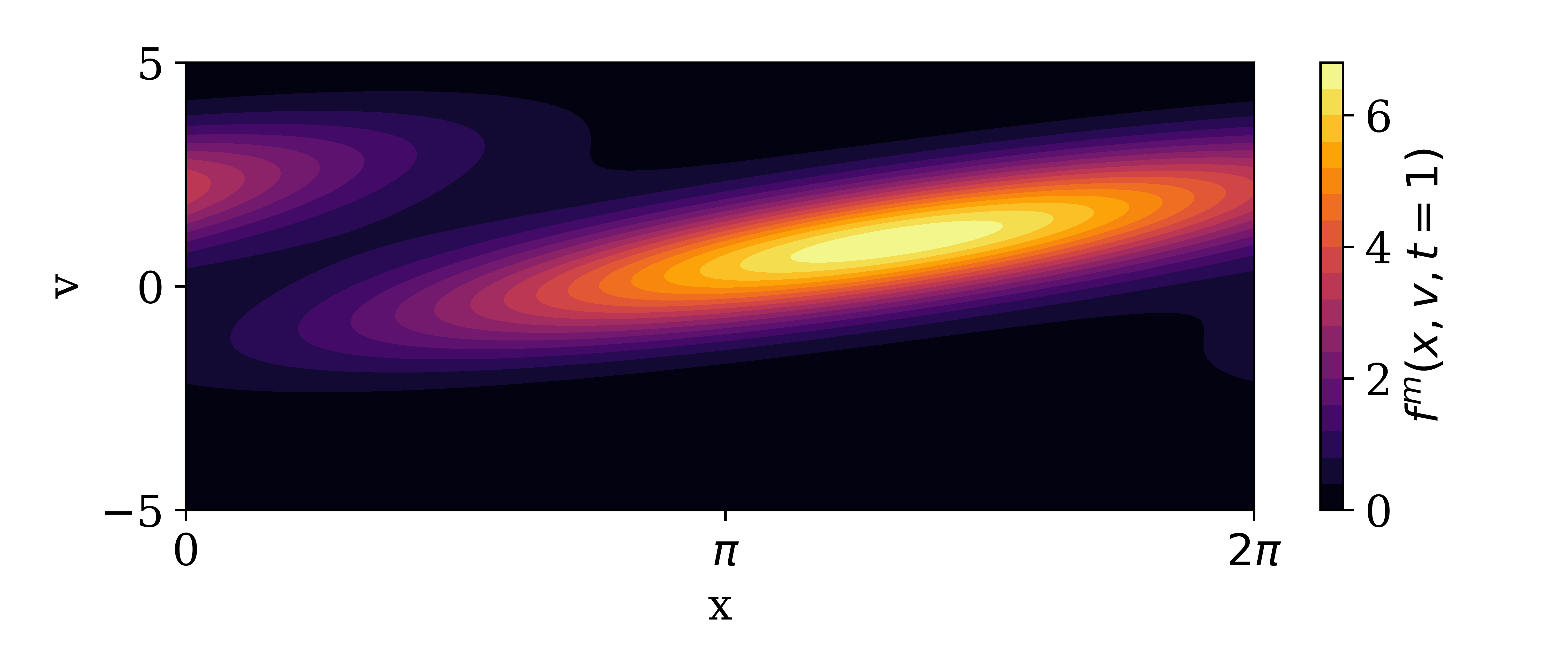}
         \label{fig:SW-manufactored}
     \end{subfigure}
     \hspace{-15pt}
    \begin{subfigure}[b]{0.51\textwidth}
         \centering
         \caption{SW square-root formulation}
         \includegraphics[width=\textwidth]{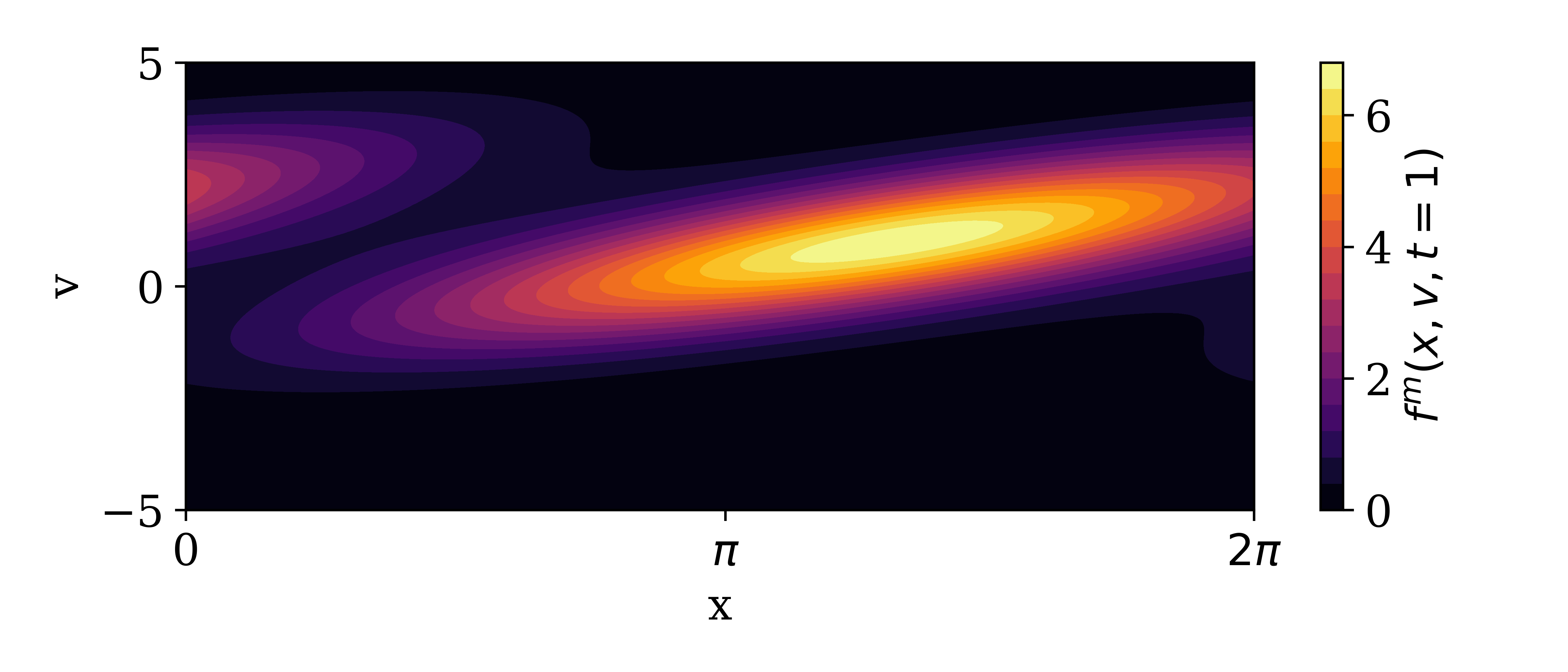}
         \label{fig:SW-sqrt-manufactored}
     \end{subfigure}
    \begin{subfigure}[b]{0.51\textwidth}
         \centering
         \caption{SW formulation relative error}
         \includegraphics[width=\textwidth]{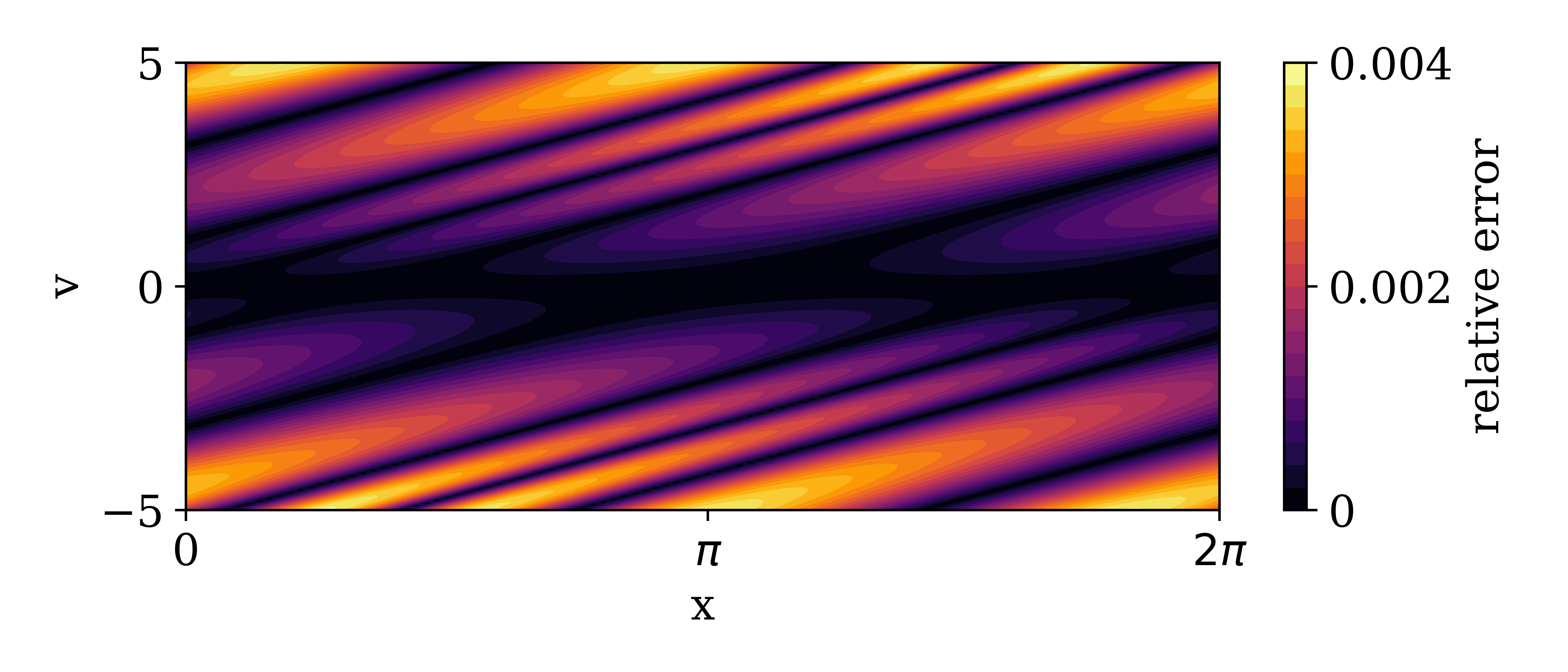}
         \label{fig:relative-error-manufactured-SW}
     \end{subfigure}
     \hspace{-15pt}
        \begin{subfigure}[b]{0.51\textwidth}
         \centering
         \caption{SW square-root formulation relative error}
         \includegraphics[width=\textwidth]{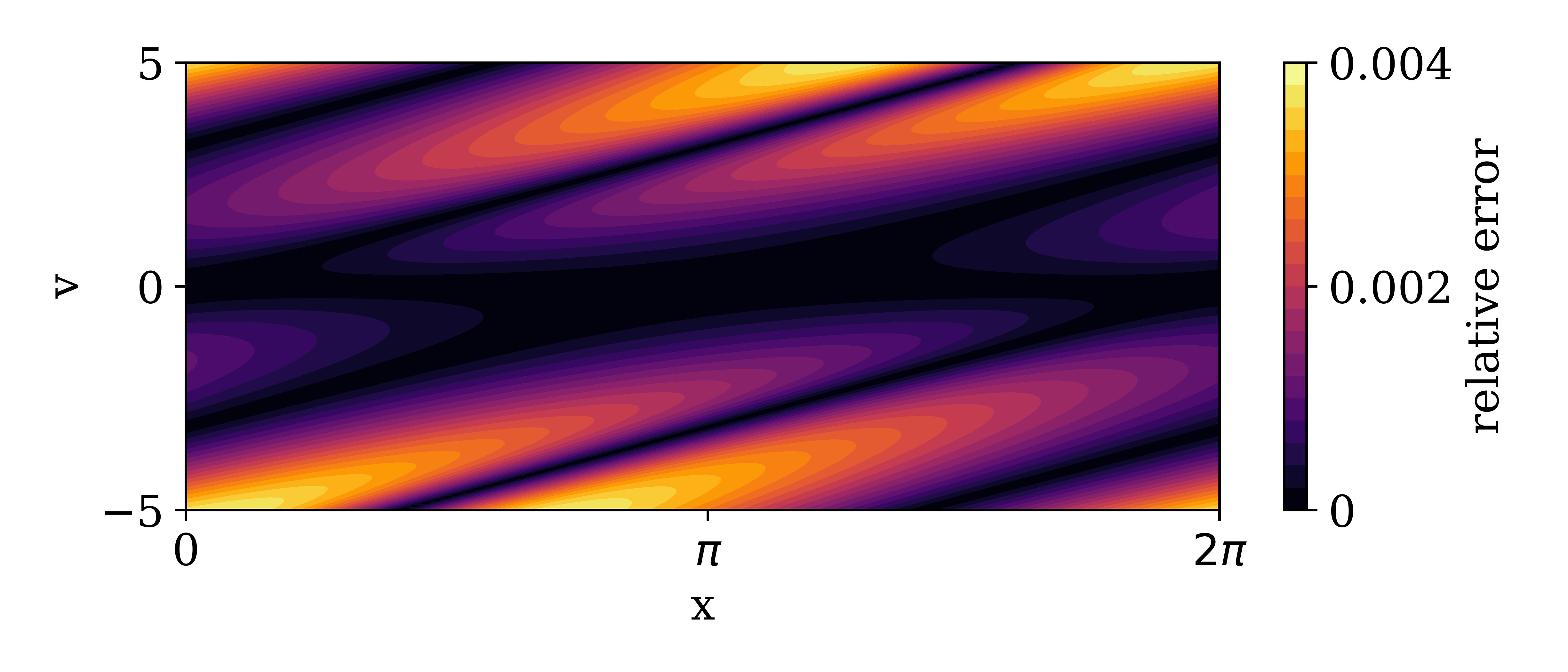}
         \label{fig:relative-error-manufactured-SW-sqrt}
     \end{subfigure}
    \caption{The (a)~SW and (b)~SW square-root formulations manufactured solution distribution function $f^{m}(x, v, t=1)$ results with $N_{x} = N_{v}=100$. Subfigures~(c) and~(d) show the relative error between the analytic solution and the SW and SW square-root numerical results, respectively. The numerical results show that as expected the SW and SW square-root formulation accuracy is comparable, yet the SW formulation has more interaction between higher-order modes. }
    \label{fig:manufactored-solution-relative-error}
\end{figure}

\begin{figure}
    \centering
    \begin{subfigure}[b]{0.5\textwidth}
        \centering 
        \caption{Velocity spectral convergence}
        \includegraphics[width=\textwidth]{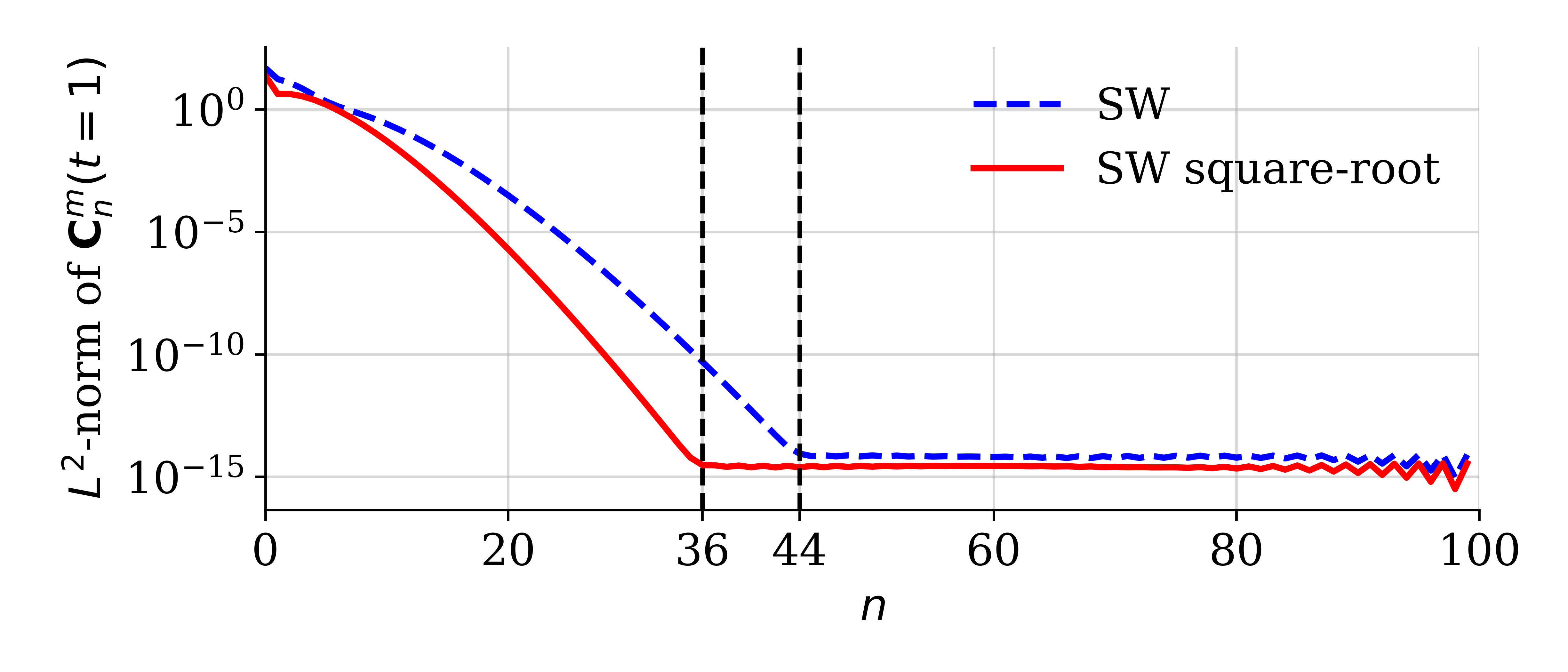}
        \label{fig:convergence-plot-v}
    \end{subfigure}
    \hspace{-12pt}
    \begin{subfigure}[b]{0.5\textwidth}
        \centering 
        \caption{Spatial finite difference convergence}
        \includegraphics[width=\textwidth]{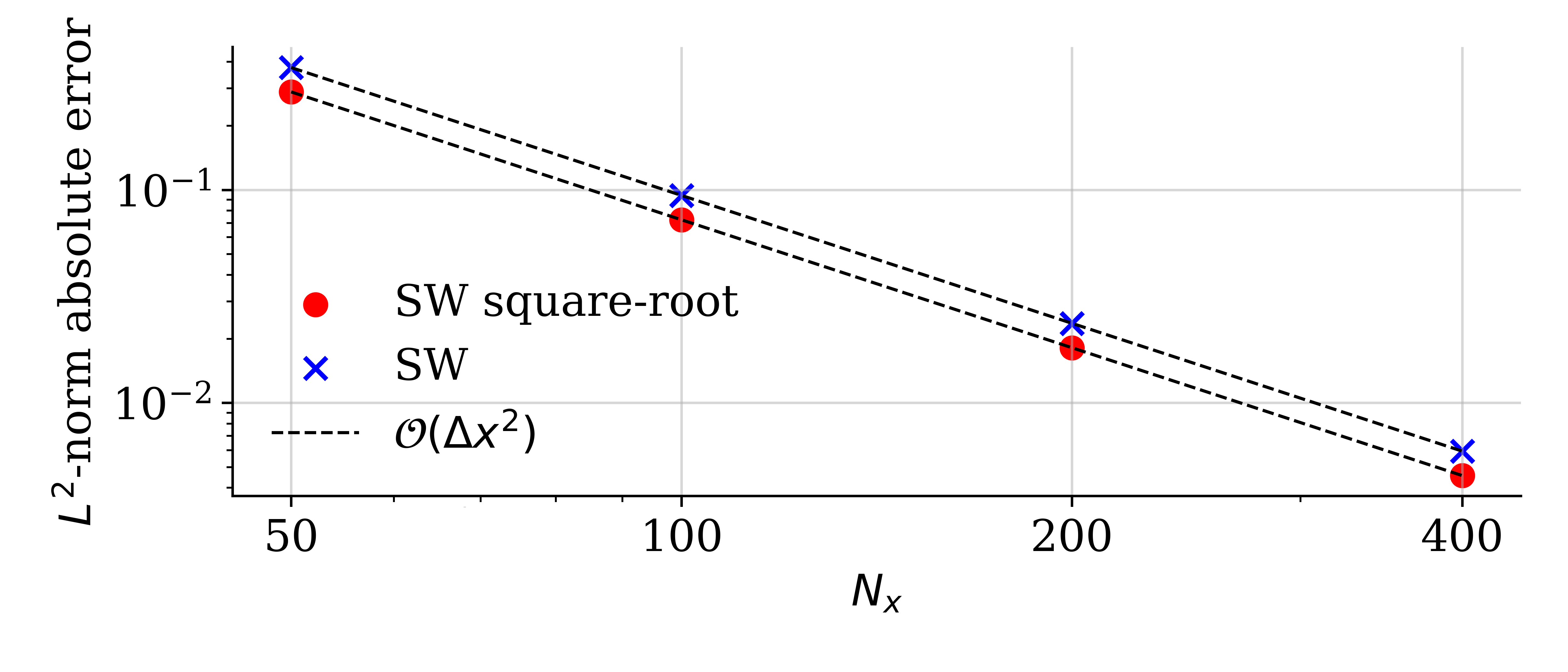}
        \label{fig:convergence-plot-x}
    \end{subfigure}
    \caption{Subfigure~(a) shows the $L^2$-norm of the expansion coefficients with $N_{v}=N_{x}=100$. The spectral convergence illustrates that the SW square-root formulation requires fewer spectral coefficients in comparison to the SW formulation to approximate the manufactured solution. Subfigure~(b) shows the $L^2$-norm absolute error of the SW and SW square-root manufactured solutions. In subfigure~(b), we set $N_{v}=100$ and $N_{x} = 50, 100, 200, 400$. The numerical convergence rate is $2$ for both formulations, which agrees with the second-order central finite difference convergence rate. }
    \label{fig:convergence-plot}
\end{figure}

\subsection{Landau Damping}\label{sec:landau-numerical-results}
Landau damping is a classic benchmark problem for kinetic plasma codes caused by wave-particle resonance~\cite{landau_46}, where particles gain energy from or lose energy to the wave. In a near-Maxwellian distribution, more particles have velocities slightly slower than the wave phase velocity so there is a net transfer of energy from the wave to the particles, and as a result, the electric field damps and particles gain energy.  We examine the linear and nonlinear Landau damping test cases in section~\ref{sec:linear-landau-numerical-results} and section~\ref{sec:nonlinear-landau-numerical-results}, respectively. 

\subsubsection{Linear Landau Damping}\label{sec:linear-landau-numerical-results}
For a small perturbation in the initial electron distribution, Landau damping can be described using linear theory. We set the initial perturbation amplitude to $\epsilon=0.01$.
Figure~\ref{fig:conservation-linear-landau} shows the conservation of particle number, momentum, and energy of the SW and SW square-root formulations with $N_{v}=100$ and $N_{v}=101$. 
Since we set the electron velocity shifting parameter to $u^{e}=0$, the particle number and energy are conserved in the SW formulation when $N_{v}$ is odd, as shown in Figure~\ref{fig:conservation-linear-landau-sw-101}, and momentum is conserved when $N_{v}$ is even, as shown in Figure~\ref{fig:conservation-linear-landau-sw-100}. 
On the contrary, the particle number is conserved in the SW square-root formulation for any $N_{v}$. Figure~\ref{fig:conservation-linear-landau-sw-sqrt-101} and Figure~\ref{fig:conservation-linear-landau-sw-sqrt-100} show the SW square-root formulation conservation properties for $N_{v}=101$ and $N_{v}=100$, respectively, which are almost identical, unlike the SW formulation conservation properties which depend on $N_{v}$ being even or odd. 
Figure~\ref{fig:conservation-linear-landau} also shows the drift rate in the conservation for each setup. The numerical results show that the analytic conservation properties derived in section~\ref{sec:conservation-properties} match the numerical drift rate in the conservation of particle number, momentum, and energy. 
Despite the momentum drift rates given by Eqns.~\eqref{change-in-momentum-sw} and~\eqref{drift-in-momentum-sw-sqrt}, momentum is conserved close to machine precision in the SW square-root formulation (for both $N_{v}=100$ and $N_{v}=101$) and the SW formulation (with $N_{v}=101$). This is because the momentum drift rate is a function of the product of the electric field and the last spectral coefficient, which are both small due to wave damping and spectral convergence.

As mentioned in section~\ref{sec:time-reversibility}, the anti-symmetry of the semi-discrete Eq.~\eqref{vlasov-ode} results in explicit Runge-Kutta temporal integrators being approximately time-reversal symmetric. We numerically demonstrate this with the SW square-root formulation. Figure~\ref{fig:damping-rate-linear-landau-runge-kutta} shows the electric field amplitude as a function of time for the forward and backward in-time simulations using the non-adaptive 3rd-order explicit Runge-Kutta integrator of Bogacki-Shampine~\cite{bogacki_shampine_1989_integrator} with $\Delta t = 10^{-3}$ and $N_{v}=100$. Both the forward and backward in-time simulations agree with the linear theory electric field damping rate\footnote{The linear theory damping rate $\gamma = -0.851$ for the linear Landau damping test is obtained by solving the dispersion equation numerically with wavenumber $k=1$~\cite[Table 1]{landau_damping_rate_1973}, i.e. $2 + \zeta Z(\zeta) = 0$, where $\zeta = (w_{r} + \gamma i)/\sqrt{2}$ and $Z(\zeta) = \pi^{-1/2}\int_{-\infty}^{\infty} (x-\zeta)^{-1}\exp(-x^2)\mathrm{d}x$ is the plasma dispersion function, see~\cite[\S 2]{gary_1993_theory} for a detailed derivation.
}.
The numerical results show that although explicit Runge-Kutta temporal integrators are not time-reversal symmetric, the anti-symmetric structure of the semi-discrete equations~\eqref{vlasov-ode} makes such temporal integrators approximately time-reversible. Additionally,  Figure~\ref{fig:damping-rate-linear-landau-runge-kutta} shows that the SW square-root formulation starts to exhibit recurrence
phenomena (artificial echoes) due to the closure by truncation at $t \sim 8$ (normalized). As demonstrated  by~\cite{camporeale_sps_2016}, the onset of the temporal recurrence can be extended by increasing the number of velocity spectral terms $N_{v}$, a relationship that roughly scales as $\mathcal{O}(\sqrt{N_{v}})$.
\begin{figure}
    \centering
     \begin{subfigure}[b]{0.49\textwidth}
         \centering
         \caption{SW formulation $N_{v}=101$ (odd)}
         \includegraphics[width=\textwidth]{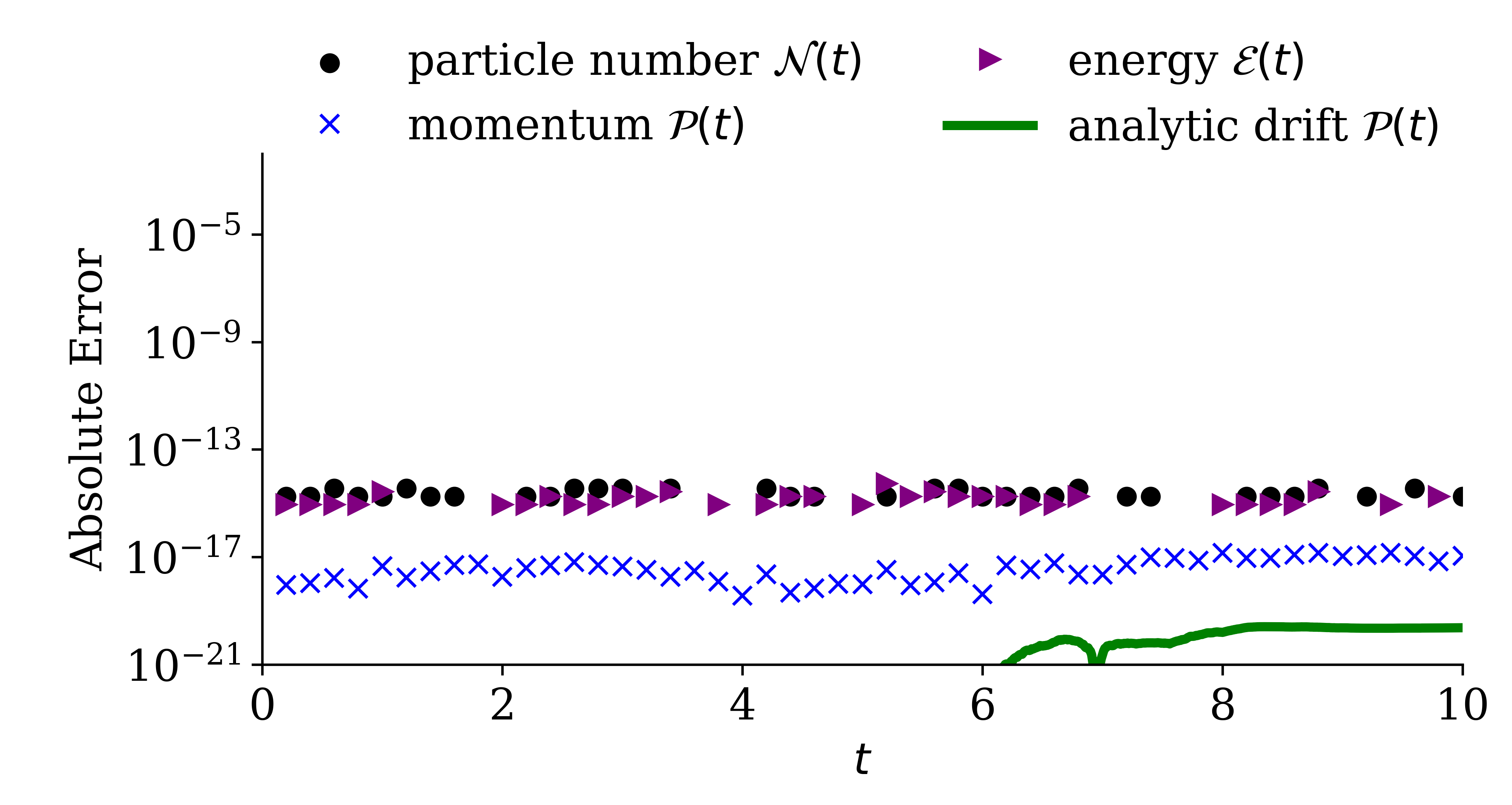}
         \label{fig:conservation-linear-landau-sw-101}
     \end{subfigure}
         \begin{subfigure}[b]{0.49\textwidth}
         \centering
         \caption{SW formulation $N_{v}=100$ (even)}
         \includegraphics[width=\textwidth]{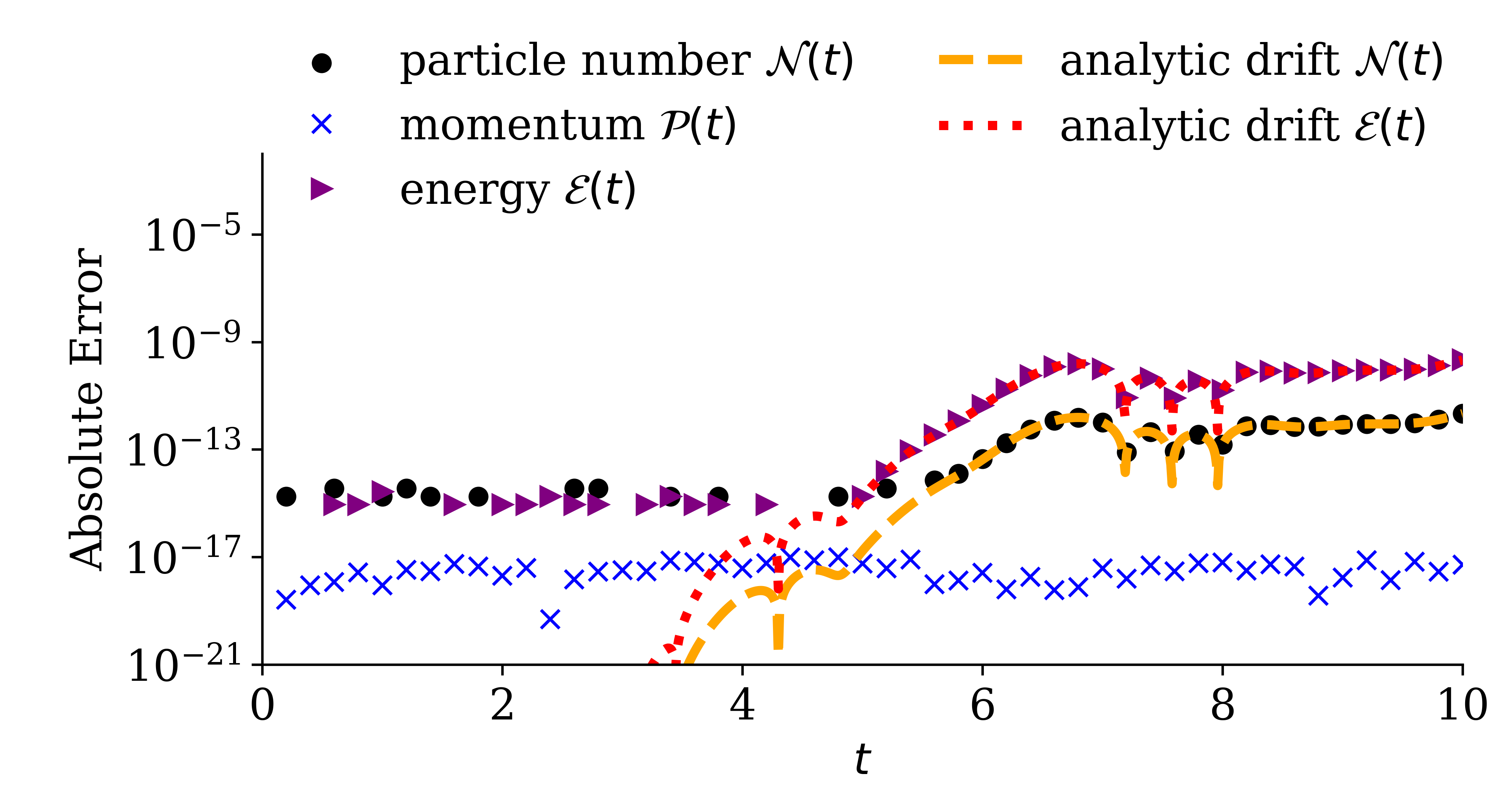}
         \label{fig:conservation-linear-landau-sw-100}
     \end{subfigure}
    \begin{subfigure}[b]{0.49\textwidth}
         \centering
         \caption{SW square-root formulation $N_{v}=101$ (odd)}
         \includegraphics[width=\textwidth]{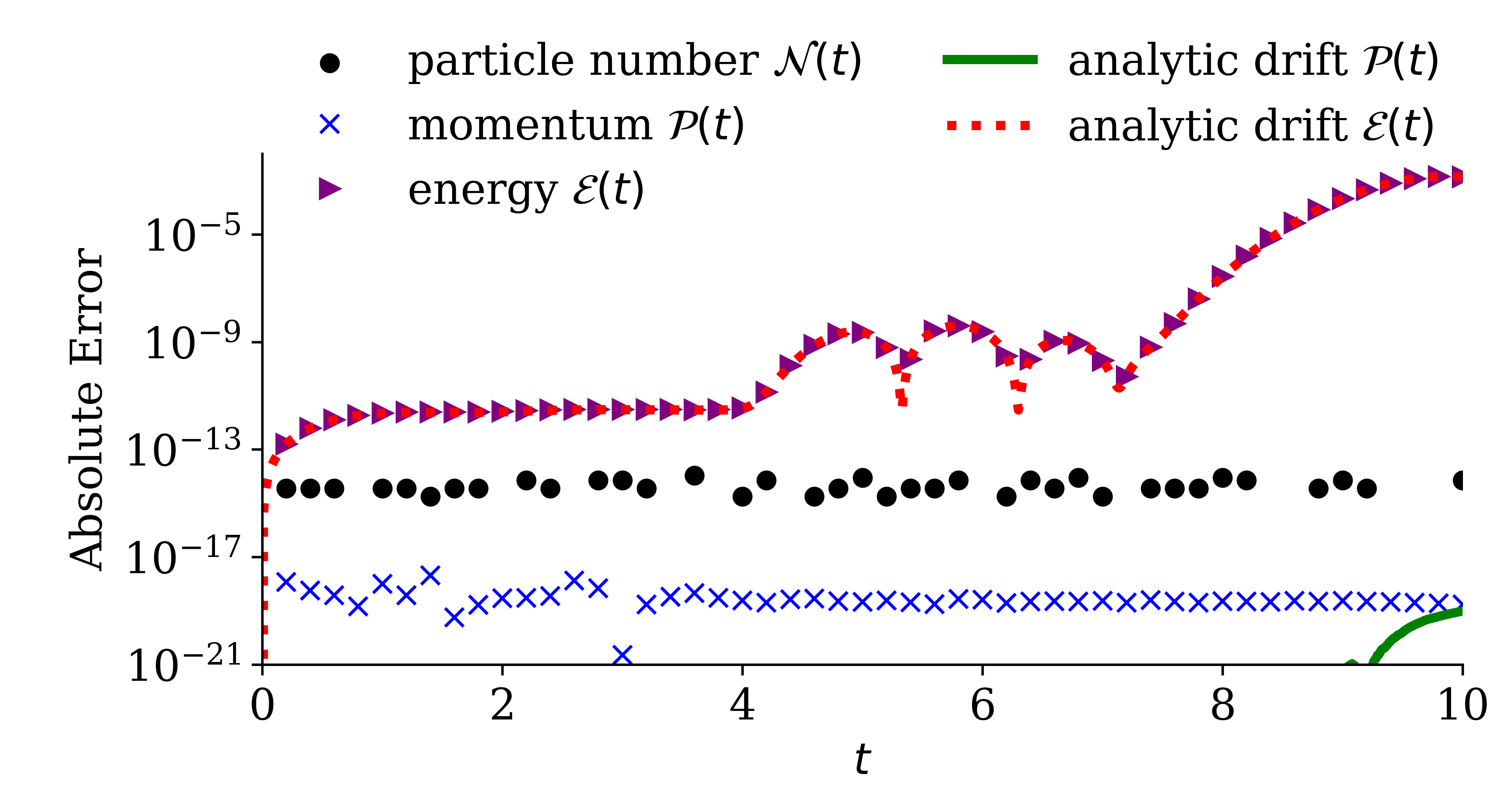}
         \label{fig:conservation-linear-landau-sw-sqrt-101}
     \end{subfigure}
    \begin{subfigure}[b]{0.49\textwidth}
         \centering
         \caption{SW square-root formulation $N_{v}=100$ (even)}
         \includegraphics[width=\textwidth]{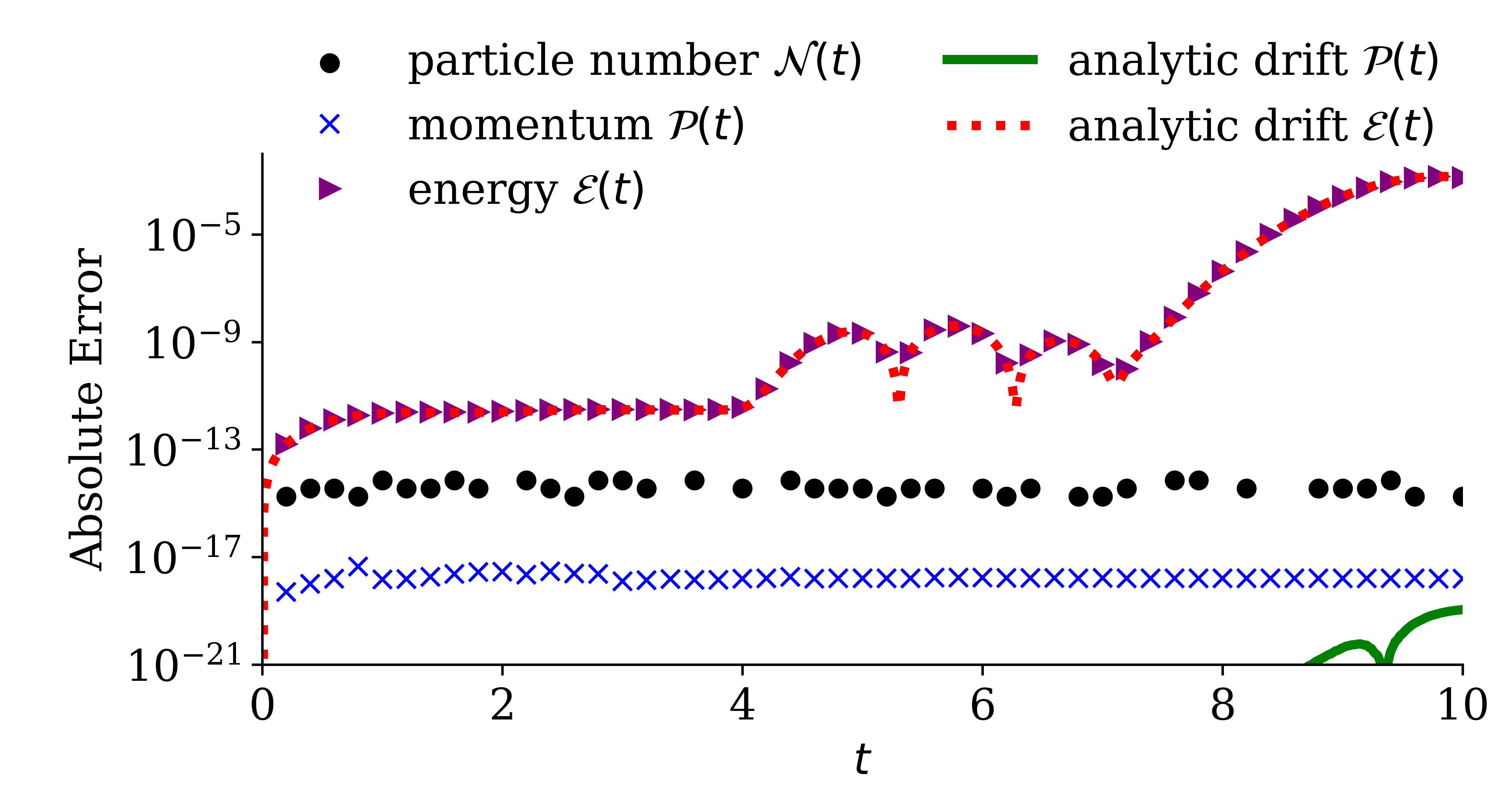}
         \label{fig:conservation-linear-landau-sw-sqrt-100}
     \end{subfigure}
    \caption{Conservation of particle number, momentum, and energy via the SW formulation~(a/b) and the SW square-root formulation~(c/d) for linear Landau damping. Subfigures (a/c) have an odd number of spectral terms ($N_v = 101$) and subfigures (b/d) have an even number of spectral terms ($N_v = 100$). The numerical drift rate (the rate at which conservation no longer holds) in each setup matches the analytic drift rate, which we derive in Eqns.~\eqref{change-in-mass-sw}, \eqref{change-in-momentum-sw}, \eqref{change-in-total-energy-sw}, \eqref{drift-in-momentum-sw-sqrt}, and~\eqref{change-in-total-energy-sw-sqrt}.}
    \label{fig:conservation-linear-landau}
\end{figure}

\begin{figure}
    \centering
    \includegraphics[width=0.6\textwidth]{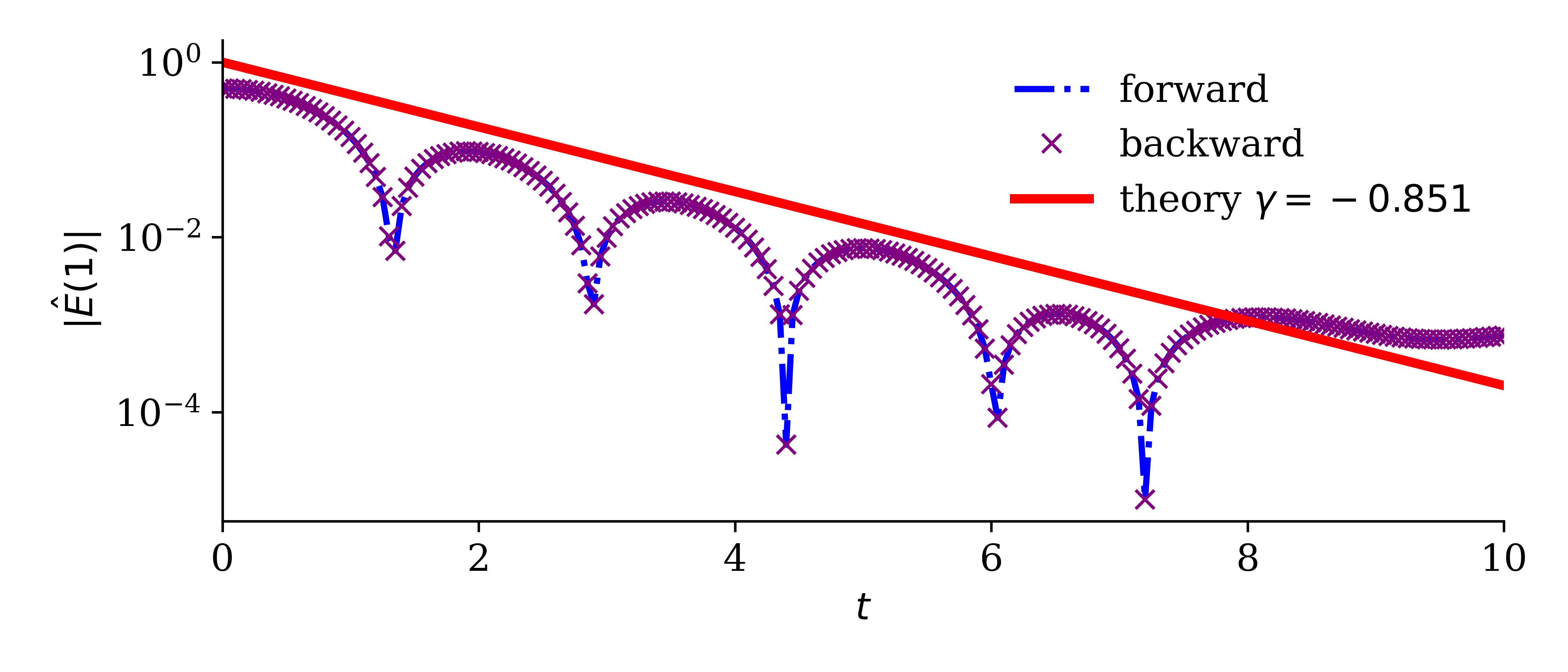}
    \caption{The linear Landau damping electric field amplitude as a function of time computed via the SW square-root formulation. The numerical damping rate agrees with the theoretical damping rate from~\cite[Table 1]{landau_damping_rate_1973}. The forward and backward simulations are evolved via the non-adaptive 3rd-order explicit Runge-Kutta integrator of Bogacki-Shampine~\cite{bogacki_shampine_1989_integrator} with $\Delta t = 10^{-3}$ and $N_{v}=100$. Note that such an explicit Runge-Kutta temporal integrator is not time-reversal symmetric, yet the anti-symmetric structure of the equations makes this method approximately time-reversible.}
    \label{fig:damping-rate-linear-landau-runge-kutta}
\end{figure}

\subsubsection{Nonlinear Landau Damping}\label{sec:nonlinear-landau-numerical-results}
The linear theory fails to describe the damping rate if the magnitude of the initial perturbation is large as nonlinear effects become important. We set the initial perturbation amplitude for the nonlinear Landau damping test case to $\epsilon=0.5$. The electron distribution function $f^{e}(x, v, t)$ at various time instances is shown in Figure~\ref{fig:nonlinear-landau-snapshots}. Figure~\ref{fig:nonlinear-landau-evolution-sw} illustrates that the SW formulation results become negative in phase space starting from $t=5$. By construction, the SW square-root formulation preserves the distribution function's non-negative property, as shown in Figure~\ref{fig:nonlinear-landau-evolution-sw-sqrt}. At  $t=10$, it appears that the SW formulation results display a slightly higher degree of filamentation than the SW formulation results, as shown in Figure~\ref{fig:filamentation-nonlinear-landau}. Note that the SW and SW square-root formulations require a higher number of Hermite basis functions $N_{v}$ (velocity resolution) to accurately simulate the nonlinear Landau damping for long-time simulations ($t\sim 50$ normalized). This is attributed to the emergence of small-scale structures in velocity space.

Figure~\ref{fig:conservation-nonlinear-landau} shows the conservation of particle number, momentum, and energy for the nonlinear Landau damping test case. Similar to the linear Landau damping test case, we set the velocity shifting parameter $u^{e}=0$, thus the SW formulation conserves particle number and energy when $N_{v}$ is odd as shown in Figure~\ref{fig:conservation-nonlinear-landau-sw-101} and conserves momentum when $N_{v}$ is even as shown in Figure~\ref{fig:conservation-nonlinear-landau-sw-100}. As predicted, the SW square-root formulation conserves particle number, independently of $N_{v}$, as shown in Figure~\ref{fig:conservation-nonlinear-landau-sw-101} (with $N_{v}=101$) and Figure~\ref{fig:conservation-nonlinear-landau-sw-sqrt-100} (with $N_{v}=100$). Interestingly, although energy and momentum are not conserved in the SW square-root formulation, their drift rate seems to grow very slowly in time. 

\begin{figure}
    \centering
     \begin{subfigure}[b]{0.49\textwidth}
         \centering
         \caption{SW formulation}
         \includegraphics[width=\textwidth]{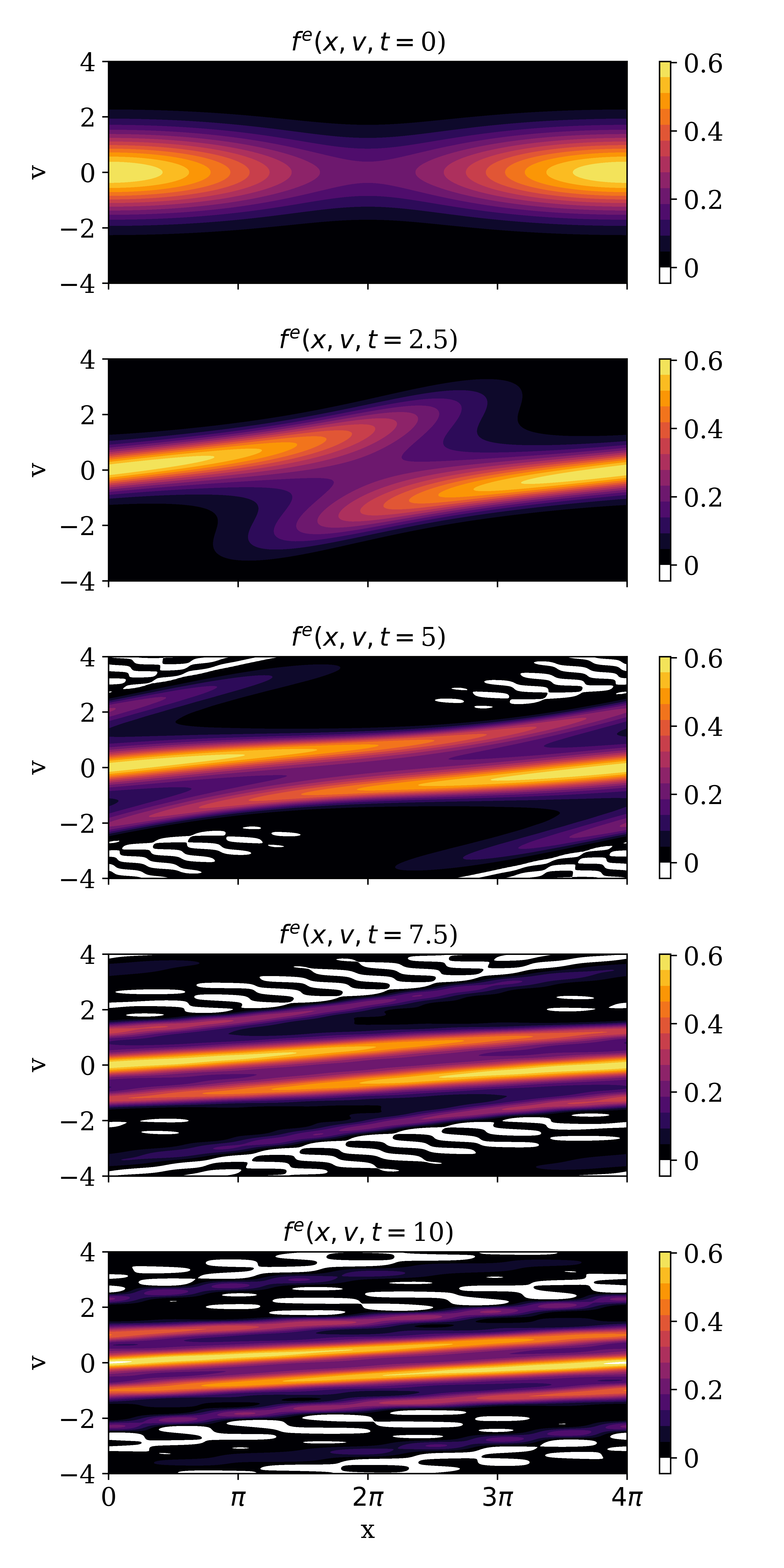}
         \label{fig:nonlinear-landau-evolution-sw}
     \end{subfigure}
    \begin{subfigure}[b]{0.49\textwidth}
         \centering
         \caption{SW square-root formulation}
         \includegraphics[width=\textwidth]{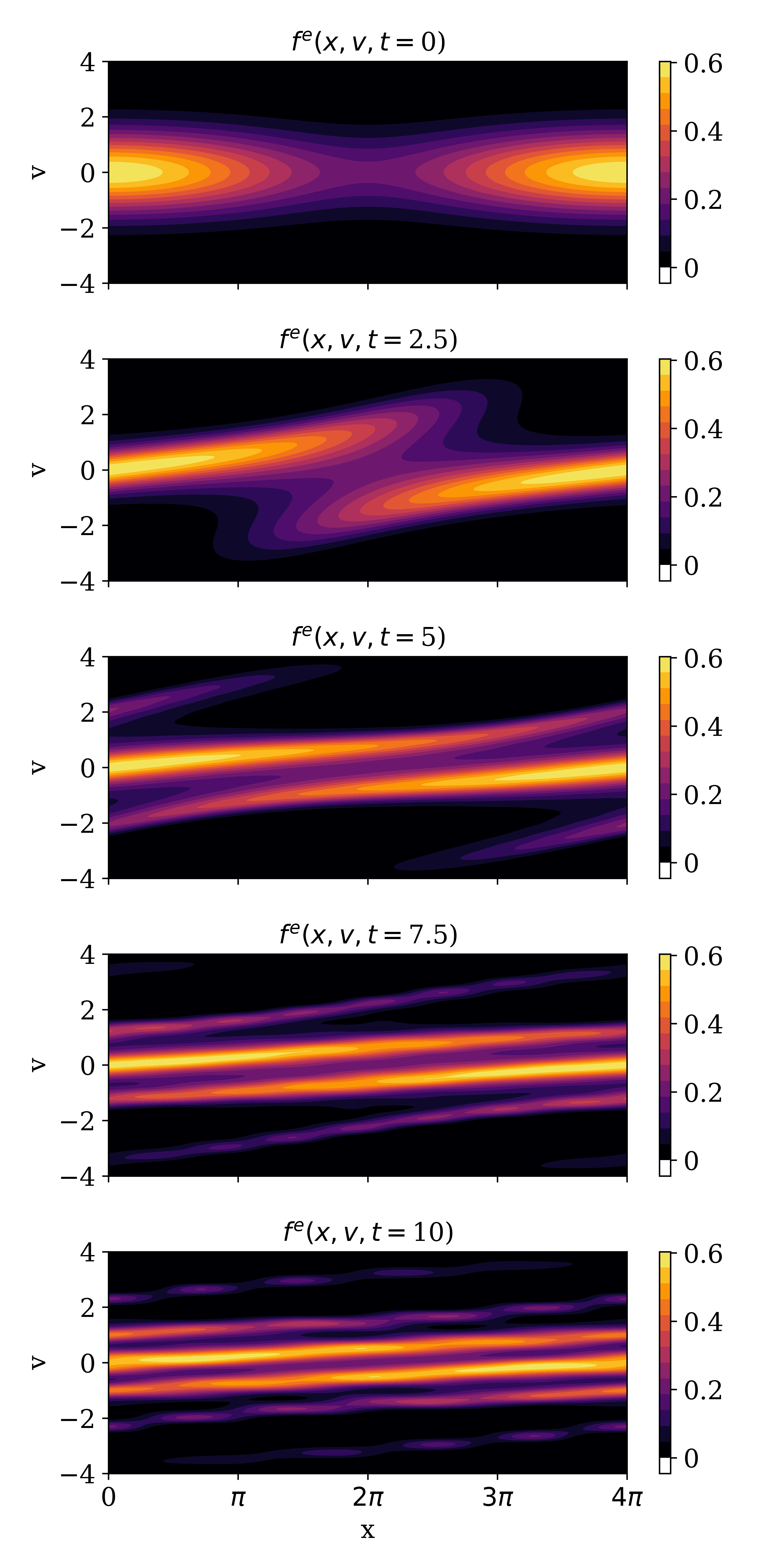}
         \label{fig:nonlinear-landau-evolution-sw-sqrt}
     \end{subfigure}
    \caption{The evolution of the electron distribution function $f^{e}(x, v, t)$ for nonlinear Landau damping with $N_{v}=100$. The distribution function becomes negative in the (a)~SW formulation whereas the (b)~SW square-root formulation is positivity preserving by construction. }
    \label{fig:nonlinear-landau-snapshots}
\end{figure}

\begin{figure}
    \centering
    \includegraphics[width=0.6\textwidth]{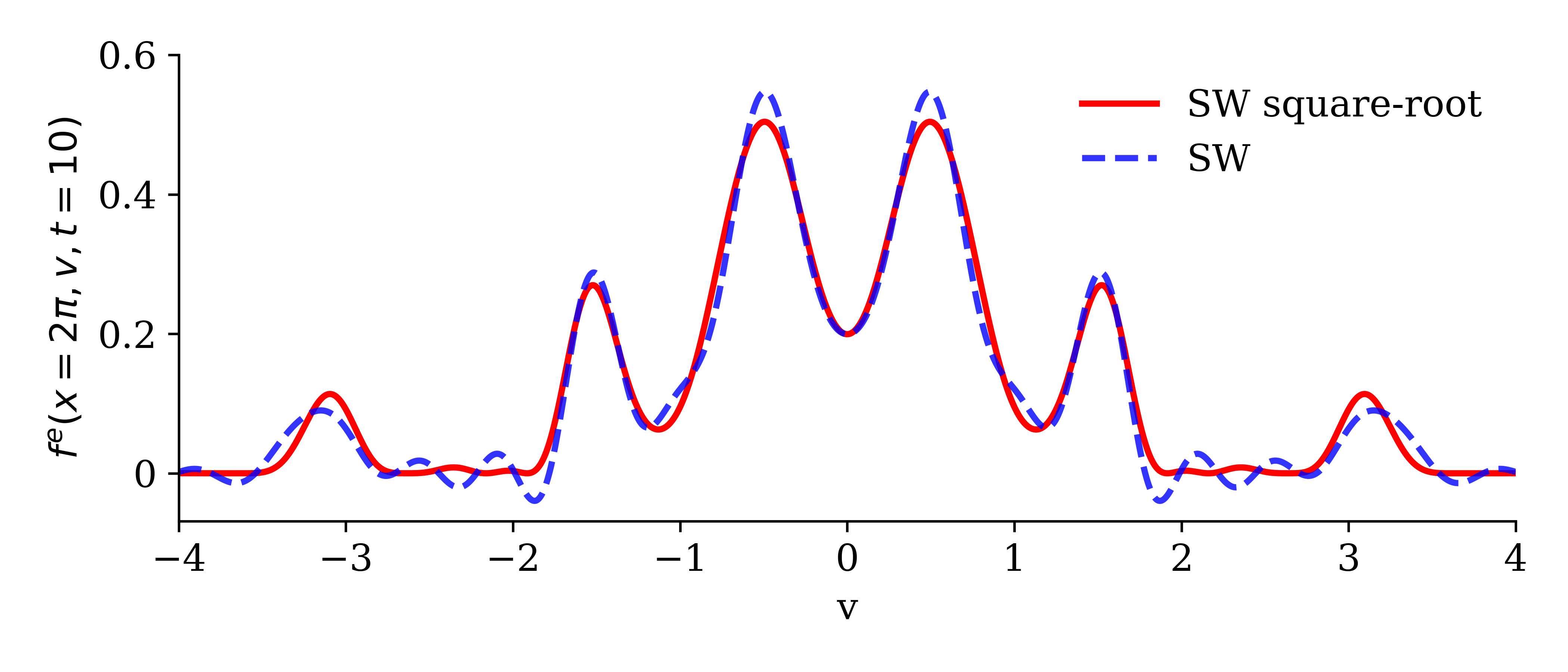}
    \caption{A cross-section of the nonlinear Landau damping electron particle distribution function $f^{e}(x=2\pi, v, t=10)$ via the two formulations: SW and SW square-root (with $N_{v}=101$). The SW formulation appears to have more filamentation near regions close to zero. }
    \label{fig:filamentation-nonlinear-landau}
\end{figure}

\begin{figure}
    \centering
     \begin{subfigure}[b]{0.49\textwidth}
         \centering
         \caption{SW formulation $N_{v}=101$ (odd)}
         \includegraphics[width=\textwidth]{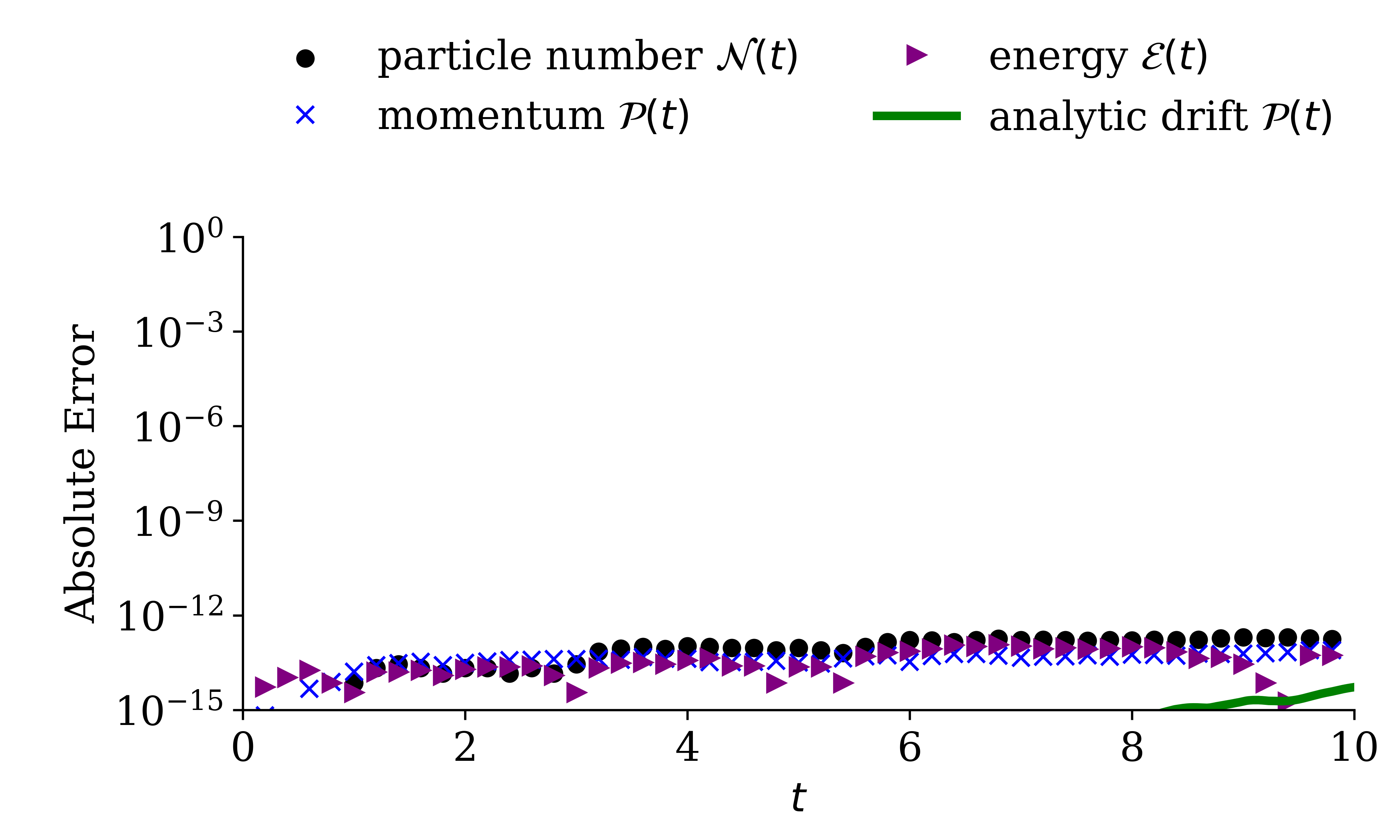}
         \label{fig:conservation-nonlinear-landau-sw-101}
     \end{subfigure}
    \begin{subfigure}[b]{0.49\textwidth}
         \centering
         \caption{SW formulation $N_{v}=100$ (even)}
         \includegraphics[width=\textwidth]{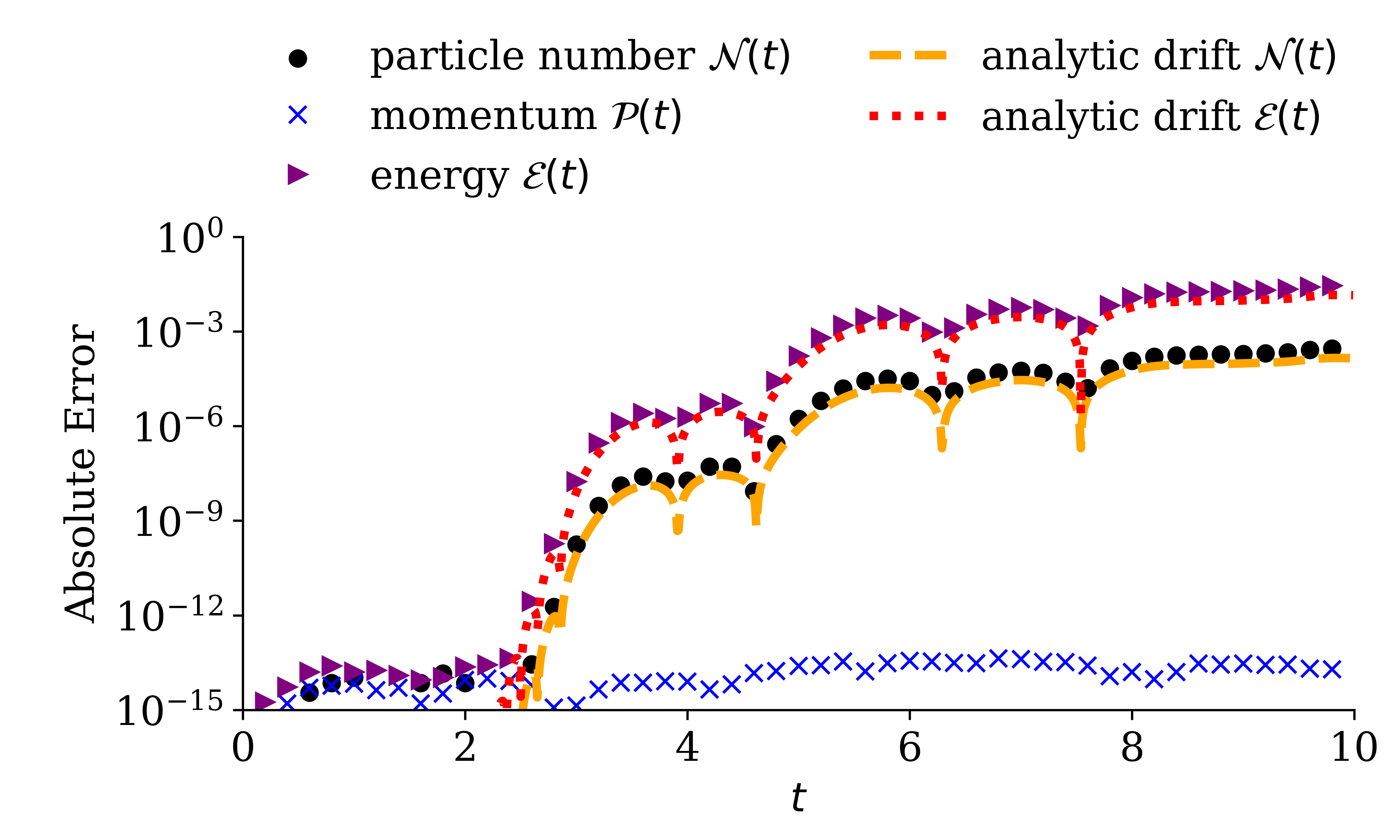}
         \label{fig:conservation-nonlinear-landau-sw-100}
     \end{subfigure}
    \begin{subfigure}[b]{0.49\textwidth}
         \centering
         \caption{SW square-root formulation $N_{v}=101$ (odd)}
         \includegraphics[width=\textwidth]{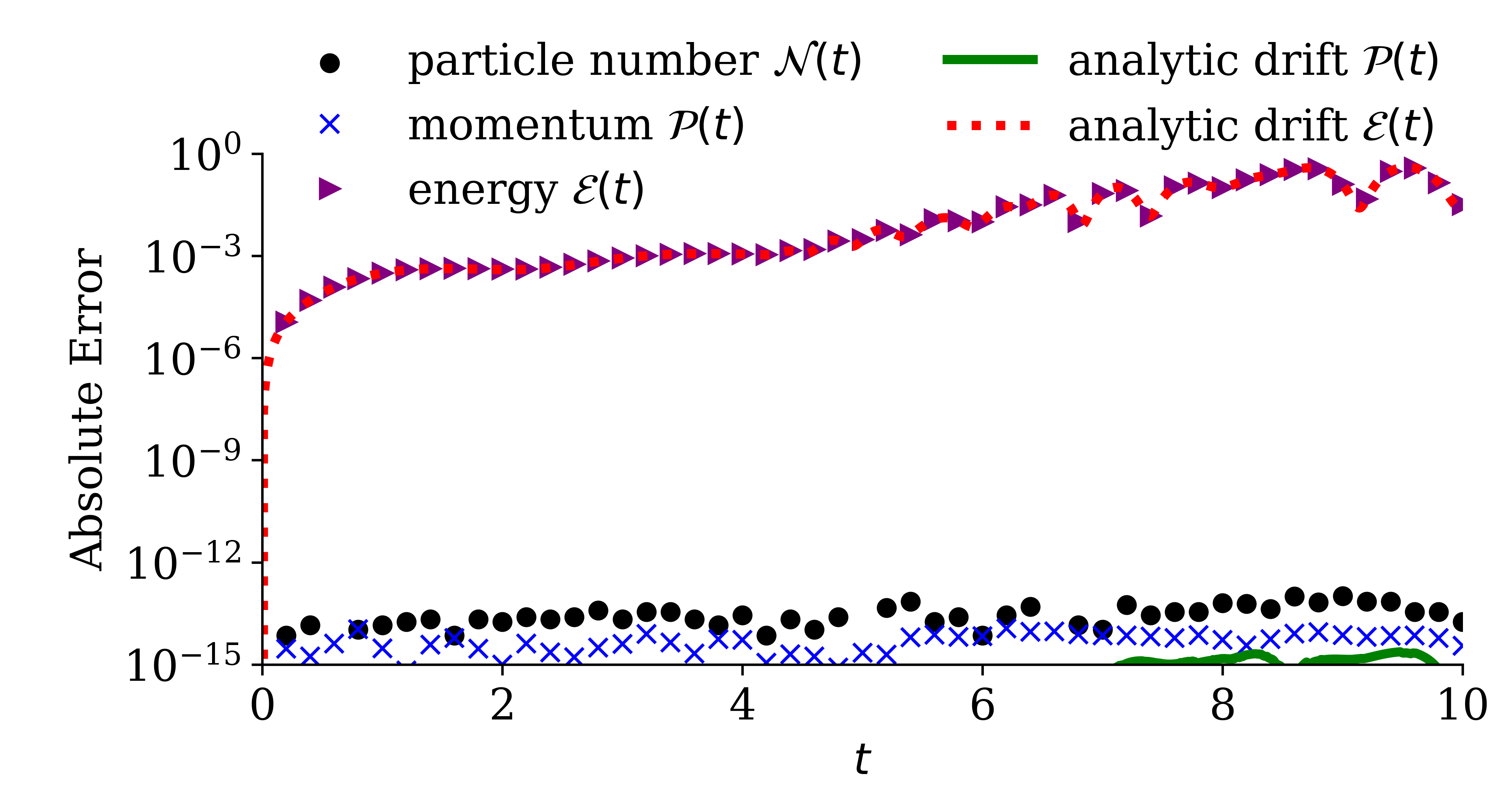}
         \label{fig:conservation-nonlinear-landau-sw-sqrt-101}
     \end{subfigure}
    \begin{subfigure}[b]{0.49\textwidth}
         \centering
         \caption{SW square-root formulation $N_v=100$ (even)}
         \includegraphics[width=\textwidth]{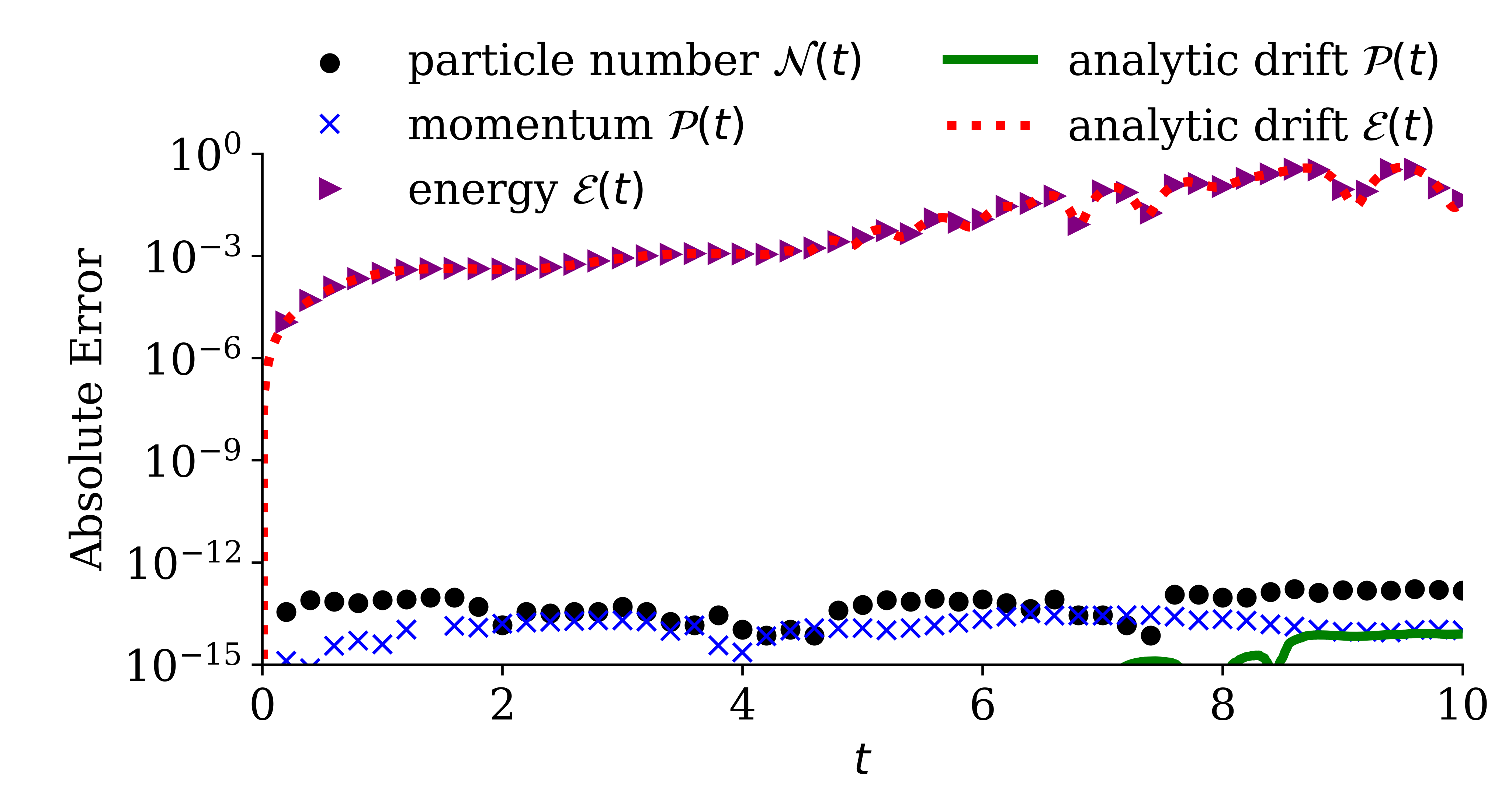}
         \label{fig:conservation-nonlinear-landau-sw-sqrt-100}
     \end{subfigure}
    \caption{Same as Figure~\ref{fig:conservation-linear-landau} for nonlinear Landau damping. The numerical drift rate for the SW square-root formulation is independent of $N_{v}$, whereas the SW formulation drift rate depends on whether $N_{v}$ is either odd or even.}
    \label{fig:conservation-nonlinear-landau}
\end{figure}

\subsection{Two-stream Instability}\label{sec:two-stream-numerical-results}
The two-stream instability is a classic velocity-space microinstability corresponding to two counter-streaming electron beams. Figure~\ref{fig:two-stream-electron-distribution-t-45} shows the electron distribution function in phase space at $t=45$ with $N_{v}=101$. The SW formulation results become negative in certain regions of phase space, which is relatively benign as such regions are far from the phase space vortex. Figure~\ref{fig:filamentation-two-stream} shows a cross-section of the electron distribution function for the two formulations, which indicates along with Figure~\ref{fig:two-stream-electron-distribution-t-45}, that the SW square-root formulation exhibits more filamentation and the negativity of the distribution function in SW is insignificant. In contrast to the nonlinear Landau damping test case, the SW square-root formulation results display a higher degree of filamentation in comparison to the SW formulation results. This suggests that the filamentation effects are problem-dependent. 
The numerical electric field growth rate for the SW and SW square-root formulations with $N_{v}=101$ is shown in Figure~\ref{fig:growth-rate-two-stream}. The numerical growth rate for both formulations agrees with the linear theory growth rate\footnote{The linear theory growth rate $\gamma=0.185$ for the two-stream instability test is obtained by solving numerically the dispersion equation for $k=1$ wavenumber, i.e. $9  + 4\zeta_{1}Z(\zeta_{1}) + 4\zeta_{2}Z(\zeta_{2}) = 0$, where $\zeta_{1} = 2(w_{r}-1 + \gamma i)$, $\zeta_{2} = 2(w_{r}+1 + \gamma i)$, and $Z(\zeta) = \pi^{-1/2} \int_{-\infty}^{\infty} (x-\zeta)^{-1}\exp(-x^2)\mathrm{d}x$, see~\cite[\S 3]{gary_1993_theory} for a detailed derivation.}.

Figure~\ref{fig:conservation-two-stream} compares the SW and SW square-root conservation laws for the two-stream instability. 
Since the velocity shifting parameter $u^{s} \neq 0$, the SW formulation only conserves particle number when $N_{v}$ is odd, see Figure~\ref{fig:conservation-two-stream-sw-101}, and does not conserve particle number, momentum, or energy when $N_{v}$ is even, see Figure~\ref{fig:conservation-two-stream-sw-101}. However, the momentum drift rate is very small in both cases. The SW square-root formulation conserves particle number regardless of whether $N_{v}$ is even or odd and momentum drift is negligible as shown in Figure~\ref{fig:conservation-two-stream-sw-sqrt-101} and Figure~\ref{fig:conservation-two-stream-sw-sqrt-100}. Here, the momentum drifts for both the SW and SW square-root formulations, see Eqn~\eqref{change-in-momentum-sw} and~\eqref{drift-in-momentum-sw-sqrt}, are negligible due to the phase space symmetry.
\begin{figure}
    \centering
     \begin{subfigure}[b]{0.5\textwidth}
         \centering
         \caption{SW formulation}
         \includegraphics[width=\textwidth]{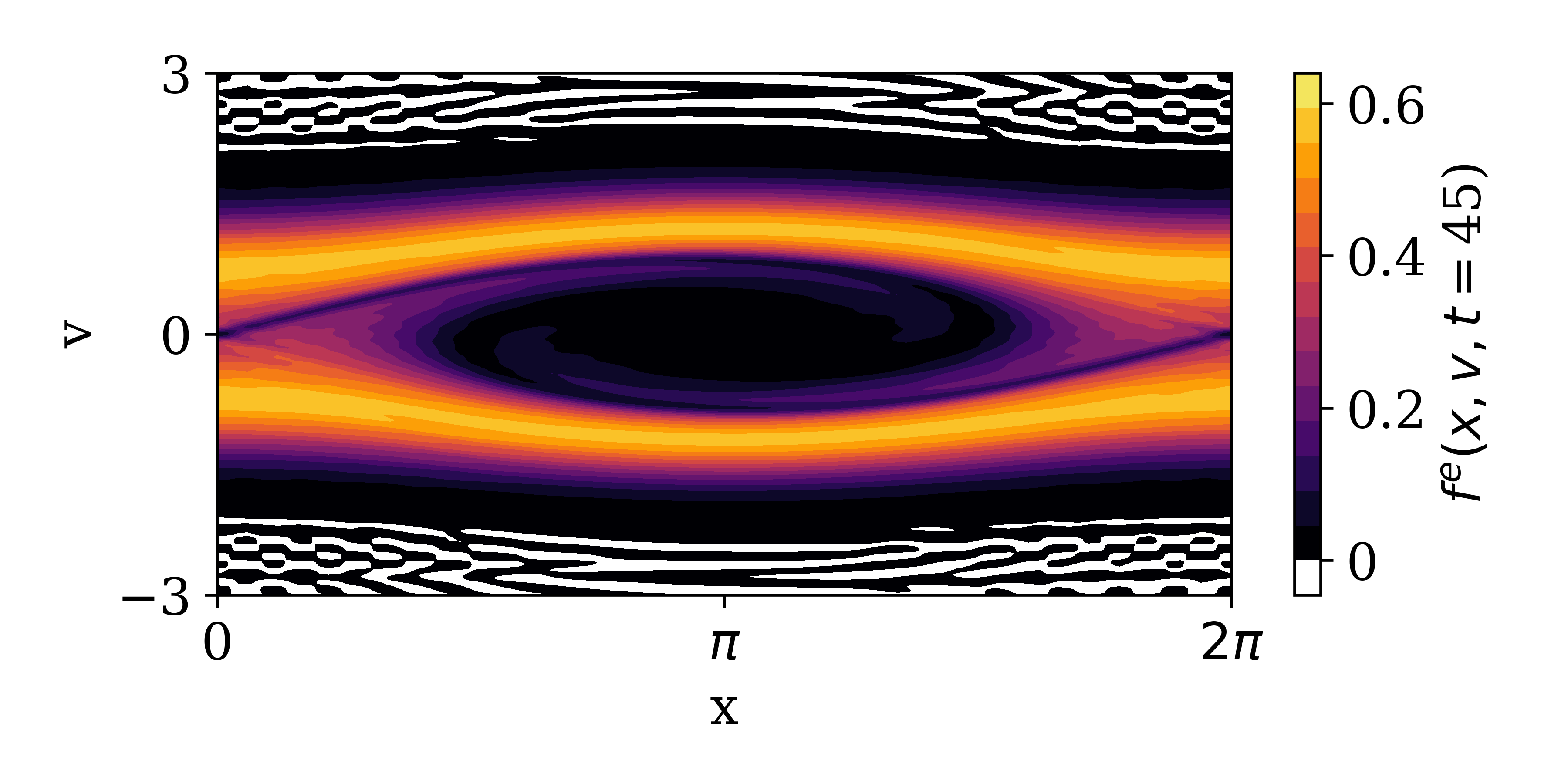}
         \label{fig:two-stream-sw-t-45}
     \end{subfigure}
     \hspace{-10pt}
    \begin{subfigure}[b]{0.5\textwidth}
         \centering
         \caption{SW square-root formulation}
         \includegraphics[width=\textwidth]{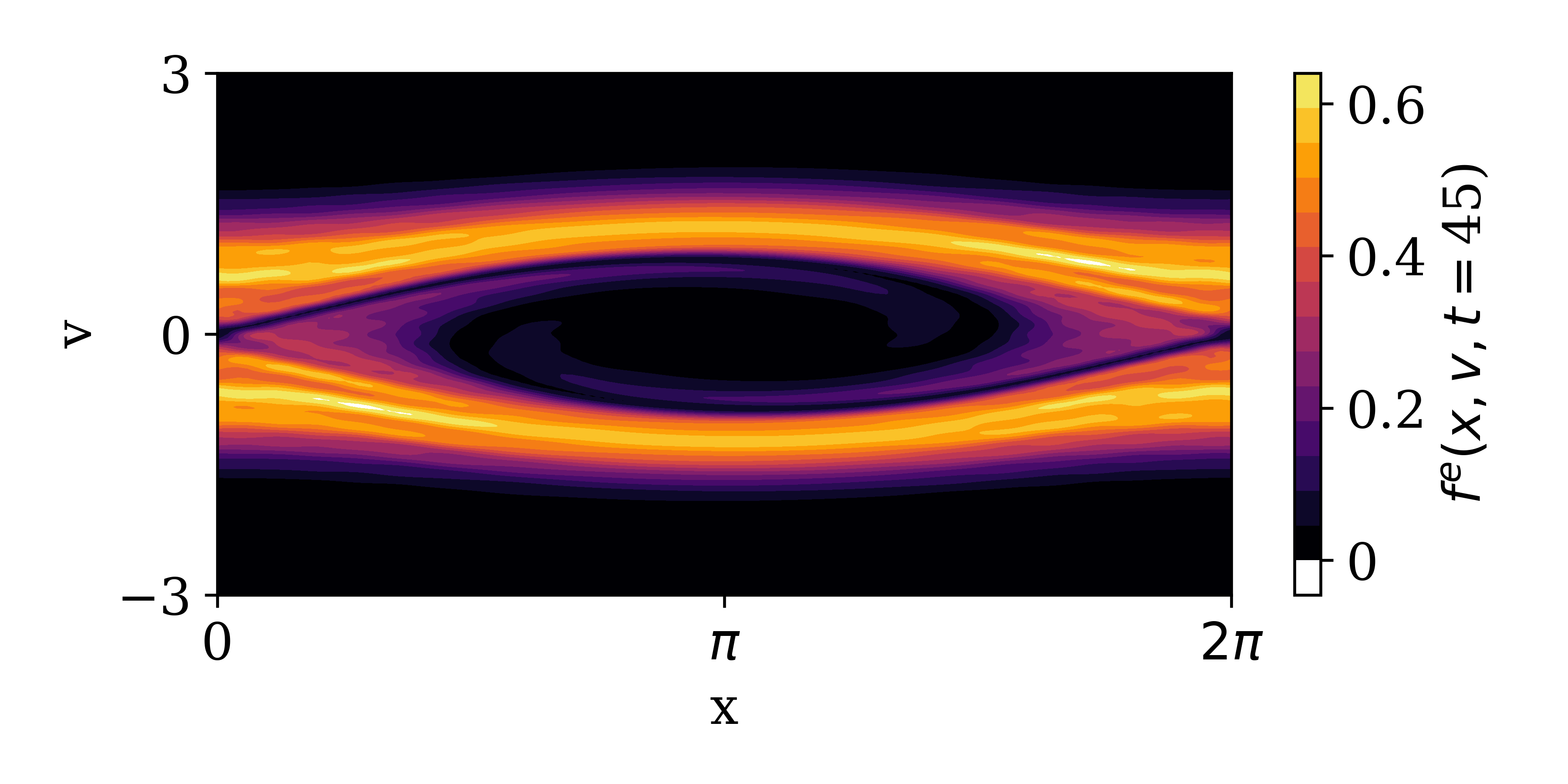}
         \label{fig:two-stream-sw-sqrt-t-45}
     \end{subfigure}
    \caption{The two-stream instability electron distribution function $f^{e}(x, v, t=45)$ with $N_{v}=101$ obtained by the (a)~SW and the (b)~SW square-root formulations. The SW formulation becomes negative in certain regions in phase space, yet such regions are far from the region of interest, i.e. the phase space vortex.}
    \label{fig:two-stream-electron-distribution-t-45}
\end{figure}

\begin{figure}
    \centering
        \begin{subfigure}[b]{0.5\textwidth}
         \centering
         \caption{Two-stream instability filamentation}
         \includegraphics[width=\textwidth]{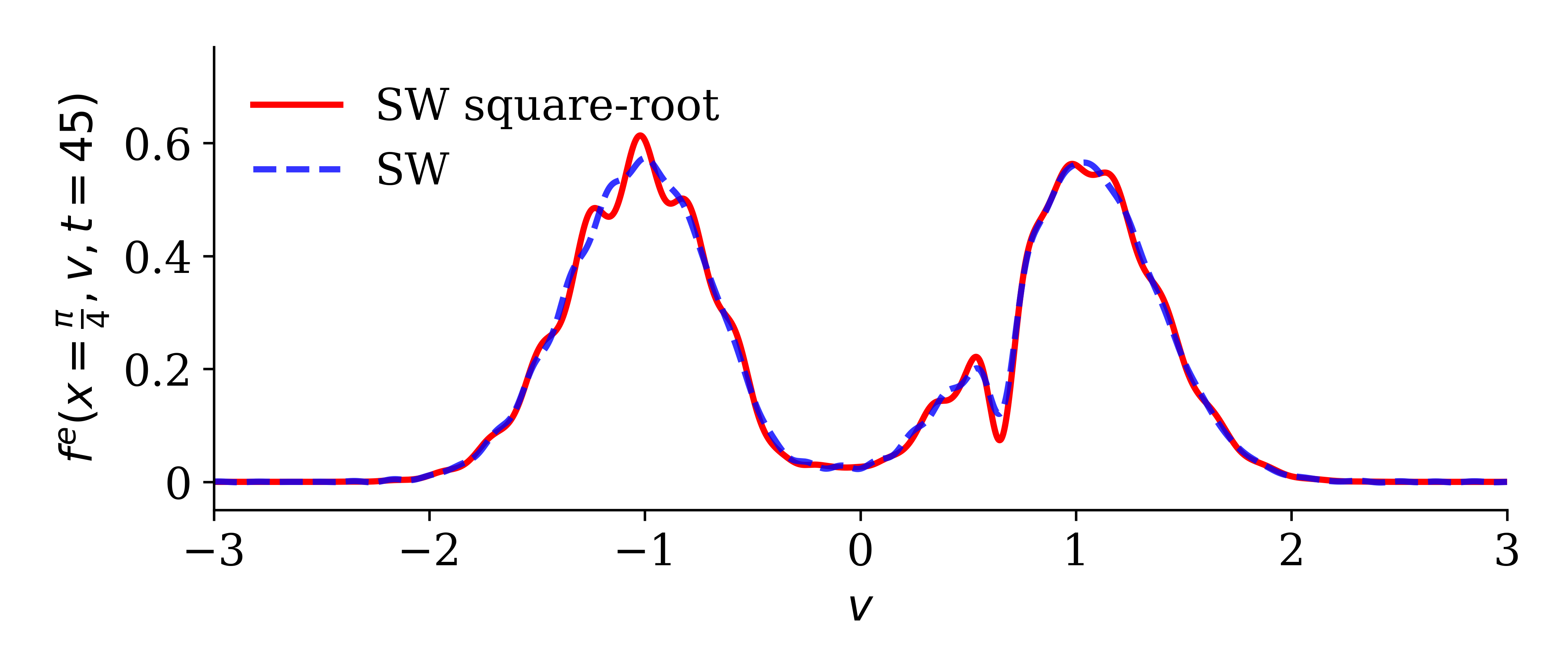}
         \label{fig:filamentation-two-stream}
     \end{subfigure}
    \hspace{-10pt}
    \begin{subfigure}[b]{0.5\textwidth}
         \centering
         \caption{Two-stream instability growth rate }
         \includegraphics[width=\textwidth]{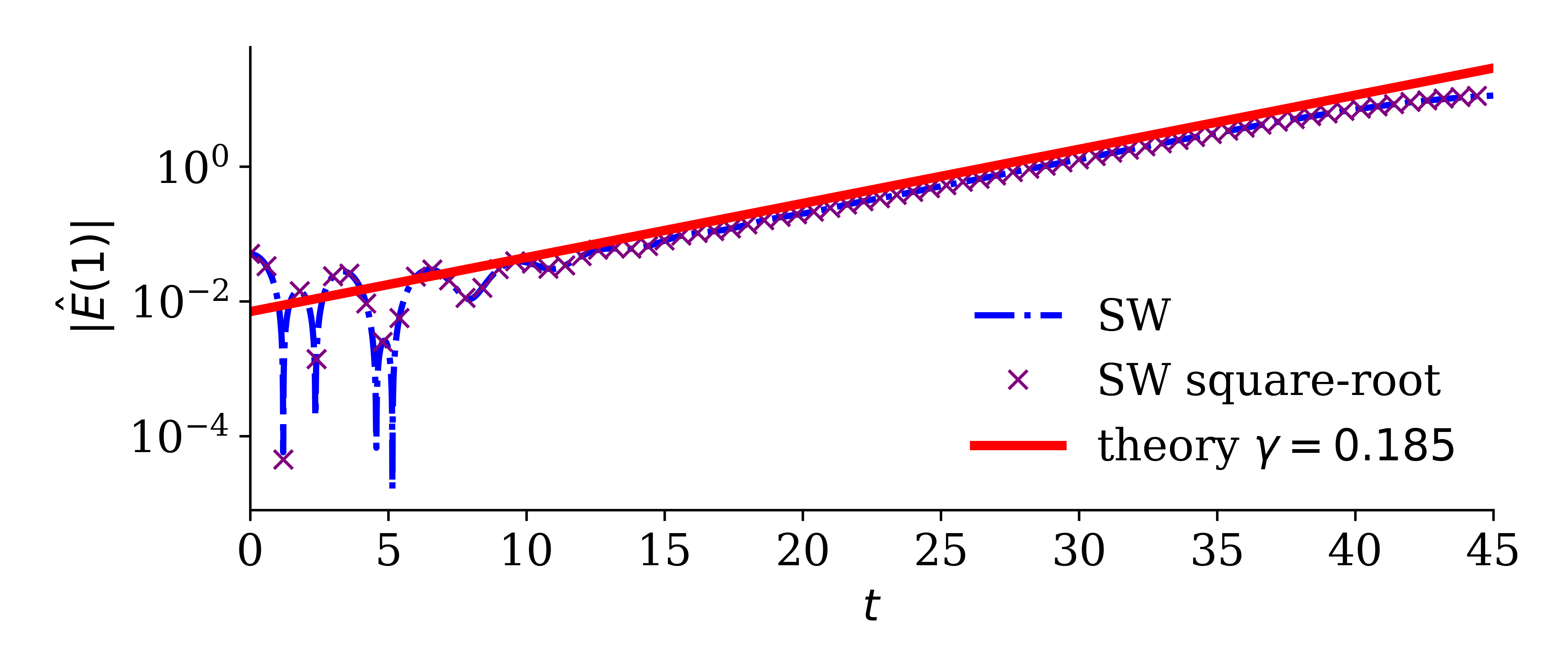}
         \label{fig:growth-rate-two-stream}
    \end{subfigure}
    \caption{Subfigure (a) shows a cross-section of the two-stream instability electron particle distribution function $f^{e}(x=\frac{\pi}{4}, v, t=45)$ via the two formulations: SW and SW square-root (with $N_{v}=101$). The SW square-root formulation appears to have more filamentation near the peak regions. Subfigure (b) presents the two-stream instability electric field amplitude as a function of time computed via the SW and SW square-root formulation with $N_{v}=101$. The electric field growth rate of the SW and SW square-root simulations agree with the theoretical growth rate~\cite[\S 3]{gary_1993_theory}.}
\end{figure}

\begin{figure}
    \centering
     \begin{subfigure}[b]{0.49\textwidth}
         \centering
         \caption{SW formulation $N_{v}=101$ (odd)}
         \includegraphics[width=\textwidth]{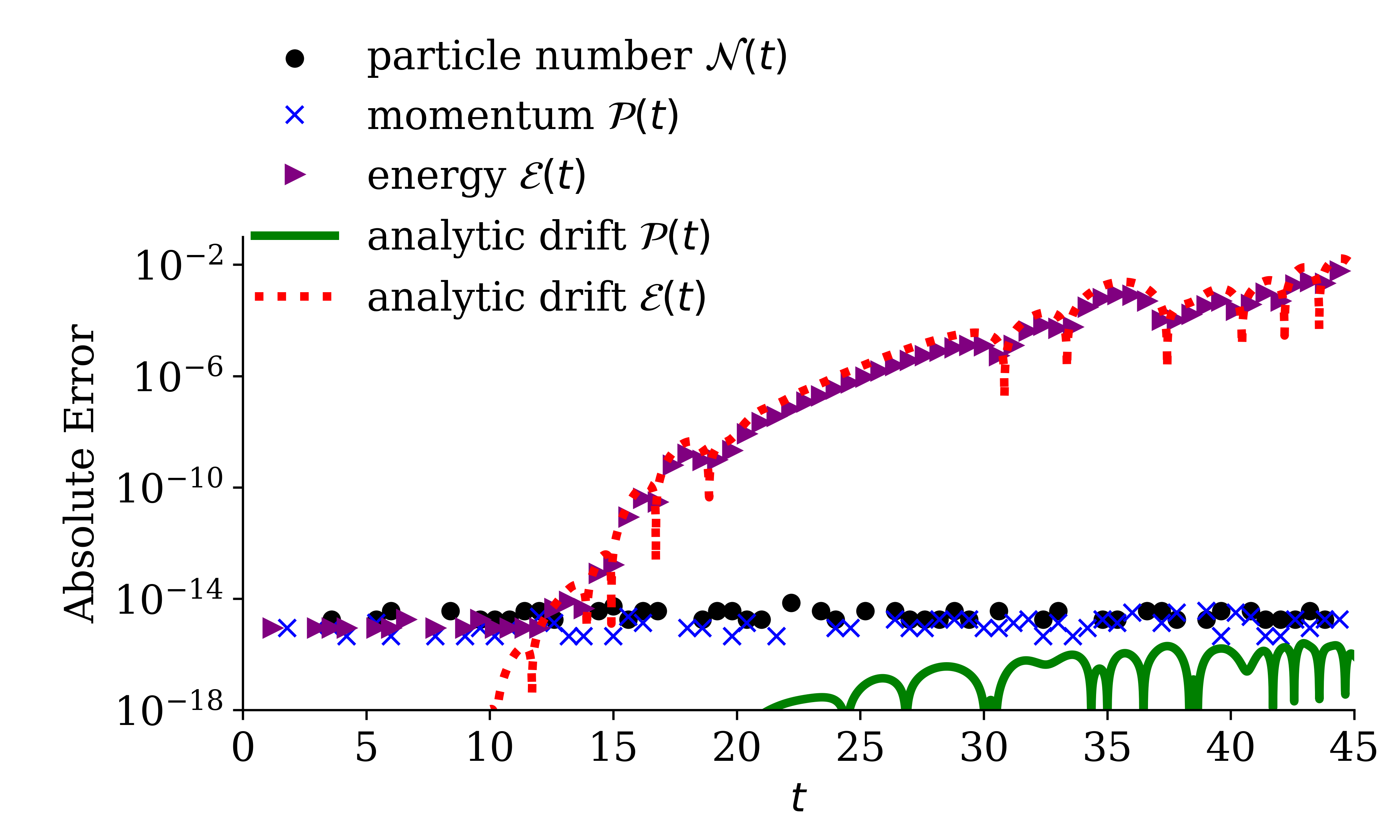}
         \label{fig:conservation-two-stream-sw-101}
     \end{subfigure}
    \begin{subfigure}[b]{0.49\textwidth}
         \centering
         \caption{SW formulation $N_{v}=100$ (even)}
         \includegraphics[width=\textwidth]{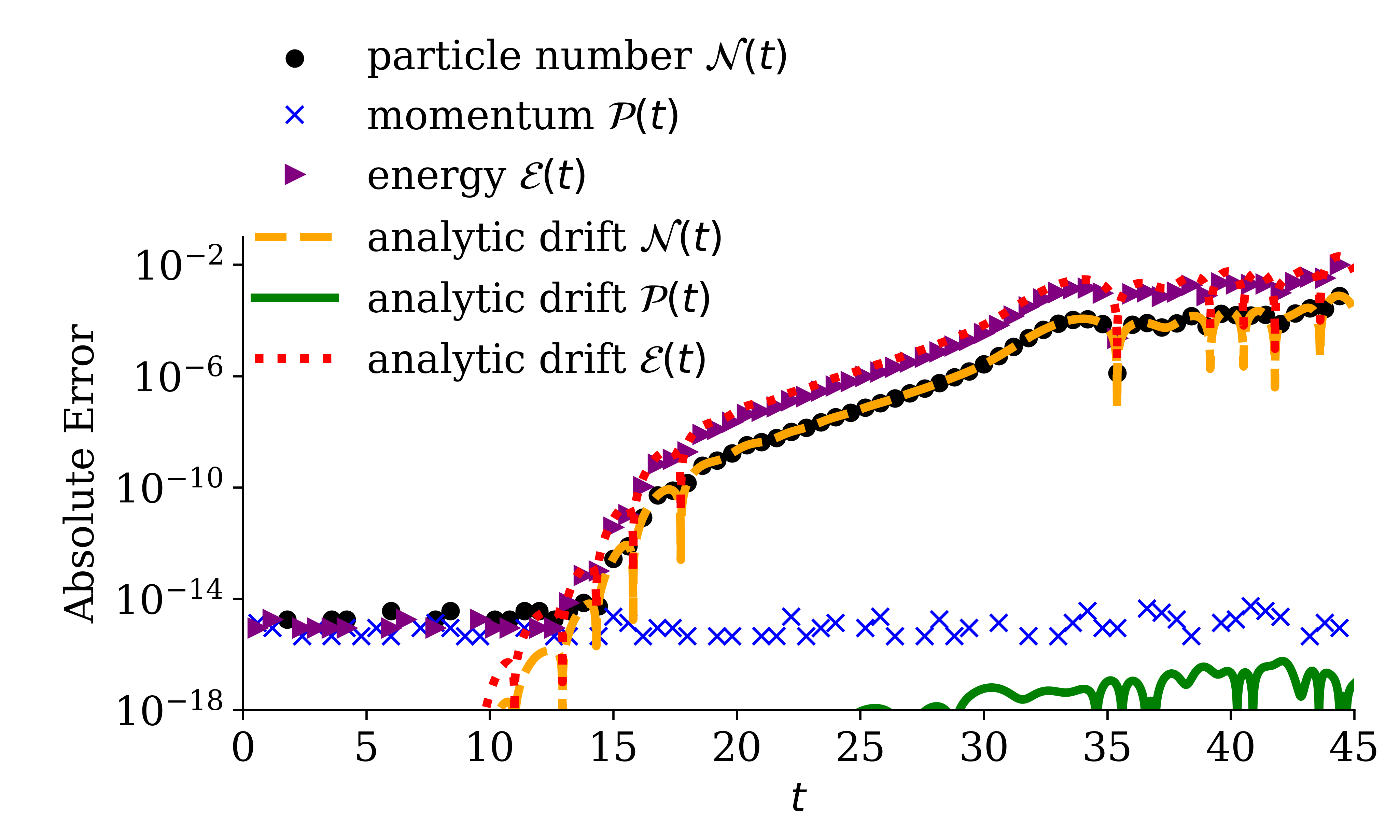}
         \label{fig:conservation-two-stream-sw-100}
     \end{subfigure}
    \begin{subfigure}[b]{0.49\textwidth}
         \centering
         \caption{SW square-root formulation $N_{v}=101$ (odd)}
         \includegraphics[width=\textwidth]{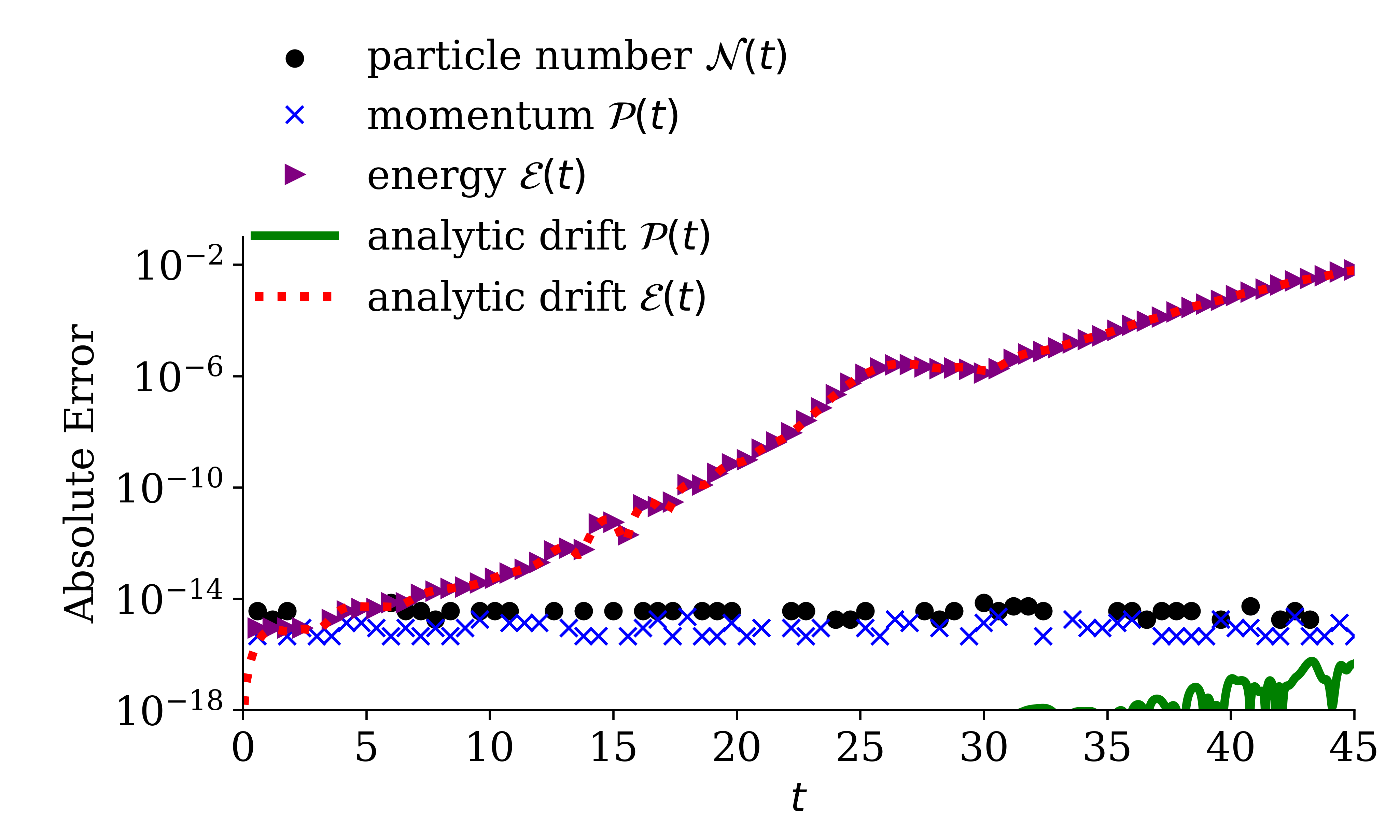}
         \label{fig:conservation-two-stream-sw-sqrt-101}
     \end{subfigure}
    \begin{subfigure}[b]{0.49\textwidth}
         \centering
         \caption{SW square-root formulation $N_v=100$ (even)}
         \includegraphics[width=\textwidth]{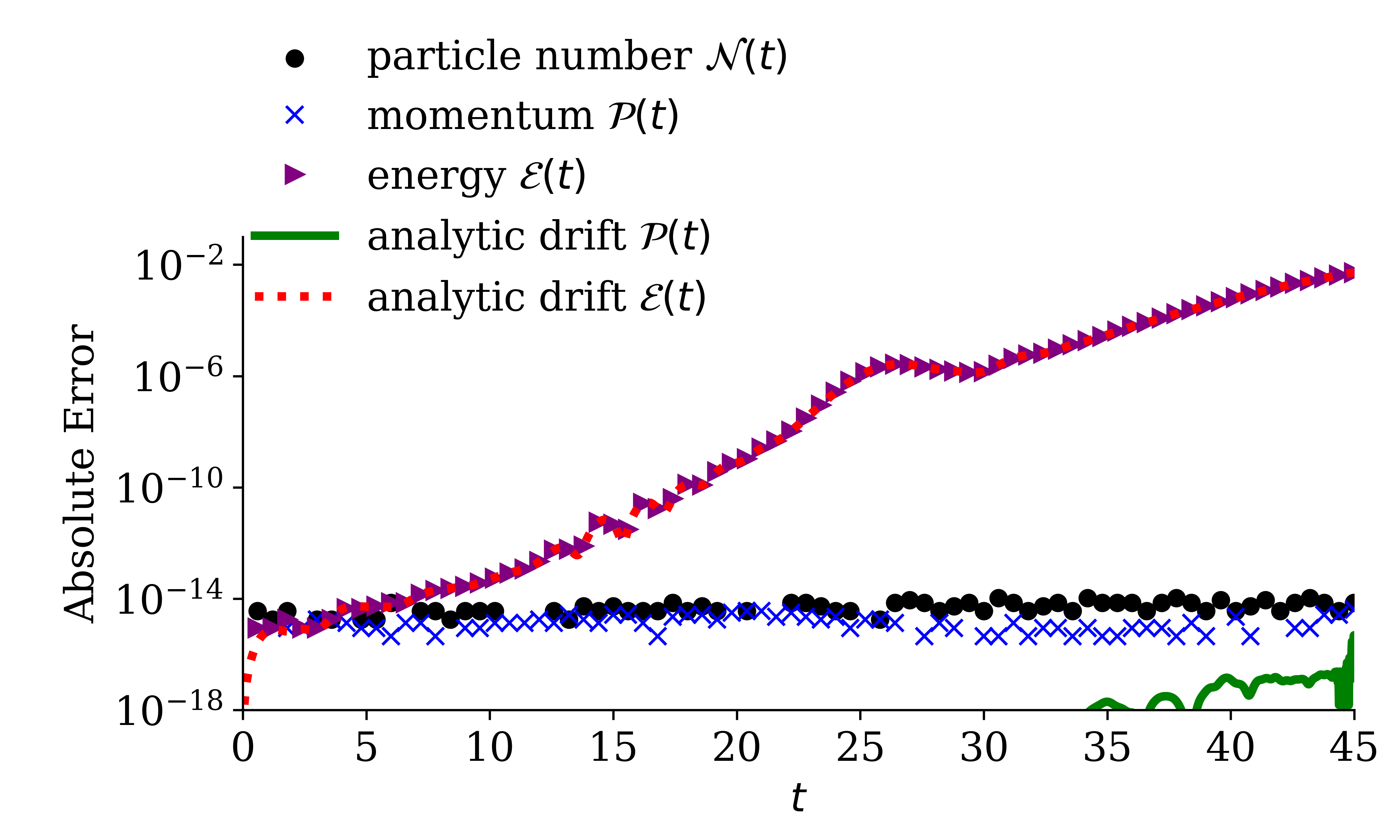}
         \label{fig:conservation-two-stream-sw-sqrt-100}
     \end{subfigure}
    \caption{Same as Figure~\ref{fig:conservation-linear-landau} for the two-stream instability. Due to the phase space symmetry in the two-stream instability, momentum is approximately conserved in both formulations. }
    \label{fig:conservation-two-stream}
\end{figure}

\subsection{Bump-on-tail Instability}\label{sec:bump-on-tail-numerical-results}
The bump-on-tail instability is an asymmetric velocity-space instability. Figure~\ref{fig:bump-on-tail-snapshots} shows the SW and SW square-root formulations distribution function evolution. The SW formulation results become negative at $t=10$ and at $t=20$ the distribution function becomes negative in the the majority of phase space along with the phase space vortex between the two beams. Figure~\ref{fig:filamentation-bump-on-tail} shows a cross-section of the electron distribution function at $t=20$, indicating that the SW and SW square-root formulations suffer from comparable filamentation effects, manifesting as artificial oscillations of the distribution function. Figure~\ref{fig:bump-on-tail-exchange-of-energy} shows the kinetic and potential energy exchange from the two formulations. Such macroscopic quantities are nearly identical for the two formulations. 

The conservation of particle number, momentum, and energy is shown in Figure~\ref{fig:conservation-bump-on-tail}. The numerical drift rates match the analytically derived drift rates for both formulations. Similar to the two-stream instability, the velocity shifting parameter $u_{e}\neq0$, thus only particle number is conserved in the SW formulation if the number of spectral terms is odd. The numerical drift rates for both formulations are comparable in magnitude. The momentum drift rate for the SW formulation (with $N_{v}=100$) and SW square-root formulations is larger than the two-stream instability drift rate due to the asymmetry in the velocity space of the bump-on-tail test case. 

\begin{figure}
    \centering
     \begin{subfigure}[b]{0.49\textwidth}
         \centering
         \caption{SW formulation}
         \includegraphics[width=\textwidth]{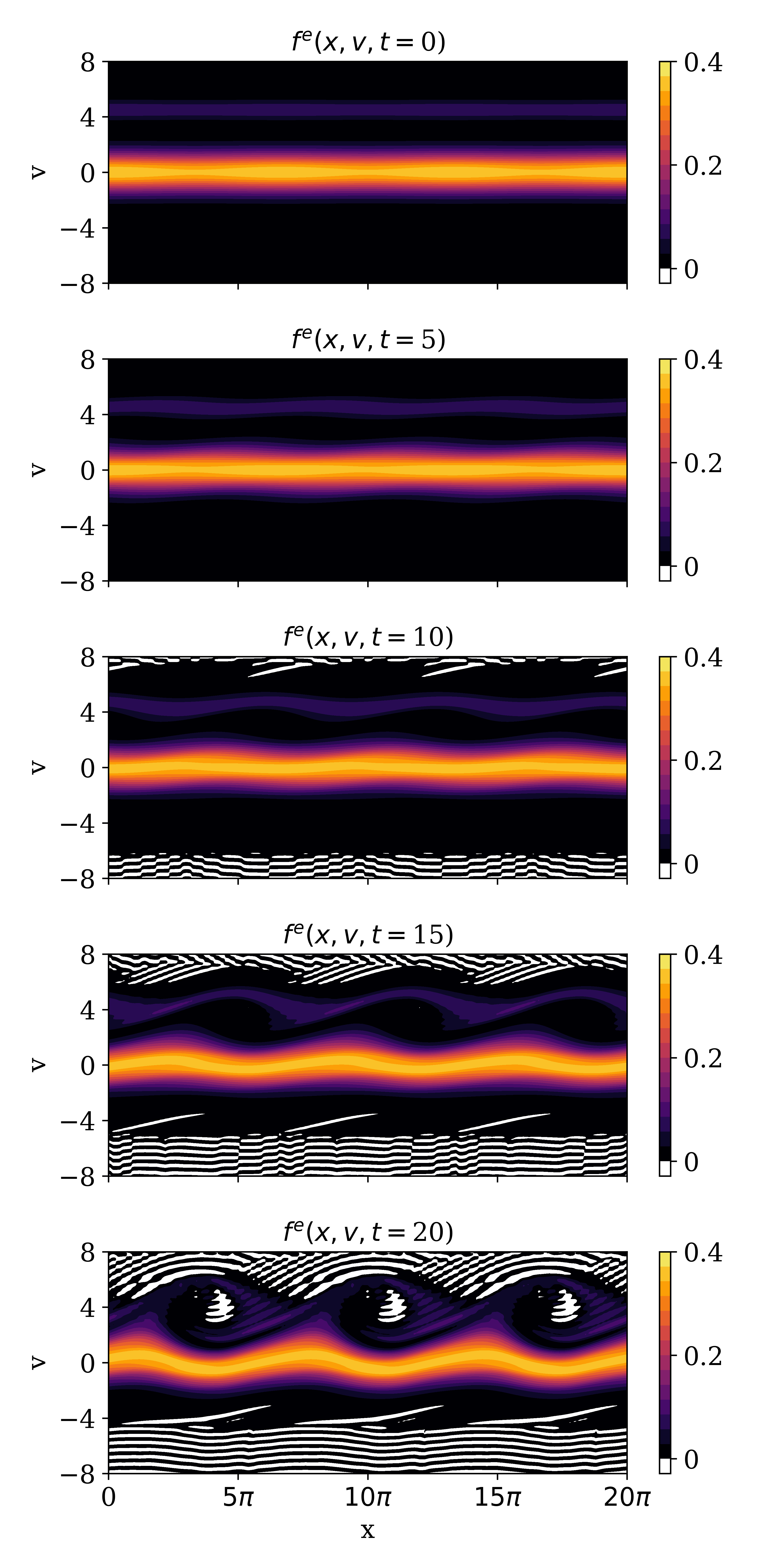}
         \label{fig:bump-on-tail-evolution-sw}
     \end{subfigure}
    \begin{subfigure}[b]{0.49\textwidth}
         \centering
         \caption{SW square-root formulation}
         \includegraphics[width=\textwidth]{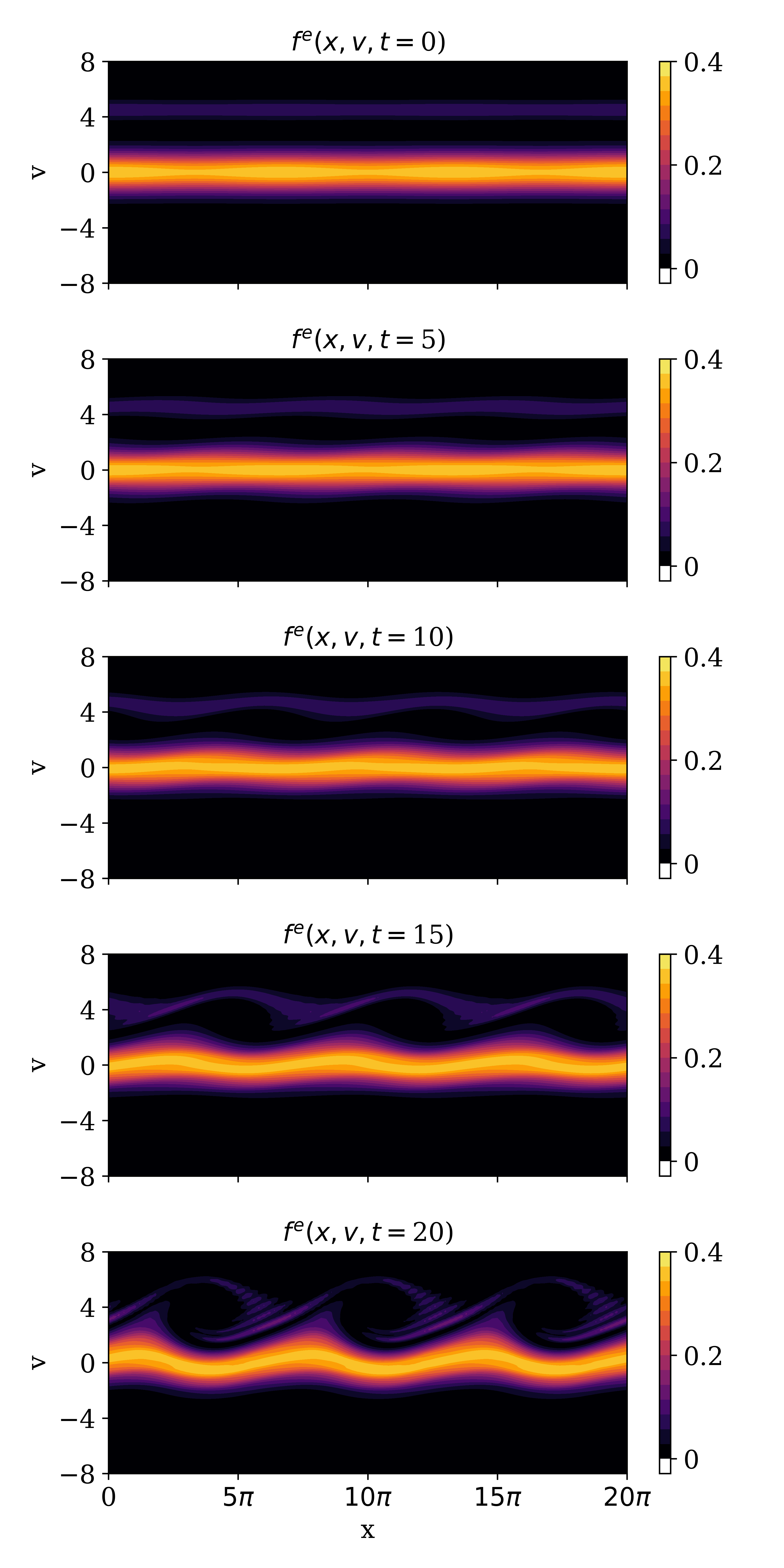}
         \label{fig:bump-on-tail-evolution-sw-sqrt}
     \end{subfigure}
    \caption{The evolution of the electron distribution function $f^{e}(x, v, t)$ for the bump-on-tail instability with $N_{v}=100$. The distribution function becomes negative in the (a)~SW formulation in the majority of phase space whereas the (b)~SW square-root formulation is positivity preserving by construction. }
    \label{fig:bump-on-tail-snapshots}
\end{figure}

\begin{figure}
    \centering
         \begin{subfigure}[b]{0.49\textwidth}
         \centering
         \caption{Bump-on-tail instability filamentation}
         \includegraphics[width=\textwidth]{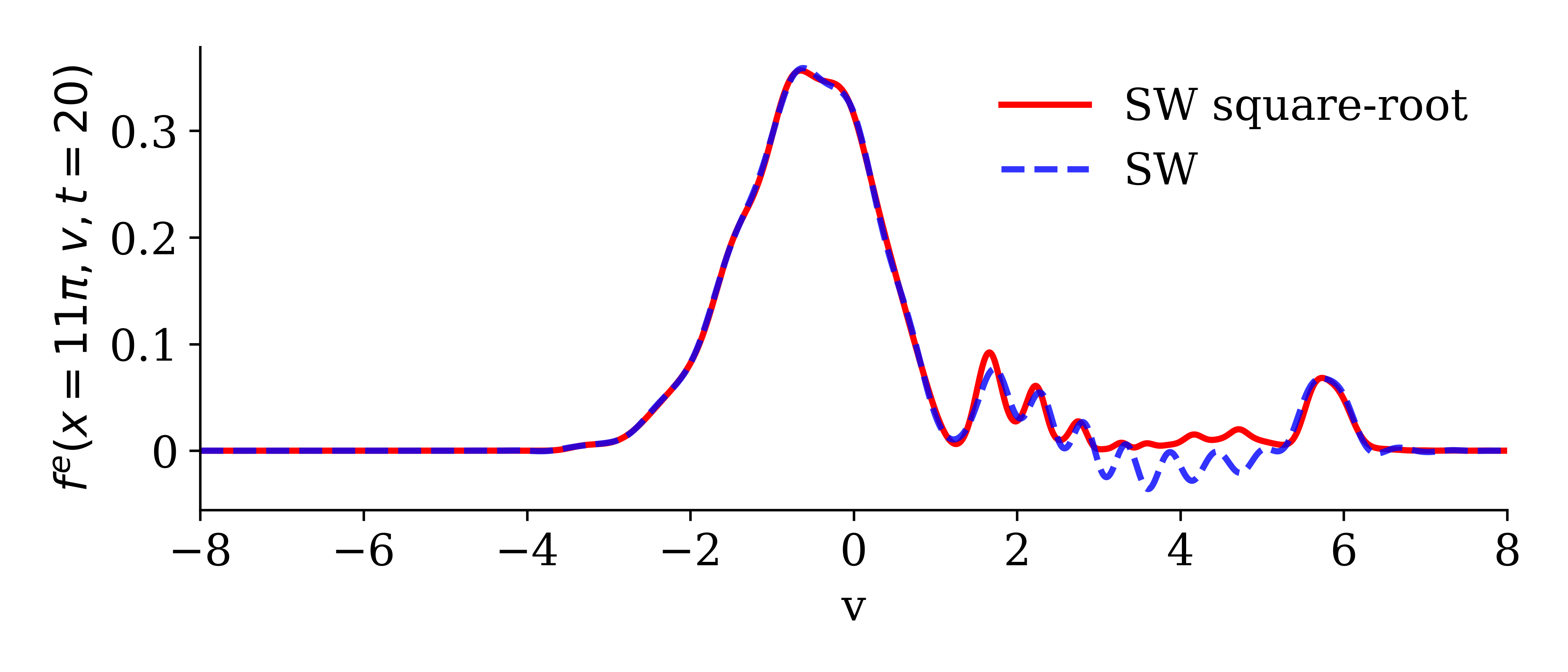}
         \label{fig:filamentation-bump-on-tail}
     \end{subfigure}
    \begin{subfigure}[b]{0.49\textwidth}
         \centering
         \caption{Bump-on-tail instability energy exchange}
         \includegraphics[width=\textwidth]{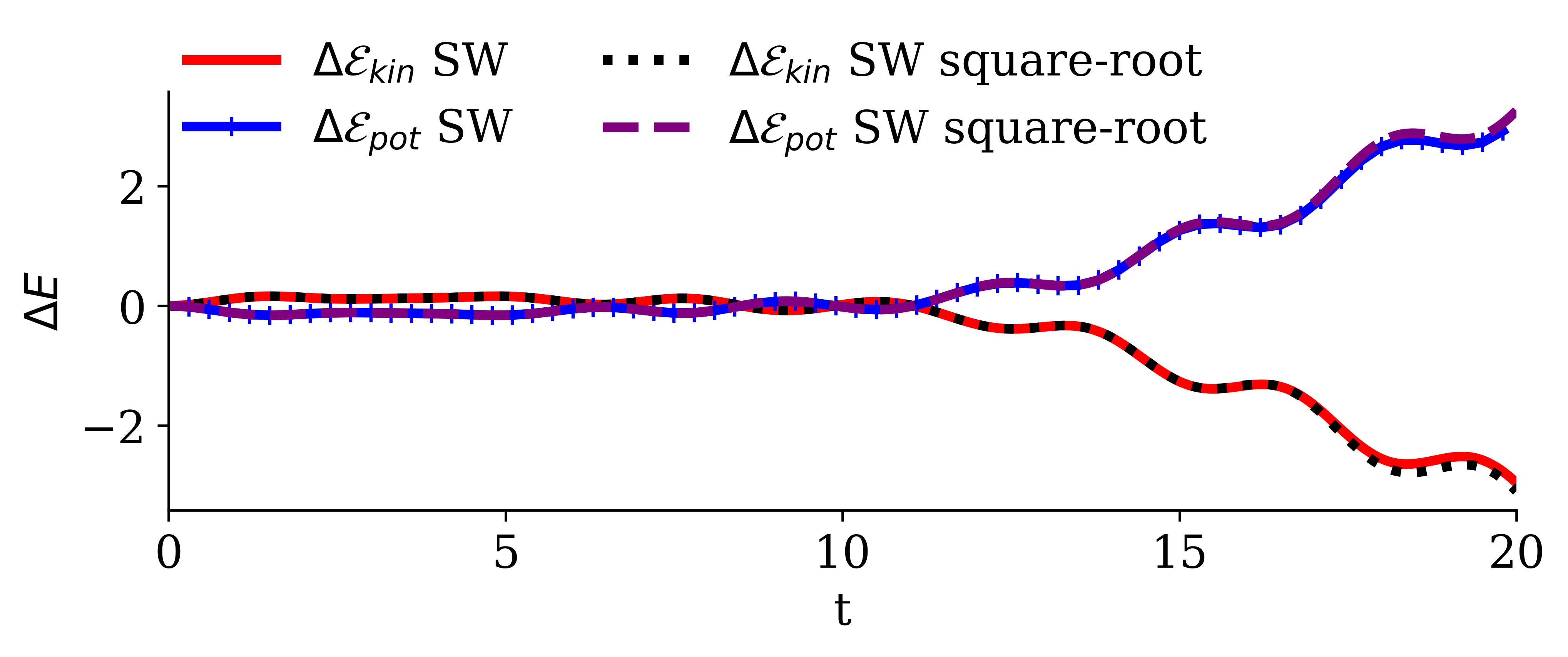}
         \label{fig:bump-on-tail-exchange-of-energy}
     \end{subfigure}
    \caption{Subfigure (a) shows a cross-section of the bump-on-tail electron particle distribution function $f^{e}(x=11\pi, v, t=20)$ via the two formulations: SW and SW square-root (with $N_{v}=100$). The filamentation is relatively comparable in the SW and SW square-root formulations. Subfigure (b) shows the evolution of the transferred kinetic and potential energy for the bump-on-tail instability with $N_{v}=100$. The SW and SW square-root macroscopic quantities--kinetic and potential energy-- are nearly identical. }
\end{figure}

\begin{figure}
    \centering
     \begin{subfigure}[b]{0.49\textwidth}
         \centering
         \caption{SW formulation $N_{v}=101$ (odd)}
         \includegraphics[width=\textwidth]{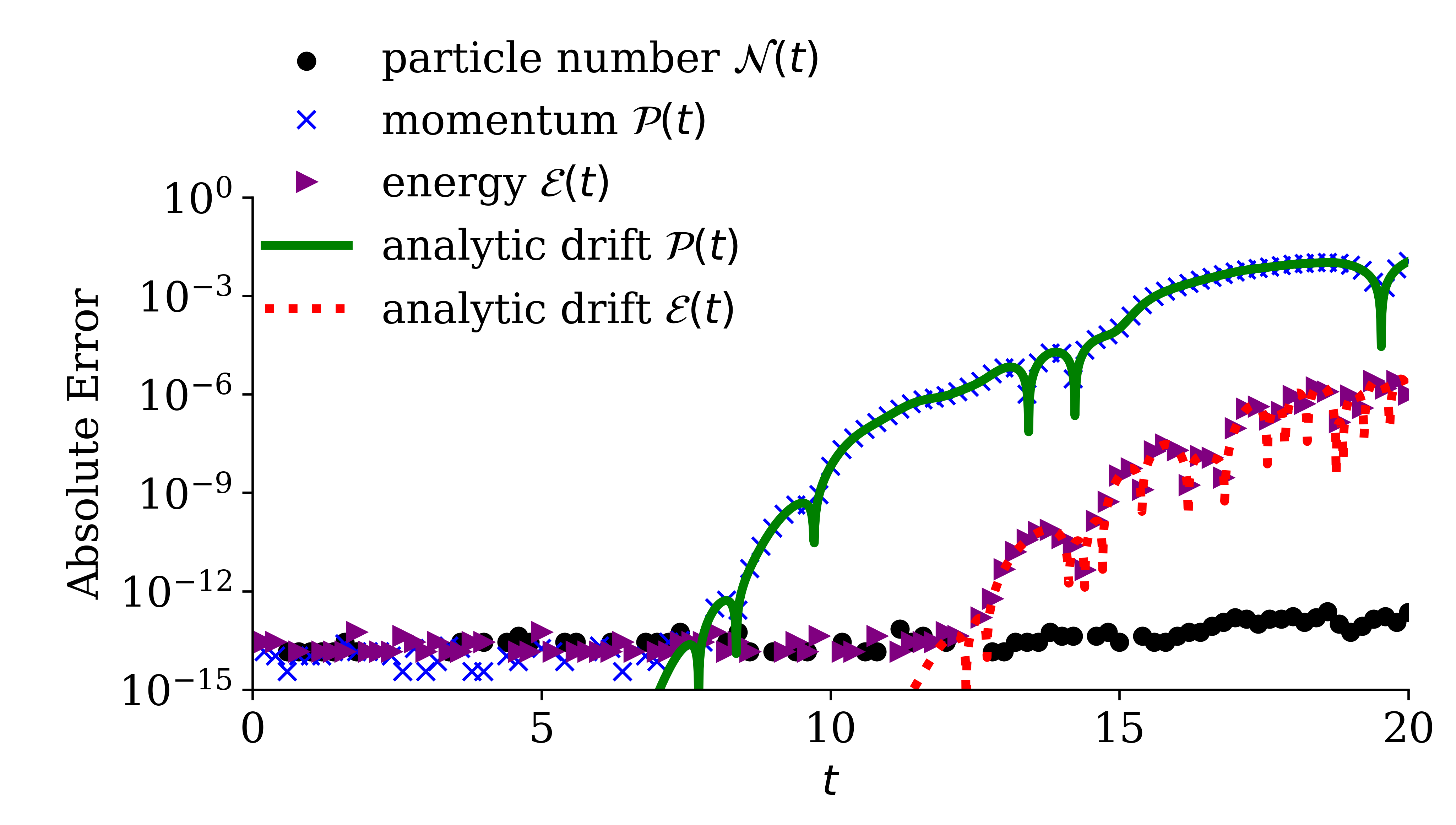}
         \label{fig:conservation-bump=on-tail-sw-101}
     \end{subfigure}
    \begin{subfigure}[b]{0.49\textwidth}
         \centering
         \caption{SW formulation $N_{v}=100$ (even)}
         \includegraphics[width=\textwidth]{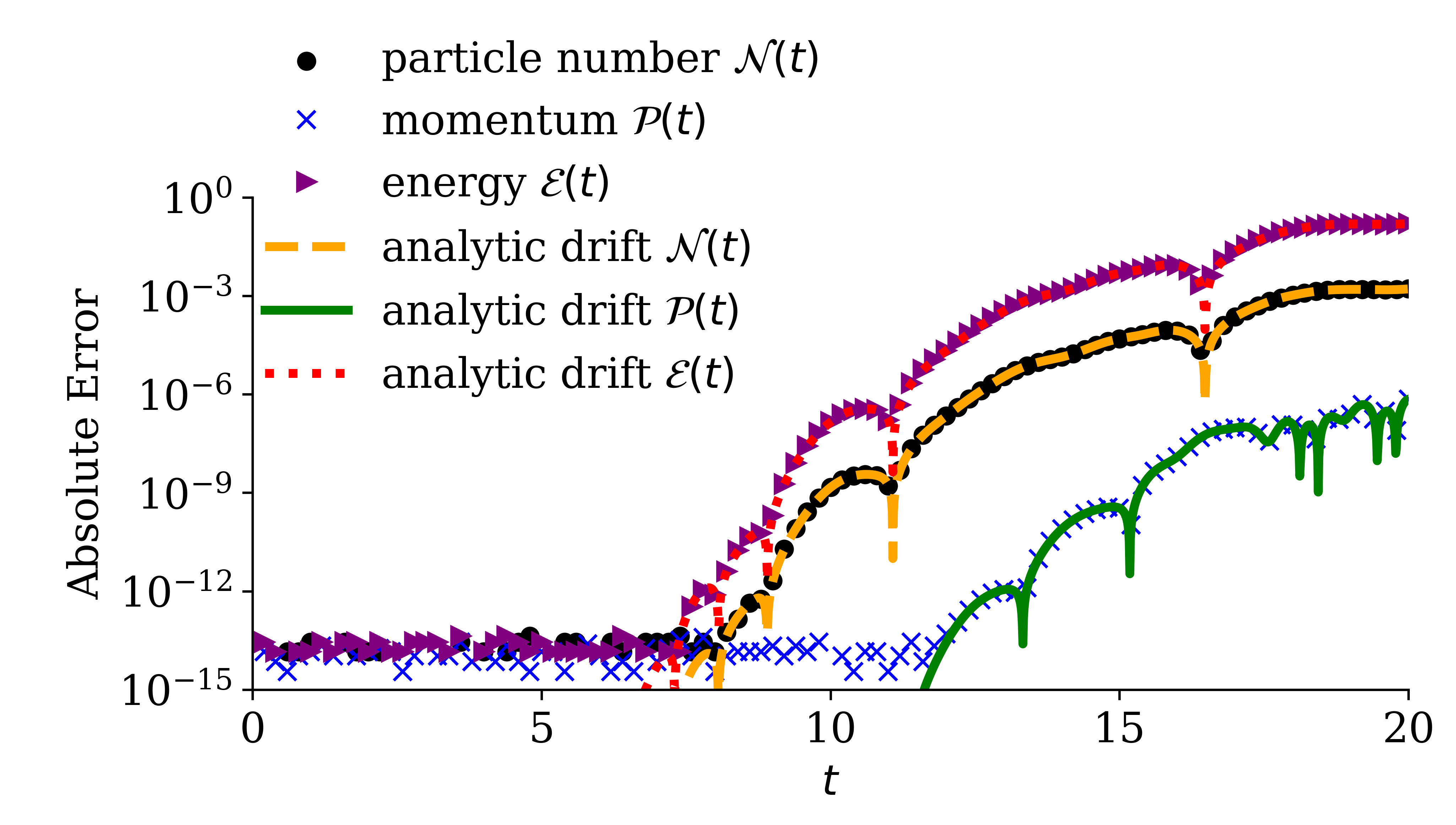}
         \label{fig:conservation-bump-on-tail-sw-100}
     \end{subfigure}
    \begin{subfigure}[b]{0.49\textwidth}
         \centering
         \caption{SW square-root formulation $N_{v}=101$ (odd)}
         \includegraphics[width=\textwidth]{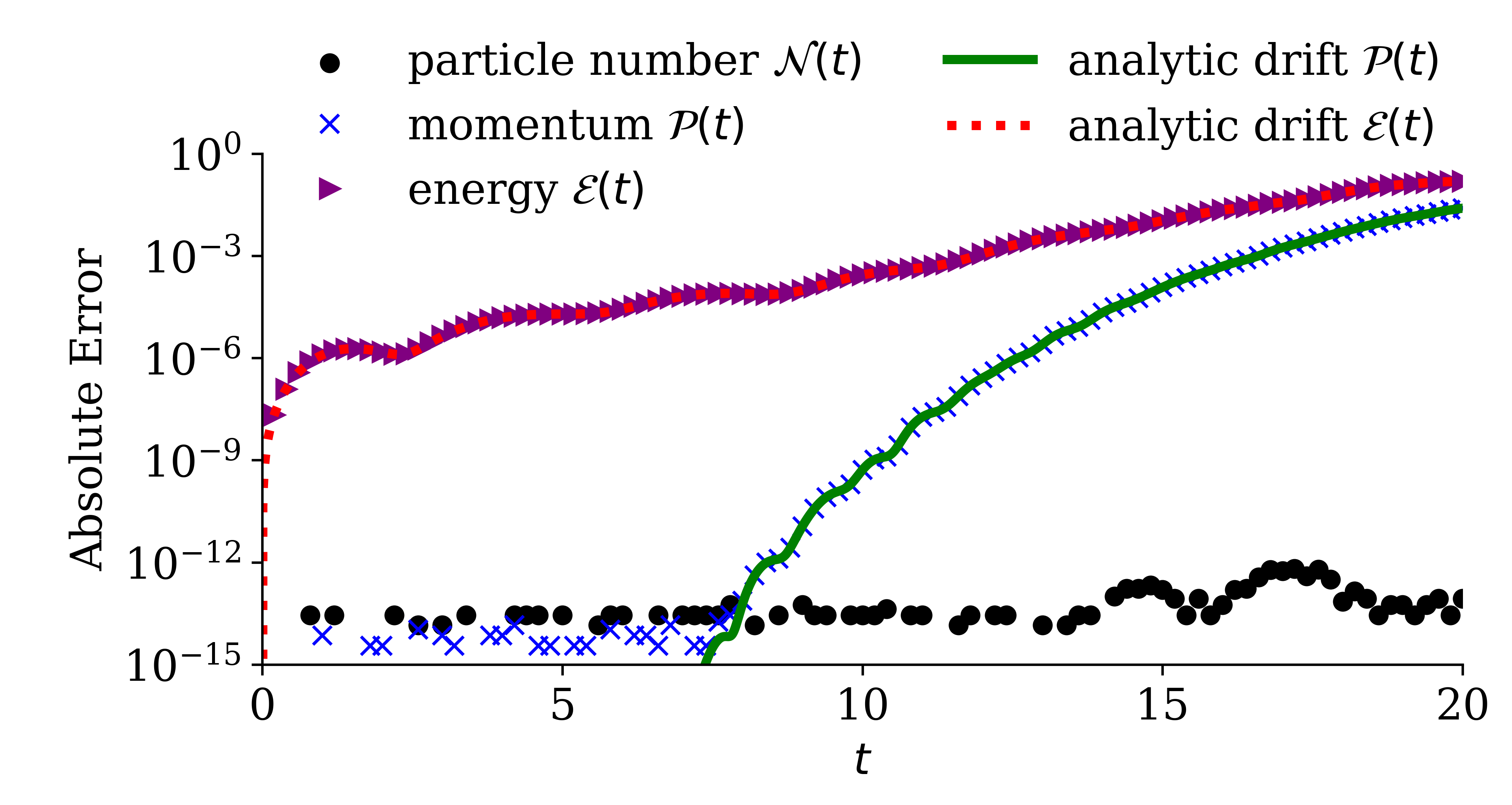}
         \label{fig:conservation-bump-on-tail-sw-sqrt-101}
     \end{subfigure}
    \begin{subfigure}[b]{0.49\textwidth}
         \centering
         \caption{SW square-root formulation $N_v=100$ (even)}
         \includegraphics[width=\textwidth]{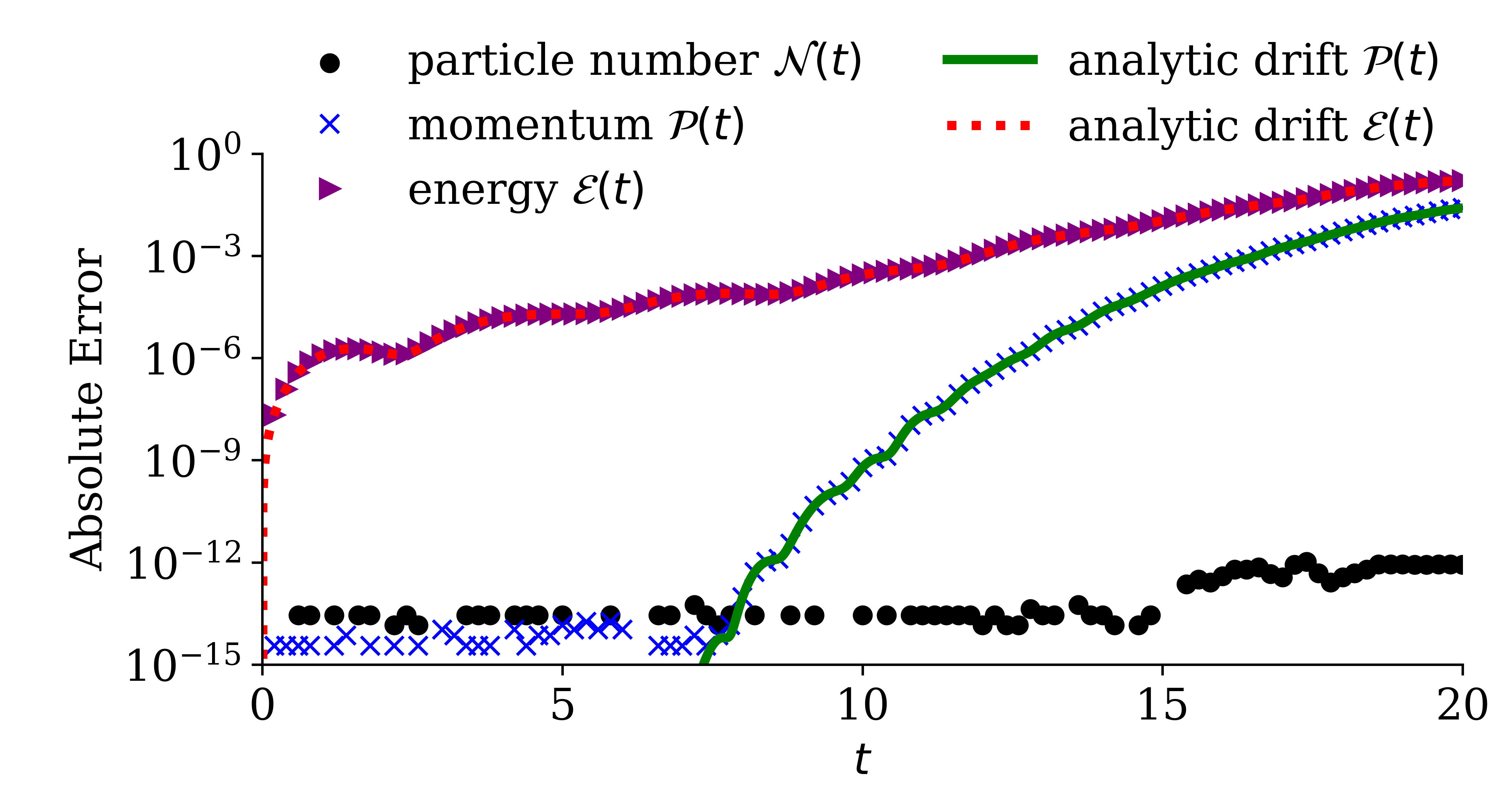}
     \end{subfigure}
    \caption{Same as Figure~\ref{fig:conservation-linear-landau} for the bump-on-tail instability. Due to the asymmetry in the velocity space in the bump-on-tail instability, the momentum drift in both formulations is more pronounced than in the previous symmetric test cases. }
    \label{fig:conservation-bump-on-tail}
\end{figure}

\subsection{Ion Acoustic Wave} \label{sec:ion-acoustic-numerical-results}
The ion acoustic wave is a multi-scale physics benchmark problem driven by electron thermal pressure and ion inertia. The ion-acoustic wave dispersion relation resembles an ordinary sound wave\footnote{
When the ion acoustic wave has a wavelength much longer than the Debye length $k \lambda_{D} \ll 1$ its dispersion relation is given by $\omega^2 = c_{s}^2 k^2$, where $c_{s} \coloneqq \sqrt{k_{B}(T^{e} + 3T^{i})/m^i}$ is the ion acoustic wave speed~\cite[\S 2]{gary_1993_theory}.}. We perturb the initial ion and electron distributions, such that
\begin{equation}
    f^{s}(x, v, t=0) = \frac{1}{\sqrt{2\pi} \alpha^{s}} \left(1 + \epsilon \cos\left(\frac{2\pi}{L} x\right) \right)\exp\left(-\frac{v^2}{2(\alpha^s)^2}\right),
\end{equation}
where $s=\{`e$', `$i$'$\}$, $L=10, t_{f}=600$, and $\epsilon=0.01$. We set the ratio between electron and ion temperature to $T^e/T^i = 10$ and mass to $m^e/m^i = 1/1836$, resulting in $\alpha^e = 1$ and $\alpha^i = 1/135$. Figure~\ref{fig:density-ion-acoustic-mag} shows the magnitude of the ion density first harmonic of the SW and SW square-root formulations with $N_{v}=N_{x}=51$ and $\Delta t =0.05, 1$. Figures~\ref{fig:electron-density-ion-acoustic} and~\ref{fig:ion-density-ion-acoustic} show the electron and ion density evolution for the SW square-root formulation with $\Delta t = 1$. The results with different $\Delta t$ are almost identical, showing that the implicit solver can step over the fast electron frequency in the system. Moreover, the numerical period of the ion density is approximately~389 (normalized to the electron plasma frequency) for both formulations, which is close to the analytic linear estimate of~375. As we increase the time step $\Delta t$, the results become less accurate, but as expected, the method remains stable. Overall, the ion acoustic wave example highlights one of the advantages (aside from conservation properties) of using an implicit temporal integrator which allows one to step over temporal scales. 

\begin{figure}
\centering
\begin{subfigure}[b]{0.49\textwidth}
\centering
\caption{Ion density amplitude}
\label{fig:density-ion-acoustic-mag}
\includegraphics[width=\textwidth]{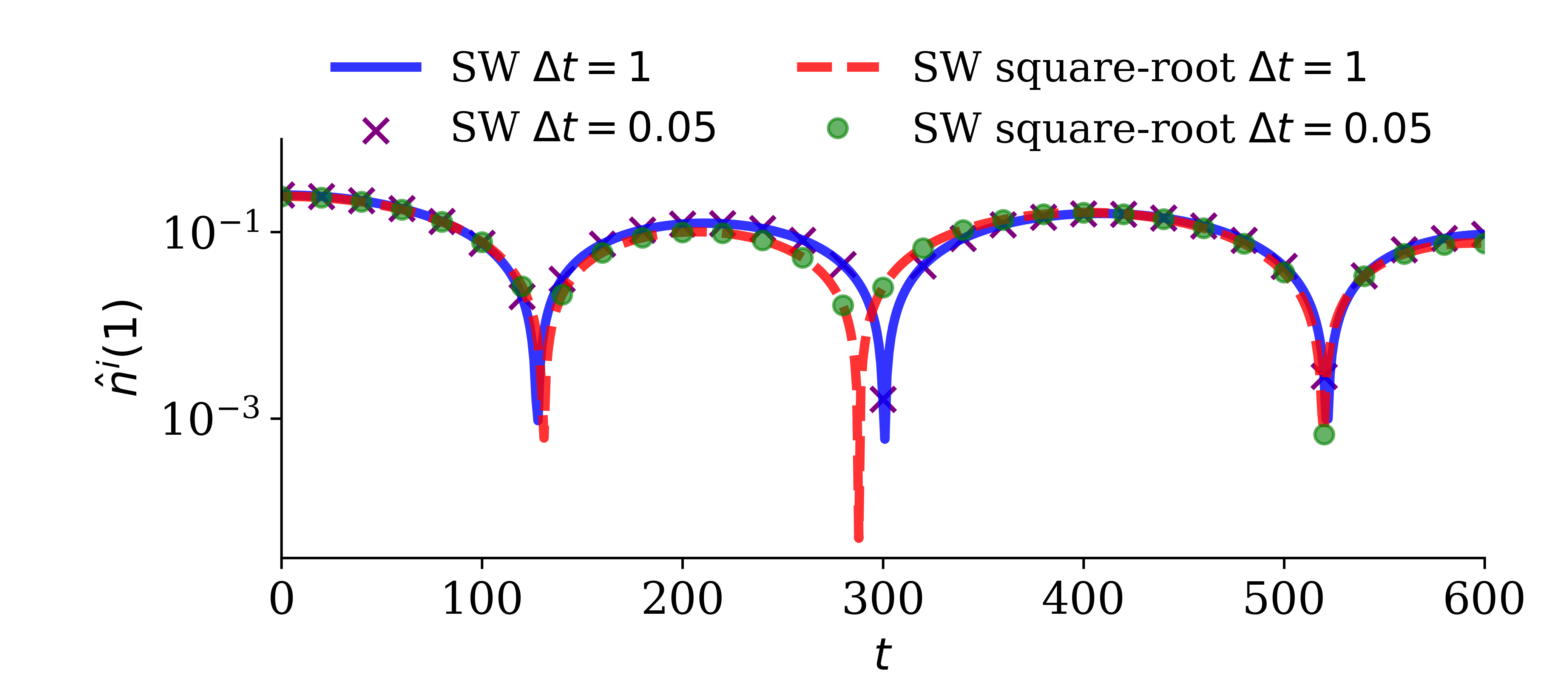}
\end{subfigure}
\hspace{-12pt}
\begin{subfigure}[b]{0.26\textwidth}
\centering
\caption{Electron density $n^{e}(x, t)$}
\label{fig:electron-density-ion-acoustic}
\includegraphics[width=\textwidth]{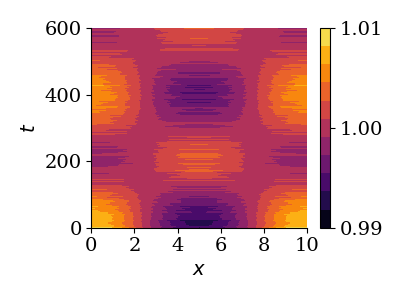}
\end{subfigure}
\hspace{-12pt}
\begin{subfigure}[b]{0.26\textwidth}
\centering
\caption{Ion density $n^{i}(x, t)$}
\label{fig:ion-density-ion-acoustic}
\includegraphics[width=\textwidth]{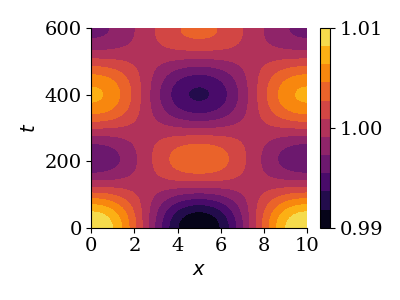}
\end{subfigure}
\caption{Subfigure (a) shows the magnitude of the ion density first harmonic of the SW and SW square-root formulations with $\Delta t = 0.05, 1$ and $N_{v}=N_{x} = 51$. Subfigure (b/c) shows the electron and ion density evolution for the SW square-root formulation with $\Delta t = 1$. The results show that the numerical scheme can successfully step over fast electron frequency in the system. }
\end{figure}

\section{Conclusions} \label{sec:conclusions}
We proposed the anti-symmetric discretization of the one-dimensional Vlasov-Poisson equations, which is based on the symmetrically-weighted Hermite spectral expansion in velocity, centered finite differencing in space, and an implicit Runge-Kutta temporal integrator. We showed that the method preserves the anti-symmetric structure of the advection operator (or equivalently the canonical Poisson bracket) of the Vlasov equation, which results in a stable method. The anti-symmetric discretization is applied to two formulations: SW and SW square-root formulation. The former discretizes the standard Vlasov-Poisson equations and the latter discretizes a continuous square-root variable transformation of the governing equations. The SW square-root formulation is positivity preserving by construction. We derived the conservation properties of the two formulations. The SW formulation conserves particle number and energy if the number of velocity spectral terms is odd and momentum if the number of spectral terms is even. Moreover, the conservation of momentum and energy only holds in the SW expansion if the velocity shifting parameter is set to zero. Alternatively, the SW square-root formulation conserves particle number independently of the number of spectral terms and velocity parameterization. The conservation drift rates for both formulations (besides the total energy drift in the SW square-root formulation) only depend on the last coefficient in the Hermite spectral expansion, which highlights that increasing the number of velocity spectral terms improves conservation properties and thus the validity of long-time simulations. Overall, this study introduces the construction of structure-preserving and stable Vlasov-Poisson solvers. 

A future goal of ours is to study techniques to mitigate filamentation. Typical filamentation strategies include (1)~artificial collisions, (2)~filtering, and (3)~flux-limiter techniques. However, applying such standard strategies to the SW and SW square-root formulations will break conservation laws and the method's anti-symmetric structure since its invariants depend on all the coefficients in the expansion. We plan to study such existing strategies and their influence on the SW and SW square-root formulations in the future. Another avenue of future work involves the adaptivity in time and space of the Hermite parameters $u^{s}$ and $\alpha^{s}$ in Eq.~\eqref{hermite-basis-function-definition}, which has been derived for the AW formulation with time-dependent parameters in~\cite{pagliantini_2023_adaptive}. We plan to investigate such adaptive algorithms in the SW and SW square-root framework and their impact on spectral convergence, stability, conservation, and anti-symmetry. 

\newpage
\appendix
\section{Anti-symmetric Representation via a Fourier Spectral Expansion in Space} \label{sec:Appendix-A}
We derive the Hermite-Fourier spectral discretization of the SW and SW square-root formulations and show that a Fourier discretization in space preserves the anti-symmetric structure of the advection operator in the Vlasov equation~\eqref{advection-operator-continuum}. We begin by introducing the Fourier basis functions, such that
\begin{equation}\label{fourier-basis-function-definition}
    \eta_{k}(x) \coloneqq \exp \left(\frac{2\pi i k x}{\ell}\right), \qquad \forall k \in \mathbb{N}_{0}.
\end{equation}
Since the spatial domain $\Omega_{x} \coloneqq [0, \ell]$ is bounded, the Fourier basis functions~$\{\eta_{k}(x)\}_{k \in \mathbb{N}_{0}}$ form an orthogonal basis with the following orthogonality relation 
\begin{equation}\label{orthogonal-property-fourier}
    \frac{1}{\ell} \int_{0}^{\ell} \eta_{k}(x) \eta_{m}(x) \mathrm{d} x= \delta_{k+m, 0}, \qquad \forall k, m \in \mathbb{Z}.
\end{equation}
We approximate the Hermite expansion coefficient $C_{n}^{s} \in \mathcal{W}$ in Eqns.~\eqref{expansion-seperation-of-variables} and~\eqref{spectral-approximation-sqrt} and the electric field $E \in \mathcal{W}$ in Eqns.~\eqref{vlasov-continuum}-\eqref{poisson-continuum} via a spatial Fourier expansion, i.e. 
\begin{equation*}
    C_{n}^{s}(x, t) \approx C_{n}^{s, N_{x}}(x, t) = \sum_{k=-N_{x}}^{N_{x}} C_{n, k}^{s}(t) \eta_{k}(x) \qquad \text{and} \qquad  E(x, t) \approx E^{N_{x}}(x, t) = \sum_{k=-N_{x}}^{N_{x}} E_{k}(t) \eta_{k}(x),
\end{equation*}
where the Fourier basis is defined in Eq.~\eqref{fourier-basis-function-definition}. The product $E^{N_{x}}(x, t)C_{n}^{s, N_{x}}(x, t)$ can be described as a Fourier convolution, such that
\begin{equation*}
    E^{N_{x}}(x, t) C_{n}^{s, N_{x}}(x, t) = \sum_{k=-2N_{x}}^{2N_{x}} \eta_{k}(x) [\mathbf{E}(t) \ast \mathbf{C}_{n}^{s}(t)][k],
\end{equation*}
where 
\begin{align*}
    \mathbf{E}(t) &= [E_{-N_{x}}(t), E_{-N_{x}+1}(t), \ldots,  E_{0}(t), \ldots, E_{N_{x}-1}(t), E_{N_{x}}(t)]^{\top} \in \mathbb{C}^{2N_{x}+1},\\
    \mathbf{C}_{n}(t) &= [C_{n, -N_{x}}^{s}(t), C_{n, -N_{x}+1}^{s}(t), \ldots,  C_{n, 0}^{s}(t), \ldots, C_{n, N_{x}-1}^{s}(t), C^{s}_{n, N_{x}}(t)]^{\top} \in \mathbb{C}^{2N_{x}+1},
\end{align*}
and $\ast$ denotes the following convolution 
\begin{equation}\label{convolution-definition}
    [\mathbf{E}(t) \ast \mathbf{C}_{n}^{s}(t)][k] = \sum_{j=-N_{x}}^{N_{x}} E_{k-j}(t) C_{n, j}^{s}(t)
\end{equation}
Multiplying the Vlasov equation~\eqref{f-vlasov-pde} by $\eta_{-m}(x)$, integrating over the spatial coordinate $x$, and using the orthogonality relation of the Fourier basis functions in Eq.~\eqref{orthogonal-property-fourier}, we get 
\begin{align}\label{f-vlasov-ode-fourier}
    \frac{\mathrm{d}}{\mathrm{d} t} \mathbf{C}_{n}^{s}(t) &=- \mathbf{D} \left(\alpha^{s} \sqrt{\frac{n}{{2}}} \mathbf{C}_{n-1}^{s}(t) + \alpha^{s} \sqrt{\frac{n+1}{2}} \mathbf{C}_{n+1}^{s}(t) + u^{s}  \mathbf{C}_{n}^{s}(t)\right)\\
    &+\frac{q^{s}}{m^{s}\alpha^{s}} \mathbf{E}_{\mathrm{conv}}(t)\left(\sqrt{\frac{n}{2}} \mathbf{C}_{n-1}^{s}(t) - \sqrt{\frac{n+1}{2}} \mathbf{C}_{n+1}^{s}(t)\right), \nonumber
\end{align}
where 
\begin{equation}\label{definiton-D-derivative}
    \mathbf{D} = \frac{2\pi i}{\ell} \text{diag}([-N_{x}, -N_{x}+1, \ldots, 0 , \ldots,  N_{x}-1, N_{x}]) \in \mathbb{C}^{(2N_{x}+1) \times (2N_{x}+1)}
\end{equation}
and the convolution operation defined in Eq.~\eqref{convolution-definition} can be constructed as a matrix multiplication, where the electric field is converted into a Toeplitz matrix
\begin{equation}\label{definition-E-convolve}
    \mathbf{E}_{\mathrm{conv}}(t) = \left[\begin{array}{cccccc}
        E_{0}(t)  & E_{-1}(t) & \ldots & E_{-N_{x}}(t)&  0 & \ldots\\
        E_{1}(t) & E_{0}(t) & E_{-1}(t) & \ldots & E_{-N_{x}}(t) &  0  \\
        & \ddots & \ddots & \ddots & \ddots &\\
        E_{N_{x}}(t) & E_{N_{x}-1}(t) & \ldots & E_{0}(t) & \ldots & E_{-N_{x}}(t)  \\
        & \ddots & \ddots & \ddots & \ddots &\\
        0 & E_{N_{x}}(t)  & E_{N_{x}-1}(t) & \ldots & E_{0}(t)&  E_{-1}(t) \\
        \ldots & 0 & E_{N_{x}}(t) & E_{N_{x}-1}(t) & \ldots &  E_{0}(t) \\
    \end{array}\right] \in \mathbb{C}^{(2N_{x}+1) \times (2N_{x}+1)}.
\end{equation}
Since the electric field $E \in \mathcal{W}$ is a real-valued function, then  $(\mathbb{C}^{2N_{x}+1},\mathbf{E}_{\mathrm{conv}}(t))= (\mathbb{C}^{2N_{x}+1}, \mathbf{E}_{\mathrm{conv}}(t)^{*})$, defined in Eq.~\eqref{definition-E-convolve}, is a complex symmetric matrix. Similarly, the derivative diagonal matrix $(\mathbb{C}^{2N_{x}+1}, \mathbf{D}) = (\mathbb{C}^{2N_{x}+1},-\mathbf{D}^{*})$, defined in Eq.~\eqref{definiton-D-derivative}, is a complex anti-symmetric matrix (also known as skew-Hermitian). 

The Fourier spectral discretization of the SW Poisson equation~\eqref{f-poisson-pde} is given by 
\begin{equation*}
     E_{k}(t) = \frac{\ell}{2\pi i k} \sum_{s} q^{s} \alpha^{s} \sum_{n=0}^{N_{v}-1} \mathcal{I}_{n} C_{n, k}^{s}(t),
\end{equation*}
and the SW square-root Poisson equation~\eqref{sqrt-f-poisson} reads as 
\begin{equation*}
    E_{k}(t) = \frac{\ell}{2\pi i k} \sum_{s} q^{s} \alpha^{s} \sum_{n=0}^{N_{v}-1} [\mathbf{C}_{n}^{s}(t) \ast \mathbf{C}_{n}^{s}(t)][k],
\end{equation*}
with $E_{0}(t) = 0$ from the uniqueness condition, i.e. $\int_{0}^{\ell} E(x, t) \mathrm{d} x= 0$. 
\begin{theorem}
    A Fourier spatial spectral expansion in Eq.~\eqref{vlasov-vector-form-pde} preserves the anti-symmetric structure of the advection operator~$(\mathcal{W}^{N_{v}}, A^{s}_{v}(x, t))$ in Eq.~\eqref{vlasov-vector-form-pde}, and thus preserves the anti-symmetric structure of the advection operator~$(\mathcal{V}, A^{s}(x, v, t))$ in Eq.~\eqref{vlasov-continuum-anti-symmetric-form}.
\end{theorem}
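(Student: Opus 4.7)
The plan is to stack the Fourier coefficients into the block-column vector $\mathbf{\Psi}_F^s(t) = [\mathbf{C}_0^s(t)^\top, \mathbf{C}_1^s(t)^\top, \ldots, \mathbf{C}_{N_v-1}^s(t)^\top]^\top \in \mathbb{C}^{N_v(2N_x+1)}$ and to rewrite Eq.~\eqref{f-vlasov-ode-fourier} together with closure-by-truncation $\mathbf{C}_{N_v}^s(t)=0$ as a single block-tridiagonal ODE $\frac{\mathrm{d}}{\mathrm{d}t}\mathbf{\Psi}_F^s = \mathbf{A}_F^s(t)\,\mathbf{\Psi}_F^s$. By direct inspection of Eq.~\eqref{f-vlasov-ode-fourier}, the block structure of $\mathbf{A}_F^s(t)$ mirrors the continuous operator $A_v^s(x,t)$ from the proof of Theorem~\ref{thm:theorem-velocity-antisymmetry}: the diagonal blocks are $-u^s\mathbf{D}$, the subdiagonal blocks in position $(n,n-1)$ are $\sqrt{n/2}\,\mathbf{G}_-^s(t)$, and the superdiagonal blocks in position $(n,n+1)$ are $-\sqrt{(n+1)/2}\,\mathbf{G}_+^s(t)$, where $\mathbf{G}_\pm^s(t) \coloneqq \frac{q^s}{m^s\alpha^s}\mathbf{E}_{\mathrm{conv}}(t) \pm \alpha^s\mathbf{D}$, with $\mathbf{D}$ and $\mathbf{E}_{\mathrm{conv}}(t)$ as in Eqns.~\eqref{definiton-D-derivative} and~\eqref{definition-E-convolve}.

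Next I would verify the two building blocks under the conjugate transpose. First, $\mathbf{D}$ is skew-Hermitian, $\mathbf{D}^* = -\mathbf{D}$, since it is purely imaginary and diagonal. Second, $\mathbf{E}_{\mathrm{conv}}(t)$ is Hermitian, $\mathbf{E}_{\mathrm{conv}}(t)^* = \mathbf{E}_{\mathrm{conv}}(t)$: because $E(x,t)$ is real-valued its Fourier coefficients satisfy $E_{-k}(t) = \overline{E_k(t)}$, and then $(\mathbf{E}_{\mathrm{conv}})_{jk} = E_{j-k} = \overline{E_{k-j}} = \overline{(\mathbf{E}_{\mathrm{conv}})_{kj}}$. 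Combining these gives the key identity $\mathbf{G}_+^s(t)^* = \mathbf{G}_-^s(t)$. Applied blockwise, the $(n,n+1)$ block $-\sqrt{(n+1)/2}\,\mathbf{G}_+^s(t)$ has conjugate transpose $-\sqrt{(n+1)/2}\,\mathbf{G}_-^s(t)$, which is exactly the negative of the $(n+1,n)$ block $\sqrt{(n+1)/2}\,\mathbf{G}_-^s(t)$. The diagonal blocks $-u^s\mathbf{D}$ are obviously skew-Hermitian. Thus $\mathbf{A}_F^s(t)^* = -\mathbf{A}_F^s(t)$, i.e., $\mathbf{A}_F^s(t)$ is skew-Hermitian, which is the appropriate complex-inner-product analog of Definition~\ref{def:anti-symmetric}: it satisfies $\langle \mathbf{A}_F^s y, z\rangle = -\langle y, \mathbf{A}_F^s z\rangle$ for all $y,z \in \mathbb{C}^{N_v(2N_x+1)}$ and $\langle \mathbf{A}_F^s y, y\rangle$ is purely imaginary (in particular $\mathrm{Re}\,\langle \mathbf{A}_F^s y,y\rangle = 0$, so $\|y\|^2$ is conserved).

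Finally, since the truncated Fourier expansion provides an isometric embedding of $\mathcal{W}^{N_v}$ into $\mathbb{C}^{N_v(2N_x+1)}$ (up to the normalization fixed by Eq.~\eqref{orthogonal-property-fourier}), the skew-Hermiticity of $\mathbf{A}_F^s(t)$ is exactly the discrete statement that the anti-symmetry of $(\mathcal{W}^{N_v}, A_v^s(x,t))$ established in Theorem~\ref{thm:theorem-velocity-antisymmetry} carries over to the fully semi-discrete setting, which in turn preserves the anti-symmetry of $(\mathcal{V}, A^s(x,v,t))$ from Eq.~\eqref{vlasov-continuum-anti-symmetric-form}. I do not expect any genuine analytic obstacle; the only subtlety is the bookkeeping of conjugation versus transposition when translating the real skew-symmetry of the centered finite-difference stencil $\mathbf{D}^p$ into the skew-Hermiticity that arises naturally for the purely imaginary Fourier differentiation matrix $\mathbf{D}$, and ensuring that closure-by-truncation—as in Theorem~\ref{thm:theorem-velocity-antisymmetry}—is still what enforces cancellation of the last off-diagonal coupling at row $n = N_v-1$.
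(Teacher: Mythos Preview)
Your proposal is correct and follows essentially the same approach as the paper's proof: assemble the Fourier coefficients into a block vector, identify the block-tridiagonal operator $\mathbf{A}_F^s(t)$ with diagonal blocks $-u^s\mathbf{D}$ and off-diagonal blocks $\pm\sqrt{\cdot}\,\mathbf{G}_\pm^s(t)$, and conclude skew-Hermiticity from $\mathbf{D}^*=-\mathbf{D}$ and $\mathbf{E}_{\mathrm{conv}}(t)^*=\mathbf{E}_{\mathrm{conv}}(t)$. If anything, you are more explicit than the paper in justifying the Hermiticity of $\mathbf{E}_{\mathrm{conv}}(t)$ via the conjugate symmetry $E_{-k}=\overline{E_k}$ of the Fourier coefficients of a real field, and in writing out the key intermediate identity $\mathbf{G}_+^s(t)^*=\mathbf{G}_-^s(t)$.
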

\begin{proof}
The semi-discrete Vlasov equation~\eqref{f-vlasov-ode-fourier} can be written in vector form as 
\begin{equation*}
    \frac{\mathrm{d}}{\mathrm{d}t} \mathbf{\Psi}^{s}(t) = \mathbf{A}^{s}(t) \mathbf{\Psi}^{s}(t)
\end{equation*}
where 
\begin{equation*}
    \mathbf{\Psi}^{s}(t) = \begin{bmatrix}
        \mathbf{C}^{s}_{0}(t)\\
        \mathbf{C}^{s}_{1}(t)\\
        \vdots \\
        \mathbf{C}^{s}_{N_{v}-1}(t)
    \end{bmatrix} \in  \mathbb{C}^{N_{v}(2N_{x}+1)}
\end{equation*}
and
\begin{equation*}
    \mathbf{A}^{s}(t) = \begin{bmatrix}
    -u^{s} \mathbf{D}&  -\sqrt{\frac{1}{2}}\mathbf{G}_{+}^{s}(t) & 0 &  \ldots & 0 \\
    \sqrt{\frac{1}{2}} \mathbf{G}_{-}^{s}(t) & -u^{s} \mathbf{D} & -\sqrt{\frac{2}{2}} \mathbf{G}_{+}^{s}(t) &  \ldots & 0\\
    & &  & & \\
    & \ddots & \ddots &  \ddots&   \\
    & &  & & \\
    0 & 0 &  \sqrt{\frac{N_v-2}{2}} \mathbf{G}_{-}^{s}(t)& -u^{s} \mathbf{D} & -\sqrt{\frac{N_v-1}{2}} \mathbf{G}_{+}^{s}(t)   \\
    0 & 0 & 0 & \sqrt{\frac{N_v-1}{2}} \mathbf{G}_{-}^{s}(t)& -u^{s} \mathbf{D}
 \end{bmatrix} \in \mathbb{C}^{N_{v} (2N_{x}+1)\times N_{v}(2N_{x}+1)},
\end{equation*}
such that
\begin{equation*}
    \mathbf{G}_{+}^{s}(t) = \frac{q^{s}}{m^s \alpha^s} \mathbf{E}_{\mathrm{conv}}(t) + \alpha^{s} \mathbf{D} \qquad \mathrm{and} \qquad  \mathbf{G}_{-}^{s}(t) = \frac{q^{s}}{m^s \alpha^s} \mathbf{E}_{\mathrm{conv}}(t) - \alpha^{s}\mathbf{D}.
\end{equation*}
Since $(\mathbb{C}^{2N_{x}+1}, \mathbf{D}) = (\mathbb{C}^{2N_{x}+1}, -\mathbf{D}^{*})$ and $(\mathbb{C}^{2N_{x}+1}, \mathbf{E}_{\mathrm{conv}}(t)) = (\mathbb{C}^{2N_{x}+1}, \mathbf{E}_{\mathrm{conv}}(t)^{*})$, the discretized advection operator is anti-symmetric where $(\mathbb{C}^{N_{v}(2N_{x}+1)}, \mathbf{A}^{s}(t)) = (\mathbb{C}^{N_{v}(2N_{x}+1)}, -\mathbf{A}^{s}(t)^{*})$. 
\end{proof}

\section{Relating the SW and SW Square-Root Expansion Coefficients} \label{sec:Appendix-B}
We derive an analytic relation between the Hermite coefficients of the SW formulation~$C_{\mathrm{sw}, n}^{s}(x, t)$ and the SW square-root formulation~$C_{\mathrm{swsr}, n}^{s}(x, t)$, whereby
\begin{alignat*}{3}
    f^{s, N_{v}}(x, v, t) &=\sum_{n=0}^{N_{v}-1} C_{\mathrm{sw}, n}^{s}(x, t) \psi_{n}(\xi^{s}) \qquad &&\text{SW expansion, see Eq.}~\eqref{expansion-seperation-of-variables},\\
    \sqrt{f^{s, N_{v}}(x, v, t)} &= \sum_{n=0}^{N_{v}-1} C_{\mathrm{swsr}, n}^{s}(x, t) \psi_{n}(\xi^{s}) \qquad &&\text{SW square-root expansion, see Eq.}~\eqref{spectral-approximation-sqrt},
\end{alignat*}
where $\psi_{n}(\xi^{s})$, defined in Eq.~\eqref{hermite-basis-function-definition}, is the SW Hermite basis function of degree $n \in \mathbb{N}_{0}$. Thus, 
\begin{equation*}
    \sum_{n=0}^{N_{v}-1} C_{\mathrm{sw}, n}^{s}(x, t) \psi_{n}(\xi^{s}) = \left(\sum_{n=0}^{N_{v}-1} C_{\mathrm{swsr}, n}^{s}(x, t) \psi_{n}(\xi^{s}) \right)^2.
\end{equation*}
Multiplying the equation above by $\psi_{m}(\xi^{s})$ with $m \in \mathbb{N}_{0}$ and integrating with respect to the scaled velocity coordinate $\xi^{s}$, results in 
\begin{align}
    \sum_{n=0}^{N_{v}-1} C_{\mathrm{sw}, n}^{s}(x, t)&\underbrace{\int_{\mathbb{R}}\psi_{n}(\xi^{s}) \psi_{m}(\xi^{s}) \mathrm{d} \xi^{s}}_{=\delta_{n, m} \text{ from Eq.}~\eqref{orthogonal-property}} = \int_{\mathbb{R}} \left(\sum_{n=0}^{N_{v}-1} C_{\mathrm{swsr}, n}^{s}(x, t) \psi_{n}(\xi^{s}) \right)^2 \psi_{m}(\xi^s) \mathrm{d} \xi^{s}, \nonumber\\
    C_{\mathrm{sw}, m}^{s}(x, t) &= \sum_{n=0}^{N_{v}-1} \sum_{k=0}^{N_{v}-1} C_{\mathrm{swsr}, n}^{s}(x, t) C_{\mathrm{swsr}, k}^{s}(x, t) \underbrace{\int_{\mathbb{R}}  \psi_{n}(\xi^{s}) \psi_{k}(\xi^{s}) \psi_{m}(\xi^s) \mathrm{d} \xi^{s}}_{\coloneqq \mathcal{M}_{n, k, m}}. \label{triple-product-integral}
\end{align}
We employ the following three identities of the ``\textit{physicist}'' Hermite polynomials $\mathcal{H}_{n}(\xi^{s})$, defined in Eq.~\eqref{hermite-polynomials}, from~\cite[\S V]{magnus_1966_formulas} to derive a closed-form algebraic expression of the integral $\mathcal{M}_{n, k, m} \in \mathbb{R}$ from Eq.~\eqref{triple-product-integral}:
\begin{alignat}{3}
    &\mathrm{[linearization]} &&\mathcal{H}_{m}(\xi^{s}) \mathcal{H}_{n}(\xi^{s}) =\sum_{r=0}^{\mathrm{min}(m, n)} \mathcal{R}_{m, n, r} \mathcal{H}_{m+n-2r}(\xi^{s}), \label{linearization-identity}\\
    &\mathrm{[scaling]} \hspace{3.5 cm} &&\mathcal{H}_{n}(\lambda \xi^{s}) = \sum_{p=0}^{\lfloor \frac{n}{2} \rfloor} \mathcal{P}_{n, p}(\lambda) \mathcal{H}_{n-2p}(\xi^{s}), \qquad \text{with} \qquad \lambda \in \mathbb{R} \backslash \{0, 1\},\label{scaling-identity}\\
    &\mathrm{[orthogonality]} \qquad &&\int_{\mathbb{R}} \mathcal{H}_{n}(\xi^{s}) \mathcal{H}_{m}(\xi^{s}) \exp(-(\xi^{s})^2)\mathrm{d} \xi^{s} = \frac{\delta_{n, m}}{\gamma^{2}_{n}}, \label{orthogonality-identity-hermite-poly}
\end{alignat}
where 
\begin{equation*}
    \mathcal{R}_{m, n, r} \coloneqq 2^{r} r! \binom{m}{r} \binom{n}{r},\qquad \mathcal{P}_{n, p}(\lambda)\coloneqq \lambda^{n-2p}(\lambda^{2} -1)^{p} \binom{n}{2p} \frac{(2p)!}{p!},\qquad \text{and} \qquad \gamma_{n} \coloneqq (\sqrt{\pi} 2^{n} n!)^{-\frac{1}{2}}.
\end{equation*}
By the definition of the SW Hermite basis functions $\psi_{n}(\xi^{s})$ in Eq.~\eqref{hermite-basis-function-definition} and the three identities above~\eqref{linearization-identity}--\eqref{orthogonality-identity-hermite-poly}, the integral $\mathcal{M}_{n, k, m} \in \mathbb{R}$ from Eq.~\eqref{triple-product-integral} simplifies to 
\begin{alignat*}{3}
    \mathcal{M}_{n, k, m} &= \int_{\mathbb{R}}  \psi_{n}(\xi^{s}) \psi_{k}(\xi^{s}) \psi_{m}(\xi^s) \mathrm{d} \xi^{s} = \gamma_{n} \gamma_{k} \gamma_{m} \int_{\mathbb{R}} \underbrace{\mathcal{H}_{n}(\xi^{s}) \mathcal{H}_{k}(\xi^{s})}_{\text{linearization}} \mathcal{H}_{m}(\xi^{s}) \exp\left(-\frac{3 (\xi^{s})^2}{2}\right) \mathrm{d} \xi^{s} \\
    &= \gamma_{n} \gamma_{k} \gamma_{m} \sum_{r=0}^{\mathrm{min}(k, n)} \mathcal{R}_{k, n, r} \int_{\mathbb{R}} \mathcal{H}_{k+n - 2r}(\xi^{s})\mathcal{H}_{m}(\xi^{s}) \exp\left(-\frac{3 (\xi^{s})^2}{2}\right) \mathrm{d} \xi^{s} \Hquad &&\text{from }~\eqref{linearization-identity}\\
    &= \sqrt{\frac{2}{3}} \gamma_{n} \gamma_{k} \gamma_{m} \sum_{r=0}^{\mathrm{min}(k, n)} \mathcal{R}_{k, n, r} \int_{\mathbb{R}} \underbrace{\mathcal{H}_{k+n - 2r}\left(\sqrt{\frac{2}{3}} y\right)}_{\text{scaling}}\underbrace{\mathcal{H}_{m}\left(\sqrt{\frac{2}{3}} y\right)}_{\text{scaling}}\exp\left(-y^2\right) \mathrm{d} y \Hquad &&\text{s.t. }y\coloneqq\sqrt{\frac{3}{2}}\xi^{s}\\
    &=\sqrt{\frac{2}{3}} \gamma_{n} \gamma_{k} \gamma_{m} \sum_{r=0}^{\mathrm{min}(k, n)} \mathcal{R}_{k, n, r}\sum_{q=0}^{\lfloor \frac{k+n - 2r}{2}\rfloor} \mathcal{P}_{k+n - 2r, q}\left(\sqrt{\frac{2}{3}}\right) \sum_{l=0}^{\lfloor \frac{m}{2}\rfloor} \mathcal{P}_{m, l}\left(\sqrt{\frac{2}{3}}\right) \Hquad &&\text{from }~\eqref{scaling-identity}\\
    &\qquad\qquad\cdot\underbrace{\int_{\mathbb{R}} \mathcal{H}_{k+n - 2r - 2q}\left(y\right) \mathcal{H}_{m - 2l}\left(y\right) \exp\left(-y^2\right) \mathrm{d} y}_{\text{orthogonality}}\\
    &= \sqrt{\frac{2}{3}} \gamma_{n} \gamma_{k} \gamma_{m} \sum_{r=0}^{\mathrm{min}(k, n)} \mathcal{R}_{k, n, r}\sum_{q=0}^{\lfloor \frac{k+n - 2r}{2}\rfloor} \mathcal{P}_{k+n - 2r, q}\left(\sqrt{\frac{2}{3}}\right) \sum_{l=0}^{\lfloor \frac{m}{2}\rfloor} \mathcal{P}_{m, l}\left(\sqrt{\frac{2}{3}}\right) \frac{\delta_{k+n-2r-2q, m-2l}}{\gamma^{2}_{m-2l}} \Hquad &&\text{from }~\eqref{orthogonality-identity-hermite-poly}.
\end{alignat*}
Therefore, 
\begin{align*}
    C_{\mathrm{sw}, m}^{s}(x, t) &= \sum_{n=0}^{N_{v}-1} \sum_{k=0}^{N_{v}-1} C_{\mathrm{swsr}, n}^{s}(x, t) C_{\mathrm{swsr}, k}^{s}(x, t) \sqrt{\frac{2}{3}} \gamma_{n} \gamma_{k} \gamma_{m} \sum_{r=0}^{\mathrm{min}(k, n)} \mathcal{R}_{k, n, r}\sum_{q=0}^{\lfloor \frac{k+n - 2r}{2}\rfloor} \mathcal{P}_{k+n - 2r, q}\left(\sqrt{\frac{2}{3}}\right) \\
    &\qquad\qquad\qquad\cdot\sum_{l=0}^{\lfloor \frac{m}{2}\rfloor} \mathcal{P}_{m, l}\left(\sqrt{\frac{2}{3}}\right) \frac{\delta_{k+n-2r-2q, m-2l}}{\gamma^{2}_{m-2l}}.
\end{align*}

\section*{Code Availability}
The public repository~\url{https://github.com/opaliss/Antisymmetric-Vlasov-Poisson} contains a collection of Jupyter notebooks in Python~3.9 with the code used in this study.

\section*{Acknowledgement} \label{sec:acknowledgement}
O.K. and G.L.D. are grateful for the useful and informative discussions with Cecilia Pagliantini, Gianmarco Manzini, and Daniel Livescu.
O.I. was partially supported by the Los Alamos National Laboratory Vela Fellowship. O.I. and B.K. were partially supported by the National Science Foundation under Award 2028125 for ``SWQU: Composable Next Generation Software Framework for Space Weather Data Assimilation and Uncertainty Quantification''. 
O.K. and G.L.D. were supported by the Laboratory Directed Research and Development Program of Los Alamos National Laboratory under project number 20220104DR. Los Alamos National Laboratory is operated by Triad National Security, LLC, for the National Nuclear Security Administration of the U.S. Department of Energy (Contract No. 89233218CNA000001).
F.D.H. was supported by the U.S. Department of Energy, Office of Science, Office of Fusion Energy Sciences, Theory Program, under Award No. DE-FG02-95ER54309. 
\bibliographystyle{abbrv}
\bibliography{references}

\end{document}